\title[Categorical DT/PT correspondence]{The local categorical DT/PT correspondence}
\author{Tudor P\u adurariu and Yukinobu Toda}
\newtheorem{thm}{Theorem}[section]
\newtheorem{cor}[thm]{Corollary}
\newtheorem{prop}[thm]{Proposition}
\newtheorem{lemma}[thm]{Lemma}
\theoremstyle{definition}
\newtheorem{thm*}[thm]{Theorem$^*$}
\newtheorem{remark}[thm]{Remark}
\newtheorem{step}{Step}
\newcommand{\relmiddle}[1]{\mathrel{}\middle#1\mathrel{}}
\newcommand{\comment}[1]{}
\renewcommand{\leq}{\leqslant}
\renewcommand{\geq}{\geqslant}
\newcommand{\OO}{\mathcal{O}}
\newcommand{\X}{\mathcal{X}}
\newcommand{\Coh}{\operatorname{Coh}}
\newcommand{\Ker}{\operatorname{Ker}}
\newcommand{\id}{\operatorname{id}}
\newcommand{\Ext}{\operatorname{Ext}}
\newcommand{\Hom}{\operatorname{Hom}}
\newcommand{\ch}{\operatorname{ch}}
\newcommand{\dd}{\underline{d}}
\newcommand{\Tr}{\mathop{\mathrm{Tr}}\nolimits}
\newcommand{\ssslash}{/\!\!/}
\tikzstyle{block}=[draw=black, width=1cm, minimum height=2cm, align=center] 
\tikzstyle{block2}=[draw=black, text width=2cm, minimum height=1cm, align=center] 
\tikzstyle{block3}=[draw=black, text width=2cm, minimum height=1cm, align=center] 
\begin{document}
\maketitle
\begin{abstract}
In this paper, we prove the categorical wall-crossing formula for certain quivers containing the three loop 
quiver, which we call DT/PT quivers. 
These quivers appear as Ext-quivers for the wall-crossing of DT/PT moduli spaces on Calabi-Yau 3-folds. 
The resulting formula is a semiorthogonal decomposition which involves quasi-BPS 
categories studied in our previous papers, and we regard it as a categorical analogue of the numerical DT/PT 
correspondence. 
As an application, we prove a categorical DT/PT correspondence for sheaves supported on reduced plane curves in the affine three dimensional space.   
\end{abstract}

\section{Introduction}
The purpose of this paper is to give a 
categorical wall-crossing formula 
for certain quivers, called \textit{DT/PT quivers}. 
Our main result is motivated 
by our pursuit of categorifying the DT/PT correspondence 
for curve counting theories on Calabi-Yau 3-folds, 
see~Subsection~\ref{sub:dtpt} for its review. 
As we will explain in Subsection~\ref{subdtpt2}, 
the DT/PT quivers appear as 
Ext-quivers of wall-crossing of DT/PT moduli 
spaces on Calabi-Yau 3-folds. We regard the 
resulting categorical wall-crossing formula as a local categorical analogue of the
DT/PT correspondence. 
We 
apply it to obtain a categorical (and K-theoretic) DT/PT correspondence in a geometric example, for sheaves supported on reduced plane curves in the affine three dimensional space. 
The main result 
of this paper will be applied in~\cite{PT2} 
to give a categorical DT/PT wall-crossing 
for reduced curve 
classes on local surfaces.

\subsection{The categorical wall-crossing formula for DT/PT quivers}\label{localquivers}
For $a\in \mathbb{N}$, 
we define the DT/PT quiver 
$Q^{af}$ to be the quiver with two vertices $0$ and $1$, three loops at $1$, $a+1$ edges from $0$ to $1$, and $a$ edges from $1$ to $0$, see Figure~1 for the picture of $Q^{2f}$. 
\begin{figure}[H]
\begin{align*}
	Q^{2f}
	\begin{tikzpicture}			
			\draw[->_>_>, >={Latex[round]}] 	
			(-4, 0) to [bend left=30] (0, 0);
					\draw[->_>, >={Latex[round]}] 	
				(0, 0) to [bend left=30] (-4, 0);
				\draw[->, >={Latex[round]}] (0, 0) arc (-180:0:0.4) ;
		\draw (0.8, 0) arc (0:180:0.4);
		\draw[->, >={Latex[round]}] (0, 0) arc (-180:0:0.6) ;
		\draw (1.2, 0) arc (0:180:0.6);
		\draw[->, >={Latex[round]}] (0, 0) arc (-180:0:0.8);
		\draw (1.6, 0) arc (0:180:0.8);
		\draw[fill=black] (0, 0) circle (0.05);
		\draw[fill=black] (-4, 0) circle (0.05);
	\end{tikzpicture}
	\end{align*}
	\caption{DT/PT quiver $Q^{af}$ for $a=2$}
	\end{figure}
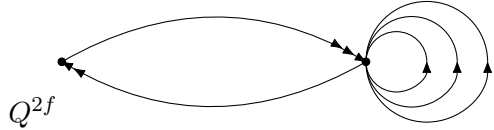
We denote by $R^{af}(1, d)$ the affine space of representations of $Q^{af}$ of dimension $(1, d)$.
Let $\chi_0 \colon GL(V) \to \mathbb{C}^{\ast}$ be the determinant character
$g \mapsto \det g$. Consider the GIT semistable stacks, which are smooth 
quasi-projective varieties: 
\begin{align*}
    I^a(d)&:=R^{af}(1, d)^{\chi_0\text{-ss}}/GL(d),\\
    P^a(d)&:=R^{af}(1, d)^{\chi^{-1}_0\text{-ss}}/GL(d).
\end{align*}
As we explain in Subsection~\ref{subdtpt2}, these moduli spaces together 
with some super-potentials give local models of the DT/PT moduli spaces on 
Calabi-Yau 3-folds. 
The following is the main result
of this paper: 
\begin{thm}\label{thm2bis}
    Let $\mu\in\mathbb{R}$
    such that $2\mu l \notin \mathbb{Z}$
    for $1\leq l \leq d$. 
    There is a semiorthogonal decomposition 
    \begin{equation}\label{SOD11}
    D^b(I^a(d))=\Big\langle  \left( \boxtimes_{i=1}^k \mathbb{M}(d_i)_{w_i}\right)\boxtimes D^b(P^a(d'))\Big\rangle.
   \end{equation}
    The right hand side is after all $d'\leq d$, partitions $(d_i)_{i=1}^k$ of $d-d'$, and integers $(w_i)_{i=1}^k$ such that for $v_i:=w_i+d_i\left(d'+\sum_{j>i} d_j-\sum_{j<i}d_j\right)$, we have
    \begin{equation}\label{boundsalpha}
    -1-\mu-\frac{a}{2}< \frac{v_1}{d_1}<\cdots<\frac{v_k}{d_k}<-\mu-\frac{a}{2}.
\end{equation}
    The order on the categories is discussed in Subsection \ref{comparison}, see also Subsection \ref{sod}.
\end{thm}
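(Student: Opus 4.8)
The plan is to prove Theorem~\ref{thm2bis} by realizing $I^{a}(d)$ and $P^{a}(d)$ as the two GIT quotients of the affine space $R^{af}(1,d)$ by $G:=\GL(d)$ for the opposite characters $\pm\chi_{0}$, and then running the theory of windows and $\Theta$-stratifications for this variation of GIT. Write $\mathfrak{X}_{d}:=[R^{af}(1,d)/G]$, so that $I^{a}(d)$ and $P^{a}(d)$ are the open substacks of $\chi_{0}$-, resp.\ $\chi_{0}^{-1}$-semistable points, and the relevant wall is the origin of $\Hom(G,\mathbb{C}^{\ast})\otimes\mathbb{R}\cong\mathbb{R}$. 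The real number $\mu$ is used to fix a window subcategory $\mathbb{W}_{\mu}\subset D^{b}(\mathfrak{X}_{d})$, and the genericity $2\mu l\notin\mathbb{Z}$ for $1\le l\le d$ ensures that no $\lambda$-weight entering the construction is on the boundary of $\mathbb{W}_{\mu}$; this is what makes the restriction $\mathbb{W}_{\mu}\xrightarrow{\sim}D^{b}(I^{a}(d))$ an equivalence.

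First I would describe the $\Theta$-stratification of $\mathfrak{X}_{d}$ coming from $\chi_{0}^{-1}$-instability. Its strata are indexed by a \emph{center} dimension $d'\le d$ together with an ordered partition $(d_{i})_{i=1}^{k}$ of $d-d'$; the cocharacter $\lambda$ of such a stratum places the $d_{i}$-blocks at strictly decreasing positive weights and the $d'$-block at the critical weight, so that passing to the $\lambda$-limit kills all framing maps $0\to 1$ except those landing in the weight $0$ block. Hence the center $Z_{\lambda}$ of the stratum is identified, compatibly with the $\lambda$-grading, with
\[
P^{a}(d')\ \times\ \prod_{i=1}^{k}\bigl[R^{3l}(d_{i})/\GL(d_{i})\bigr],
\]
where $R^{3l}(d_{i})=\mathfrak{gl}(d_{i})^{\oplus 3}$ is the representation space of the three-loop quiver of dimension $d_{i}$; the $\chi_{0}^{-1}$-stability of the $d'$-factor is exactly what produces $P^{a}(d')$ rather than $I^{a}(d')$. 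A direct weight count of the $\lambda$-action on the conormal bundle of the stratum — in which the $a+1$ arrows $0\to 1$ and the $a$ arrows $1\to 0$ contribute the term $a/2$, and the pairings of the $i$-th block with the center and with the remaining blocks contribute $d_{i}\bigl(d'+\sum_{j>i}d_{j}-\sum_{j<i}d_{j}\bigr)$ — pins down the weight windows on each factor of $Z_{\lambda}$.

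Next I would feed this into the semiorthogonal decomposition attached to a $\Theta$-stratification, $D^{b}(\mathfrak{X}_{d})=\bigl\langle\,\{D^{b}(Z_{\lambda})_{w}\}\,,\,D^{b}(P^{a}(d))\,\bigr\rangle$, where $D^{b}(Z_{\lambda})_{w}$ ranges over the integral weight windows $w$ on the factors of $Z_{\lambda}$. By the very definition of the quasi-BPS category, the weight-$w_{i}$ window subcategory of $D^{b}([R^{3l}(d_{i})/\GL(d_{i})])$ — after the recentering dictated by the correction term above — is $\mathbb{M}(d_{i})_{w_{i}}$, so that $D^{b}(Z_{\lambda})_{w}=\bigl(\boxtimes_{i=1}^{k}\mathbb{M}(d_{i})_{w_{i}}\bigr)\boxtimes D^{b}(P^{a}(d'))$ for the $w_{i}$ attached to $v_{i}$ by the stated formula. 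Intersecting this decomposition with the window $\mathbb{W}_{\mu}\simeq D^{b}(I^{a}(d))$, the surviving pieces are exactly those for which the normalized weights $v_{i}/d_{i}$ strictly interleave inside the length-$1$ interval $(-1-\mu-a/2,\ -\mu-a/2)$, which is \eqref{boundsalpha}, while $D^{b}(P^{a}(d))$ survives as the final piece because all of it lies in $\mathbb{W}_{\mu}$. The ordering of the pieces is inherited from the ordering of the $\Theta$-strata by the norm of their cocharacters, refined on each stratum by the slopes $v_{i}/d_{i}$; this is the order recorded in Subsections~\ref{comparison} and~\ref{sod}. One can also phrase the same argument inductively on $d$, splitting off the outermost block by an elementary wall-crossing and recursing on the partially stabilized moduli.

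The step I expect to be the main obstacle is showing that this decomposition is \emph{tight}: that the listed pieces are semiorthogonal in the stated order and generate, with nothing omitted and no overlap. This comes down to (i) a careful analysis of the $\lambda$-weights on the normal bundles to the strata, so that $\mathbb{W}_{\mu}$ restricts to an equivalence on the DT side while the subquotients produced by the $\chi_{0}^{-1}$-strata are exactly the recentered quasi-BPS windows, and (ii) checking that the genericity $2\mu l\notin\mathbb{Z}$ pushes each $v_{i}/d_{i}$ strictly inside the interval and keeps the windows on different blocks from colliding. Both are combinatorial once the stratification is set up, but carrying the twists in (i) across the nested blocks — and matching the correction term defining $v_{i}$ — is the technical heart of the argument.
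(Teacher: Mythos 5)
Your overall framing (two GIT chambers for $\pm\chi_0$, a window $\mathbb{W}_\mu\simeq D^b(I^a(d))$, genericity $2\mu l\notin\mathbb{Z}$ to avoid boundary weights, stratification by $\chi_0^{-1}$-instability with centers of the shape $\X(d_i)$-factors times $P^a(d')$) is the right ambient picture, and the paper does use \cite{halp} and the magic window theorem of \cite{hls} for the final identification of windows with $D^b(I^a(d))$ and $D^b(P^a(d))$. But there is a genuine gap at the step you yourself flag as the heart of the argument, and it is not merely technical. You assert that ``by the very definition of the quasi-BPS category, the weight-$w_i$ window subcategory of $D^b([\mathfrak{gl}(d_i)^{\oplus 3}/GL(d_i)])$ is $\mathbb{M}(d_i)_{w_i}$.'' This is false: $\mathbb{M}(d_i)_{w_i}$ is defined by the polytope condition $\chi+\rho\in\mathbf{W}(d_i)_{w_i}$ and is a proper subcategory of the full weight-$w_i$ part $D^b(\X(d_i))_{w_i}$; there is no stability parameter on $\X(d_i)$ for which it arises as a Halpern-Leistner window of a $\Theta$-stratification. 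Consequently, the semiorthogonal decomposition attached to the $\chi_0^{-1}$-stratification only produces pieces of the form (weight components of) $D^b(\X(e))\boxtimes D^b(P^a(d'))$, not Hall products of quasi-BPS categories, and ``intersecting'' that decomposition with $\mathbb{W}_\mu$ is not a legitimate operation without proof. The entire content of the paper's Theorem~\ref{thm2} is precisely to bridge this gap: one must (a) decompose the relevant window-type category $\mathbb{D}^a(1,d;\delta)$ into Hall products $\bigl(\boxtimes_i\mathbb{M}(d_i)_{w_i}\bigr)\boxtimes\mathbb{M}^a(1,d';\delta')$ with the interleaving condition \eqref{boundsalpha2}, which requires new polytope decompositions of $\mathbf{V}^a(1,d)$ (Propositions~\ref{prop35PT0} and~\ref{prop34}, with the uniqueness of the cut $e$ and the invariants $p(\chi)$, $e(\chi)$) — the paper explicitly notes that the decompositions used in \cite{P}, \cite{PT0}, \cite{MR4227868} do \emph{not} yield \eqref{SOD11}; and (b) prove generation (Proposition~\ref{generationprop}, by induction on $(-p(\chi),e(\chi))$ using Borel-Weil-Bott) and semiorthogonality in the stated order (Proposition~\ref{prop311}, via the weight comparison Proposition~\ref{prop37}). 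None of this is supplied or even sketched by your weight count on the conormal bundles, which only recovers the numerical shift $d_i(d'+\sum_{j>i}d_j-\sum_{j<i}d_j)$, not the categorical statement.

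A second, smaller omission: to identify the polytope-defined windows with $D^b(I^a(d))$ and $D^b(P^a(d))$ one cannot apply the magic window theorem directly, because $R^{af}(1,d)$ (with $a+1$ arrows one way and $a$ the other) is not a symmetric $GL(d)$-representation. The paper handles this in Proposition~\ref{prop313} by factoring through the affine bundle maps $\X^{a+1}(1,d)\to\X^{af}(1,d)\to\X^a(1,d)$, which restrict to affine bundles on the respective semistable loci, and then invoking \cite{hls} for the symmetric stacks $\X^{a+1}(1,d)$ and $\X^a(1,d)$. Your proposal silently assumes the window/magic-window comparison works on $R^{af}(1,d)$ itself. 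So while your plan identifies the correct scaffolding, the two steps it treats as automatic — the appearance of the quasi-BPS categories as the stratum pieces, and the window identification for the non-symmetric framed representation — are exactly where the paper's new arguments live, and as written the proposal does not prove the theorem.
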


In the above, the categories $\mathbb{M}(d)_w$ are certain (twisted) noncommutative resolutions of singularities of $\mathfrak{gl}(d)^{\oplus 3}\ssslash GL(d)$ introduced by \v{S}penko--Van den Bergh \cite{SVdB}.

We treated the case $a=0$ in \cite[Theorem 1.1]{PT0}, when $P^0(d)$ is empty for $d>0$ and is a point for $d=0$. We note that while the proof of \cite[Theorem 1.1]{PT0} is based on the constructions of \cite{P} (going back to \cite{MR4227868}) and on the use of decompositions of the weight space of $T(d)\cong (\mathbb{C}^*)^d$ in polytopes using the $(r, p)$-invariant of a weight introduced by \v{S}penko--Van den Bergh \cite{SVdB, MR4227868}, a similar argument does not produce the semiorthogonal decomposition \eqref{SOD11}.
The proof of Theorem \ref{thm2bis} is still based on decompositions of polytopes in the weight space of $T(d)$, but these decompositions are different from the ones considered in \cite[Corollary 8.5]{MR4227868}, \cite[Theorem 1.1]{P}, \cite[Theorem 1.1]{PT0}.

Using \cite[Theorem 1.1]{PT0}, we also obtain the following decomposition in K-theory, where $K$ denotes the Grothendieck group of a dg-category: 
\begin{cor}\label{thm12}
Let $d\in \mathbb{N}$. Then there is an isomorphism \begin{align*}
    K(I^a(d))\cong\bigoplus_{d'=0}^d 
    K(\mathrm{NHilb}(d-d')) \otimes
    K(P^a(d')). 
\end{align*}
\end{cor}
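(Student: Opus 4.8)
The plan is to pass to Grothendieck groups in the semiorthogonal decomposition \eqref{SOD11} and then to rewrite the sum over partitions using the case $a=0$ of Theorem~\ref{thm2bis}. Since the Grothendieck group of an (admissible) semiorthogonal decomposition is the direct sum of the Grothendieck groups of its pieces, Theorem~\ref{thm2bis} gives
\[
K(I^a(d))\cong\bigoplus_{d'\leq d}\ \bigoplus_{(d_i),(w_i)}K\!\left(\Big(\boxtimes_{i=1}^k\mathbb{M}(d_i)_{w_i}\Big)\boxtimes D^b(P^a(d'))\right),
\]
where the inner sum runs over partitions $(d_i)_{i=1}^k$ of $d-d'$ and integers $(w_i)_{i=1}^k$ satisfying \eqref{boundsalpha}. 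Applying the Künneth formula for K-theory to split off the factor $K(P^a(d'))$ from each piece, and using that tensoring by $K(P^a(d'))$ commutes with the inner direct sum, the Corollary is reduced to the identity
\[
\bigoplus_{(d_i),(w_i)}K\!\Big(\boxtimes_{i=1}^k\mathbb{M}(d_i)_{w_i}\Big)\ \cong\ K(\mathrm{NHilb}(n)),\qquad n:=d-d',
\]
the sum being over partitions of $n$ and weights satisfying \eqref{boundsalpha}.

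To prove this identity I would invoke \cite[Theorem~1.1]{PT0}, which is exactly Theorem~\ref{thm2bis} in the case $a=0$: in that case $P^0$ forces $d'=0$, one has $\mathrm{NHilb}(n)=I^0(n)$, and taking Grothendieck groups of the decomposition yields $K(\mathrm{NHilb}(n))\cong\bigoplus_{(d_i),(w_i)}K(\boxtimes_i\mathbb{M}(d_i)_{w_i})$, the sum running over partitions $(d_i)$ of $n$ and integers $(w_i)$ with
\[
-1-\nu<\frac{v_1'}{d_1}<\cdots<\frac{v_k'}{d_k}<-\nu,\qquad v_i':=w_i+d_i\Big(\sum_{j>i}d_j-\sum_{j<i}d_j\Big),
\]
for any parameter $\nu$ with $2\nu l\notin\mathbb{Z}$ for $1\le l\le n$. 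I would apply this with $\nu:=\mu+\tfrac a2+d'$, which is admissible in dimension $n$ since $2\nu l=2\mu l+(a+2d')l\notin\mathbb{Z}$ for $1\le l\le n\le d$. Because $v_i'=v_i-d_id'$, where $v_i$ is the quantity of Theorem~\ref{thm2bis}, dividing by $d_i$ and subtracting $d'$ transforms the inequalities \eqref{boundsalpha} into precisely the inequalities above for this $\nu$, while leaving each $w_i$---hence each category $\mathbb{M}(d_i)_{w_i}$---untouched. Thus the two index sets coincide under the identity map, which proves the identity and hence the Corollary.

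The only ingredient that is not a formal consequence of the existence of the semiorthogonal decompositions is the Künneth isomorphism $K\big(\mathcal A\boxtimes D^b(P^a(d'))\big)\cong K(\mathcal A)\otimes K(P^a(d'))$, and this is the step I expect to be the main obstacle. I would obtain it from the fact that $\mathrm{NHilb}(n)$ and $P^a(d')$ admit torus actions with affine Bia{\l}ynicki--Birula decompositions---so that their Grothendieck groups are free abelian and the external product map is an isomorphism---together with the compatibility of the subcategories $\mathbb{M}(d)_w$ with such decompositions established in the earlier work on quasi-BPS categories; alternatively, one may appeal directly to a Künneth formula for the Grothendieck groups of the relevant smooth dg-categories.
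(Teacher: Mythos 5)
Your reduction of the combinatorics is fine: taking $K$ of \eqref{SOD11}, and matching the index set of \eqref{boundsalpha} with the index set of the $a=0$ decomposition of $D^b(\mathrm{NHilb}(d-d'))$ from \cite[Theorem~1.1]{PT0} via the shifted parameter $\nu=\mu+\tfrac a2+d'$ (so $v_i'=v_i-d_id'$) is exactly the bookkeeping the paper also performs, and your check that $2\nu l\notin\mathbb{Z}$ is correct. The gap is in the Künneth step, and it is not a minor one, because of where you place it. You apply Künneth to each individual summand, i.e.\ you need $K\bigl((\boxtimes_{i=1}^k\mathbb{M}(d_i)_{w_i})\boxtimes D^b(P^a(d'))\bigr)\cong K(\boxtimes_i\mathbb{M}(d_i)_{w_i})\otimes K(P^a(d'))$. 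Neither of your proposed justifications reaches this statement: there is no general Künneth formula for Grothendieck groups of dg-categories (it already fails for $D^b(X\times Y)$ with $X,Y$ smooth projective varieties), and an affine Bia{\l}ynicki--Birula/cellular argument, even if available for $\mathrm{NHilb}$ and $P^a(d')$, says nothing about the categories $\mathbb{M}(d_i)_{w_i}$, which are noncommutative-resolution-type subcategories of $D^b(\mathfrak{gl}(d_i)^{\oplus 3}/GL(d_i))$ and not derived categories of cellular varieties; no ``compatibility of $\mathbb{M}(d)_w$ with such decompositions'' is established in the cited literature. So as written the key analytic input of your proof is unsupported.

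The paper orders the two steps the other way precisely to avoid this. It first tensors the decomposition \eqref{SODdprime} of $D^b(\mathrm{NHilb}(d-d'))$ with $D^b(P^a(d'))$ to get a semiorthogonal decomposition of $D^b(\mathrm{NHilb}(d-d')\times P^a(d'))$ whose pieces are literally the same categories $(\boxtimes_i\mathbb{M}(d_i)_{w_i})\boxtimes D^b(P^a(d'))$ appearing in \eqref{SOD11}; summing K-groups then gives $K(I^a(d))\cong\bigoplus_{d'}K(\mathrm{NHilb}(d-d')\times P^a(d'))$ with no Künneth at all. Only one geometric Künneth isomorphism, $K(\mathrm{NHilb}(d-d')\times P^a(d'))\cong K(\mathrm{NHilb}(d-d'))\otimes K(P^a(d'))$, remains, and the paper proves it by realizing $D^b(\mathrm{NHilb}(d-d'))$ and $D^b(P^a(d'))$ as window subcategories (via \cite[Theorem~2.10]{halp}) of $D^b(\X^{f}(1,d-d'))$ and $D^b(\X^{af}(1,d'))$, whose K-groups are representation rings $K_{GL}(\mathrm{pt})$, for which Künneth is immediate; the window categories are K-theoretic summands, so the isomorphism descends. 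If you want to keep your order of operations, you would in effect have to prove the summand-level Künneth by exactly this kind of argument (or deduce it a posteriori from the grouped statement), so you should either restructure your proof as above or supply a genuine proof of the Künneth isomorphism you invoke.
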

Compare the above isomorphism with  \eqref{DTPT}. 
We prove Theorem \ref{thm2bis} and Corollary \ref{thm12} in Section \ref{sec:DT/PTlocal}.

\subsection{The DT/PT correspondence 
for Calabi-Yau 3-folds}\label{sub:dtpt}
We now review the DT/PT correspondence for 
Calabi-Yau 3-folds. 
For a smooth Calabi-Yau 3-fold $X$, let $\beta \in H_2(X, \mathbb{Z})$ and 
$n \in \mathbb{Z}$. 
The moduli space which defines the 
(rank one) Donaldson-Thomas (DT) invariant $I_{n, \beta}$ is the 
classical Hilbert scheme
\begin{align*}
	I_X(\beta, n)
	\end{align*}
which parametrizes closed subschemes $C \subset X$ with 
$\dim C \leq 1$
and $([C], \chi(\mathcal{O}_C))=(\beta, n)\in H_2(X, \mathbb{Z})\oplus \mathbb{Z}.$
The rank one DT invariant 
is defined by 
\begin{align*}
    \mathrm{DT}_{\beta, n}=\int_{I_X(\beta, n)}
    \chi_B \ de,
\end{align*}
where $\chi_B$ is the Behrend constructible 
function \cite{Beh}.
The DT invariants are
related to Gromov-Witten invariants by the 
Maulik-Nekrasov-Okounkov-Pandharipande (MNOP) 
conjecture \cite{MNOP}. 

In~\cite{MR2545686}, Pandharipande--Thomas 
defined virtual counts of stable pairs on a smooth complex 3-fold, which we call PT invariants. In the Calabi-Yau case, PT invariants provide a geometric construction of the contribution of DT invariants in the statement of the MNOP conjecture.
The moduli space of PT stable pairs 
is denoted by 
\begin{align*}
	P_X(\beta, n)
	\end{align*}
and it 
parametrizes pairs $(F, s)$, where $F$ is 
a pure one-dimensional coherent sheaf on $X$ 
with $([F], \chi(F))=(\beta, n)\in H_2(X, \mathbb{Z})\oplus \mathbb{Z}$ and 
$s \colon \mathcal{O}_X \to F$ is a section 
whose cokernel is at most zero-dimensional. 
The PT invariant is 
defined by 
\begin{align*}
  \mathrm{PT}_{\beta, n}=\int_{P_X(\beta, n)}
    \chi_B \ de. 
\end{align*}
 The DT/PT correspondence 
is the formula 
\begin{equation}\label{DTPT}
    \text{DT}_{\beta, n}=\sum_{k\geq 0}\text{DT}_{0, k}\cdot\text{PT}_{\beta, n-k}.
\end{equation}

The above formula was conjectured in~\cite{MR2545686}, 
its $\chi_B$-unweighted version 
was proved in~\cite{MR2669709, MR2869309}
using Joyce's motivic Hall algebra machinery~\cite{Joy1, Joy2, Joy3, Joy4}, and its version incorporating the Behrend function was proved in~\cite{Br} (see also~\cite{Thall})
using the work of 
Joyce-Song~\cite{JS}
(see also~\cite{K-S}).

There are several refinements of DT invariants in the Calabi-Yau case: motivic, cohomological, K-theoretic, or categorical, see \cite{Sz, T, PT0}. The motivic DT/PT correspondence was established in local cases by Davison--Ricolfi \cite{MR4332481}. It is an interesting problem to study the cohomological DT/PT correspondence for quivers with potential (following \cite{DM}), or for global geometries.

In~\cite[Subsection 1.3, Conjecture 1.3.2]{T}, the second author 
proposed the existence of dg-categories
 $\mathcal{DT}_X(\beta, n)$ and
 $\mathcal{PT}_X(\beta, n)$
which categorify DT and PT invariants, and 
the existence of a fully-faithful functor 
\begin{align}\label{cat:DTPT}
	\mathcal{PT}_X(\beta, n)\hookrightarrow 
\mathcal{DT}_X(\beta, n).
\end{align}
We do not know how to construct such categories for a general Calabi-Yau $3$-fold.
In~\cite[Sections 3 and 4]{T}, the second author constructed such dg-categories 
in the case of local surfaces 
$X=\text{Tot}_S(\omega_S)$ (for $S$ a smooth surface), and proved the 
existence of a fully-faithful functor (\ref{cat:DTPT}) when 
$\beta$ is a reduced curve class \cite[Theorem 5.5.5]{T}. 
It is natural to try to obtain a finer statement than \eqref{cat:DTPT} such as a semiorthogonal 
decomposition of $\mathcal{DT}_X(\beta, n)$ which categorifies the numerical DT/PT correspondence \eqref{DTPT}. 

As we explain in the next subsection, 
the motivation of the main result in this paper 
is to establish such a categorical DT/PT wall-crossing 
for certain local models of the (still to be defined) dg-categories in (\ref{cat:DTPT}) via Ext-quivers.


\subsection{DT/PT wall-crossing and Ext-quivers}\label{subdtpt2}
We now explain a construction of local models of the dg-categories (\ref{cat:DTPT})
in terms of Ext-quivers associated with a wall-crossing diagram. 
For a collection of objects $\{E_1, \ldots, E_k\}$, 
the associated Ext-quiver has set of vertices $\{1, \ldots, k\}$
and the number of arrows from $i$ to $j$ is 
$\dim \Ext^1(E_i, E_j)$. 

The two moduli spaces $I_X(\beta, n)$ and $P_X(\beta, n)$ are related 
by wall-crossing in the derived category of $X$ by 
regarding them as moduli spaces of certain two-term complexes. 
We have open immersions 
\begin{align*}
	I_X(\beta, n) \subset \mathcal{T}_X(\beta, n)
	\supset P_X(\beta, n),
	\end{align*}
where 
	 $\mathcal{T}_X(\beta, n)$ is the moduli stack of 
pairs $(F, s)$ such that $F$ is a (not necessary pure) one-dimensional 
sheaf and $s \colon \mathcal{O}_X \to F$ is a section 
with at most zero-dimensional cokernel. 
The stack $\mathcal{T}_X(\beta, n)$ is the moduli 
stack of semistable objects on the DT/PT wall. 
Consider its good moduli space 
\begin{align}\label{intro:good}
	\pi \colon 
\mathcal{T}_X(\beta, n) \to T_X(\beta, n).
\end{align}
We have the following wall-crossing diagram 
\begin{align*}
	\xymatrix{
I_X(\beta, n) \ar[dr]_-{\pi^{+}} & & P_X(\beta, n) \ar[ld]^-{\pi^-} \\
& T_X(\beta, n), &	
}
	\end{align*}
which is an example of a d-critical flip as defined in~\cite{Toddbir}. 

A closed point $p \in T_X(\beta, n)$ corresponds to a 
direct sum 
\begin{align*}
	I_C \oplus \bigoplus_{j=1}^m V^{(j)}\otimes \mathcal{O}_{x^{(j)}}[-1]
	\end{align*}
where $C \subset X$ is a Cohen-Macaulay curve 
and $x^{(1)}, \ldots, x^{(m)}$ are distinct points in $X$. 
The formal fiber $\widehat{\mathcal{T}}_X(\beta, n)_p$
of the morphism (\ref{intro:good}) at $p$ 
is described in terms of the Ext-quiver $Q_p$
associated with 
the collection 
\begin{align*}
	\{I_C, \mathcal{O}_{x^{(1)}}[-1], \ldots, \mathcal{O}_{x^{(m)}}[-1]\}.
	\end{align*}
The Ext-quiver $Q_p$ is given by gluing the quivers $Q^{a^{(j)}f}$ at
the vertex $0$, and adding $N$-loops 
at $0$, 
where $a^{(j)}=\mathrm{ext}^1(I_C, \mathcal{O}_{x^{(j)}})$
and $N=\mathrm{ext}^1(I_C, I_C)$, see \cite[Proposition 5.5.2]{T}. 
The moduli stack of the representations of the 
above Ext-quiver $Q_p$ with dimension vector $(1, d^{(1)}, \ldots, d^{(m)})$, 
where $d^{(j)}=\dim V^{(j)}$, is then given by 
\begin{align*}
\mathcal{U}_p=\left(\mathbb{C}^N \times \prod_{j=1}^m R^{a^{(j)}f}(1, d^{(j)})
   \right)\Big/\prod_{j=1}^m GL(d^{(j)}). 
	\end{align*}
Then there is a super-potential $w_p$ on 
some formal completion $\widehat{\mathcal{U}}_p$
of $\mathcal{U}_p$ such that 
$\widehat{\mathcal{T}}_X(\beta, n)_p$
is isomorphic to $\mathrm{Crit}(w_p)$, see \cite{Todstack}, \cite[Proposition 9.11]{Toddbir}. 

Let $\chi_0$ be the character of $\prod_{j=1}^k GL(d^{(j)})$
given by the product of each determinant character
of $GL(d^{(j)})$. 
Then the formal fibers of $\pi^{\pm}$ are given by 
$\chi_0^{\pm}$-semistable loci 
on $\mathrm{Crit}(w_p)$. 
Therefore 
the categories 
\begin{align*}
	\widehat{\mathcal{DT}}_X(\beta, n)_p:=
	\mathrm{MF}(\widehat{\mathcal{U}}_p^{+}, w_p), \ 
	\widehat{\mathcal{PT}}_X(\beta, n)_p:=\mathrm{MF}(\widehat{\mathcal{U}}_p^-, w_p)
	\end{align*}
are natural candidates of formal local models of 
the (yet to be defined) DT/PT categories in (\ref{cat:DTPT}).
Theorem~\ref{thm2bis}
(more precisely, its version allowing loops at $0$ and a super-potential, see~Corollary~\ref{cor:variant})
implies that there is a semiorthogonal decomposition
\begin{align}\label{sod:formal}
		\widehat{\mathcal{DT}}_X(\beta, n)_p=
		\left\langle \boxtimes_{j=1}^m \boxtimes_{i=1}^{k^{(j)}}
		\widehat{\mathbb{S}}(d_i^{(j)})_{w_i^{(j)}} \boxtimes 
		\widehat{\mathcal{PT}}_X(\beta, n')_{p'}
		  \right\rangle,
	\end{align}
where $\sum d_{i}^{(j)}+n'=n$,  
the weights $w_i^{(j)}$ satisfy a condition analogous to \eqref{boundsalpha}, 
and the point $p' \in T_{n'}(X, \beta)$ satisfies 
$\sum d_i^{(j)}x^{(j)}+p'=p$. 
In the above, $\widehat{\mathbb{S}}(d)_w$ is the
formal analogue of the quasi-BPS 
category $\mathbb{S}(d)_w$ of $\mathbb{C}^3$ considered in~\cite{PT0, PT1}. 

We expect that (\ref{sod:formal}) can be globalized to give a 
semiorthogonal decomposition of $\mathcal{DT}_X(\beta, n)$. 
An issue here is that (\ref{sod:formal}) depends on local data $a^{(j)}$ 
and a choice of $\mu$ in Theorem~\ref{thm2bis}. 
In Theorem~\ref{thm2bis}, if we take $\mu=-a/2-\varepsilon$
for $0<\varepsilon \ll 1$, 
then these data cancel and the condition (\ref{boundsalpha}) becomes
\begin{align}\label{boundsalpha1.5}
    -1< \frac{v_1}{d_1}<\cdots<\frac{v_k}{d_k}\leq 0. 
\end{align}
From the above consideration, 
if the DT/PT categories in (\ref{cat:DTPT}) exist, we expect a 
semiorthogonal decomposition of the form 
\begin{align}\label{DTPT:global}
	\mathcal{DT}_X(\beta, n)=\left\langle 
	\boxtimes_{i=1}^k\mathbb{S}_X(d_i)_{w_i} \boxtimes 
	\mathcal{PT}_X(\beta, n')
	\right\rangle
	\end{align}
where $d_1+\cdots+d_k+n'=n$, 
the integers $w_i$ satisfy the condition (\ref{boundsalpha1.5})
under the transformation $w_i \mapsto v_i$ in Theorem~\ref{thm2bis}, 
and $\mathbb{S}_X(d)_w$ is an
analogue of the quasi-BPS category $\mathbb{S}(d)_w$ for $X$. 

In the local surface case, the DT/PT 
categories have been defined in~\cite{T} and the quasi-BPS categories in~\cite{P2}. 
In \cite{PT2},
we use the results of this paper to
construct the semiorthogonal 
decomposition (\ref{DTPT:global}) for reduced curve classes on local surfaces.

\subsection{The categorical DT/PT correspondence for \texorpdfstring{$\mathbb{C}^3$}{C3}}

We will give 
an example of dg-categories
of matrix factorizations with 
explicit super-potentials
which categorify DT/PT invariants on 
$\mathbb{C}^3$ with reduced supports. 
Then Theorem \ref{thm2bis} for $a=1$ 
implies the categorical DT/PT correspondence 
in this case (not only 
formal locally as in Subsection~\ref{sub:dtpt}). We are able to define the categories $\mathcal{DT}$ and $\mathcal{PT}$ in this case because of the global description as a critical locus of the relevant moduli spaces, see Theorem \ref{thm:globalcrit} and the discussion in Subsection \ref{sub:45}.

Let 
$X=\mathrm{Tot}_{\mathbb{P}^2}(\omega_{\mathbb{P}^2})$, and let 
$
\mathcal{T}_X^{\rm{red}}(m, d)
$ be the classical moduli stack of pairs $(F, s)$, where $F$
is a one-dimensional sheaf on $X$ with support a reduced plane curve
of degree $m$ 
in $\mathbb{P}^2$, and $s \colon \mathcal{O}_X \to F$ is a section with at most zero-dimensional cokernel. 
The open immersion $\mathbb{C}^2 \subset \mathbb{P}^2$ determines 
an open immersion $\mathbb{C}^3 \subset X$. 
We consider the open substack 
\begin{align*}
	\mathcal{T}_{\mathbb{C}^3}^{\rm{red}}(m, d)
	\subset \mathcal{T}_X^{\rm{red}}(m, d)
\end{align*} consisting of $(F, s)$ such that 
$\mathrm{Cok}(s)$ and the maximal zero-dimensional subsheaf of $F$ are supported on $\mathbb{C}^3$, 
and the one-dimensional support of $F$ is a plane curve from $(\mathbb{C}^{I_m})^{\rm{red}}$, see \eqref{open:I} and \eqref{ired2}. 

\begin{thm}\emph{(Theorem~\ref{lemma:globalcritical})}\label{thm:globalcrit}
Let $V$ be a vector space of dimension $d$.
Then $\mathcal{T}_{\mathbb{C}^3}^{\rm{red}}(m, d)$ is the global critical locus of a regular function \[\mathcal{T}_{\mathbb{C}^3}^{\rm{red}}(m, d)=\mathrm{Crit}\left(\Tr W_{m, d}\colon \left(V^{\oplus 2} \oplus V^{\vee} \oplus \mathfrak{g}^{\oplus 3} \times (\mathbb{C}^{I_m})^{\rm{red}}\right)\big/GL(V) \to\mathbb{C}\right),\]
where $\Tr W_{m, d}$ is explicitly 
given by, see~\eqref{def:falpha} and \eqref{W:formula}:
\begin{align*}
    \Tr W_{m, d}(u_1, u_2, v, A, B, C, (\alpha_{ij}))
     =\sum_{1\leq i+j \leq m}\alpha_{ij} v A^i B^j(u_2)
     +\Tr C(u_1 \circ v+[A, B]). 
\end{align*}
\end{thm}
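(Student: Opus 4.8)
The plan is to describe $\mathcal{T}_{\mathbb{C}^3}^{\rm{red}}(m, d)$ explicitly as a moduli stack of representations (with relations) of an appropriate quiver, and then recognize those relations as the partial derivatives of the superpotential $\Tr W_{m,d}$. First I would unwind what the data $(F, s)$ amount to: since $F$ is supported on a reduced plane curve of degree $m$ inside $\mathbb{C}^2 \subset \mathbb{P}^2$, pushing forward along $\mathbb{C}^3 = \mathrm{Tot}_{\mathbb{C}^2}(\OO) \to \mathbb{C}^2$ and using that $\mathbb{C}[x,y]$-modules of finite length are governed by the commuting matrices $A, B$ (the actions of $x$ and $y$), together with the extra action $C$ coming from the fiber direction of $\omega_{\mathbb{P}^2}$, one gets a triple $(A, B, C) \in \mathfrak{g}^{\oplus 3}$ on $V = H^0(F)$. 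The section $s$ contributes a vector $u_2 \in V$ (the image of $1$), the cokernel condition and the plane curve data contribute the linear functional $v \in V^{\vee}$ and the coefficients $(\alpha_{ij}) \in (\mathbb{C}^{I_m})^{\rm{red}}$ cutting out the curve, and $u_1 \in V$ records the remaining piece of the module structure along the $\omega$-direction. This identifies the ambient stack with $\left(V^{\oplus 2} \oplus V^{\vee} \oplus \mathfrak{g}^{\oplus 3} \times (\mathbb{C}^{I_m})^{\rm{red}}\right)/GL(V)$ as claimed; this step is essentially bookkeeping once the quiver-with-potential description of the Ext-quiver $Q^{1f}$ glued with $\mathbb{C}^3$-data is in place, as reviewed in Subsection~\ref{subdtpt2}.

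The substance is to match the scheme-theoretic defining equations of $\mathcal{T}_{\mathbb{C}^3}^{\rm{red}}(m,d)$ with $\mathrm{Crit}(\Tr W_{m,d})$ term by term. I would compute the partial derivatives of $\Tr W_{m,d}$: differentiating in $C$ yields $u_1 \circ v + [A,B] = 0$, which is exactly the condition that $(A,B)$ commute on the length-$d$ module \emph{after} accounting for the rank-one correction coming from the section and the $\OO$-twist (so that $X$ being Calabi-Yau and $F$ living on $X$, not on $\mathbb{P}^2$, is encoded correctly); differentiating in $u_1$ gives a relation involving $v$ and $C$; differentiating in $u_2$ gives $\sum_{i,j} \alpha_{ij}\, (A^i B^j)^{\vee} v = 0$ (transpose conventions aside), expressing that the section lands in the part of $F$ annihilated appropriately by the curve equation; differentiating in $v$ gives $\sum_{i,j}\alpha_{ij} A^i B^j u_2 = 0$, i.e.\ the section $s$ factors through $F$ having support on the curve $\{\sum \alpha_{ij} x^i y^j = 0\}$; and differentiating in $A$ and $B$ gives the compatibility of $C$ with the curve equation and the commutator. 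The key point is that these are precisely the relations defining the moduli of $(F,s)$: purity is not imposed (consistent with $\mathcal{T}$, not $P$), the reducedness of the curve is imposed through the chart $(\mathbb{C}^{I_m})^{\rm{red}}$ rather than through the equations, and the open conditions (cokernel zero-dimensional, supports in $\mathbb{C}^3$) are open and hence do not affect the critical-locus identity.

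I would organize this as: (i) set up the quiver-with-superpotential data and the dimension vector $(1, d)$, identifying linear maps with $u_1, u_2, v, A, B, C$; (ii) write $\Tr W_{m, d}$ as in the statement and compute $\mathrm{Crit}$; (iii) independently describe the functor of points of $\mathcal{T}_{\mathbb{C}^3}^{\rm{red}}(m,d)$ and check it is represented by the same equations, invoking \cite[Proposition 5.5.2]{T} and the critical-locus presentations of \cite{Todstack}, \cite[Proposition 9.11]{Toddbir} for the formal-local comparison and then globalizing using that everything in sight ($\mathbb{C}^3$, the curve family, the section) is genuinely global here; (iv) handle the GIT/stack quotient by $GL(V)$ and the open substack conditions. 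The main obstacle I expect is step (iii)–(iv): verifying that the naive linear-algebra moduli problem really is isomorphic \emph{as a stack} (not just bijective on points, and with the correct scheme structure) to $\mathcal{T}_{\mathbb{C}^3}^{\rm{red}}(m,d)$, including checking that no spurious components appear and that the reduced-curve locus $(\mathbb{C}^{I_m})^{\rm{red}}$ is exactly the right base — this requires care with the deformation theory of pairs $(F,s)$ on the non-compact threefold $\mathbb{C}^3$ and with the fact that $F$ is allowed to have a zero-dimensional subsheaf, so that $A, B, C$ need not be simultaneously "diagonalizable" in any sense. Everything else is a direct computation of partial derivatives of a cubic-type superpotential.
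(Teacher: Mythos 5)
Your plan correctly locates the hard step but does not supply it, and the dictionary you propose for it is not right. The substance of Theorem~\ref{thm:globalcrit} is exactly the identification of the pairs moduli stack with finite-dimensional linear-algebra data, and this cannot be set up the way you sketch: $V$ is not $H^0(F)$ (which is infinite-dimensional, $F$ being a one-dimensional sheaf on the non-compact $\mathbb{C}^3$); the $d$-dimensional space only appears after replacing $(F,s)$ by the two-term complex $I=(\mathcal{O}\to F)$, regarded --- after reducing to the surface --- as a perverse coherent sheaf on $\mathbb{P}^2$ framed along $l_\infty$ with $\ch_2(I)=-d$, where the ADHM-type equivalence \eqref{Ups} of \cite{BFG} applies. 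Your fallback for the stack-level comparison, namely invoking the critical charts of \cite{Todstack}, \cite[Proposition~9.11]{Toddbir} ``for the formal-local comparison and then globalizing,'' is precisely what cannot be done: those results give a super-potential only on a formal completion at each point of the good moduli space, and there is no canonical way to glue such charts into a global critical locus (if there were, neither the reducedness restriction nor the ADHM input would be needed). You also treat reducedness as a mere choice of chart $(\mathbb{C}^{I_m})^{\mathrm{red}}$, but it is an essential hypothesis: it guarantees that pushing a pair on $\mathbb{C}^3$ down to $\mathbb{P}^2$ keeps the cokernel zero-dimensional, cf.\ Remark~\ref{rmk:reduced}, and without it the comparison with the surface moduli stack fails.

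The paper's route, which your step (iii) is missing, is: first, on the surface, prove that the derived moduli stack $\mathfrak{T}_{\mathbb{C}^2}(m,d)$ of pairs on $\mathbb{C}^2$ is equivalent to the extended ADHM derived stack $\mathfrak{M}_{\mathbb{C}^2}(m,d)=\mu^{-1}(0)/GL(V)$ with $\mu=([A,B]+u\circ v,\ v\circ f_\alpha(A,B))$; this uses \eqref{Ups}, the fibrewise identification $h^{-1}(I)^{\mathrm{cl}}\cong\mathbb{P}(\Hom(I,\mathcal{O}_{\mathbb{P}^2}(m)))^{\circ}$ of Lemma~\ref{lem:class}, and a comparison of relative tangent complexes, so the identification holds as derived stacks and not just on points. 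Second, the threefold stack is recovered by dimensional reduction: by \cite[Lemma~5.5.4]{T} one has \eqref{isom:reduced}, which restricts to $\mathcal{T}^{\mathrm{red}}_{\mathbb{C}^3}(m,d)\cong\Omega_{\mathfrak{T}^{\mathrm{red}}_{\mathbb{C}^2}(m,d)}[-1]^{\mathrm{cl}}$, and the classical truncation of the $(-1)$-shifted cotangent of a derived zero locus $\mu^{-1}(0)/GL(V)$ is, by the general description in \cite[Section~2.1.1]{T}, the critical locus of the pairing of $\mu$ with the dual variables $(C,u_2)$ --- which is exactly $\Tr W_{m,d}$. This derived step is also what produces the correct scheme structure and the full set of equations $\partial W=0$ (note, e.g., that $\partial_v W$ mixes $f_\alpha(A,B)u_2$ with $Cu_1$, and $\partial_A W,\partial_B W$ involve commutators with $C$ and derivatives of $f_\alpha$), so the ``term-by-term matching of derivatives with sheaf-theoretic conditions'' you propose is not where the theorem lives and would in any case have to be redone at the level of functors of points, which your proposal does not do.
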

Using the above presentation of $\mathcal{T}_{\mathbb{C}^3}^{\rm{red}}(m, d)$ as a global critical locus, 
we define dg-categories
\[\mathcal{DT}_{\mathbb{C}^3}^{\rm{red}}(m, d), \ (\text{resp.}~\mathcal{PT}_{\mathbb{C}^3}^{\rm{red}}(m, d))\] categorifying DT (resp.~PT) invariants
	for 
	the Hilbert schemes of 1-dimensional subschemes (resp.~PT stable pair 
	moduli spaces) with reduced 
	supports. 
	Using Theorem \ref{thm2bis} for $a=1$ 
	and $\mu=-1/2-\varepsilon$ for $0<\varepsilon \ll 1$
	and applying matrix factorizations for $\Tr W_{d,m}$, 
	see Corollary~\ref{cor:variant}, 
	we obtain the following:
	
		\begin{thm}\label{thm:exam}
	There is a semiorthogonal decomposition 
		\begin{align}\label{SODlocalC3}
			\mathcal{DT}_{\mathbb{C}^3}^{\rm{red}}(m, d)
			=\left \langle \left(\boxtimes_{i=1}^k
			\mathbb{S}(d_i)_{w_i}
			 \right) \boxtimes \mathcal{PT}_{\mathbb{C}^3}^{\rm{red}}(m, d')
		 \right\rangle. 
			\end{align}
			The right hand side is after all $d'\leq d$, partitions $(d_i)_{i=1}^k$ of $d-d'$, and integers $(w_i)_{i=1}^k$ such that for $v_i:=w_i+d_i\left(d'+\sum_{j>i} d_j-\sum_{j<i}d_j\right)$, we have
    \begin{equation*}
    -1< \frac{v_1}{d_1}<\cdots<\frac{v_k}{d_k} \leq 0.
\end{equation*}
		\end{thm}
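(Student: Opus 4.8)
The plan is to deduce the statement by combining the global critical locus description of Theorem~\ref{thm:globalcrit} with the variant of Theorem~\ref{thm2bis} that allows a super-potential, namely Corollary~\ref{cor:variant}. By Theorem~\ref{thm:globalcrit}, the stack $\mathcal{T}_{\mathbb{C}^3}^{\rm{red}}(m,d)$ is the critical locus of $\Tr W_{m,d}$ on the smooth quotient stack
\[
\mathfrak{M}_m(d):=\left(V^{\oplus 2}\oplus V^\vee\oplus \mathfrak{g}^{\oplus 3}\times (\mathbb{C}^{I_m})^{\rm{red}}\right)\big/GL(V).
\]
First I would observe that, after forgetting the factor $B:=(\mathbb{C}^{I_m})^{\rm{red}}$, the quotient stack $\big(V^{\oplus 2}\oplus V^\vee\oplus \mathfrak{g}^{\oplus 3}\big)/GL(V)$ is precisely the moduli stack of representations of the DT/PT quiver $Q^{1f}$ of dimension vector $(1,d)$, with no loops at the vertex $0$: the two maps into $V$ are the $a+1=2$ arrows $0\to 1$, the map $v\in V^\vee$ is the arrow $1\to 0$, and $\mathfrak{g}^{\oplus 3}$ provides the three loops at $1$. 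Consequently the $\chi_0$-semistable (resp.\ $\chi_0^{-1}$-semistable) locus of $\mathfrak{M}_m(d)$ is $I^1(d)\times B$ (resp.\ $P^1(d)\times B$), and, by the definitions in Subsection~\ref{sub:45},
\[
\mathcal{DT}_{\mathbb{C}^3}^{\rm{red}}(m,d)=\mathrm{MF}\big(I^1(d)\times B,\ \Tr W_{m,d}\big),\qquad \mathcal{PT}_{\mathbb{C}^3}^{\rm{red}}(m,d')=\mathrm{MF}\big(P^1(d')\times B,\ \Tr W_{m,d'}\big).
\]

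Next I would apply Theorem~\ref{thm2bis} with $a=1$ in families over the smooth base $B$; this is the instance of Corollary~\ref{cor:variant} in which the ``$N$ loops at $0$'' are replaced by the affine base $B$, on which $GL(V)$ acts trivially. This yields a semiorthogonal decomposition of $D^b(I^1(d)\times B)$ with building blocks $\big(\boxtimes_{i=1}^k \mathbb{M}(d_i)_{w_i}\big)\boxtimes D^b\!\big(P^1(d')\times B\big)$, indexed exactly as in \eqref{SOD11}; the quasi-BPS factors $\mathbb{M}(d_i)_{w_i}$ are independent of $B$, which is carried entirely by the terminal factor. As in the proof of Theorem~\ref{thm2bis}, this decomposition is realized by window subcategories defined through weight inequalities for the maximal torus of $GL(d)$, so the same subcategories determine a semiorthogonal decomposition of the categories of matrix factorizations of $\Tr W_{m,d}$ --- this is the content of Corollary~\ref{cor:variant}. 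Applying $\mathrm{MF}(-,\Tr W_{m,d})$ then produces a semiorthogonal decomposition of $\mathcal{DT}_{\mathbb{C}^3}^{\rm{red}}(m,d)$ with the same index set.

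It remains to identify the blocks. On the locus of $\mathfrak{M}_m(d)$ supporting a quasi-BPS factor $\mathbb{M}(d_i)_{w_i}$ the components $u_1,u_2,v$ attached to the vertex $0$ vanish, so $\Tr W_{m,d}$ restricts there to $\Tr(C[A,B])$, the potential of $\mathbb{C}^3$; hence the category of matrix factorizations of $\Tr(C[A,B])$ supported on $\mathbb{M}(d_i)_{w_i}$ is, by definition, the quasi-BPS category $\mathbb{S}(d_i)_{w_i}$ of $\mathbb{C}^3$ from \cite{PT0, PT1}. On the terminal factor, $D^b(P^1(d')\times B)$ coupled with the full potential $\Tr W_{m,d'}$ is $\mathcal{PT}_{\mathbb{C}^3}^{\rm{red}}(m,d')$ by the identification in the first paragraph, giving \eqref{SODlocalC3}. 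For the numerical condition, one chooses $\mu=-1/2-\varepsilon$ with $0<\varepsilon\ll 1$ generic so that $2\mu l\notin \mathbb{Z}$ for $1\leq l\leq d$; for $a=1$ the inequalities \eqref{boundsalpha} become $-1+\varepsilon<v_1/d_1<\cdots<v_k/d_k<\varepsilon$, and since every $d_i\leq d$, taking $\varepsilon<1/d$ makes this system equivalent to $-1<v_1/d_1<\cdots<v_k/d_k\leq 0$.

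The main obstacle is the passage to matrix factorizations in the second step: one must verify that the window subcategories underlying Theorem~\ref{thm2bis} remain admissible and still assemble into a semiorthogonal decomposition once coupled to the potential $\Tr W_{m,d}$, and that this potential restricts on each window block to the Thom--Sebastiani sum $\Tr W_{m,d'}\boxplus\big(\boxplus_{i=1}^k\Tr(C_i[A_i,B_i])\big)$ on the relevant torus-fixed loci. This is precisely what Corollary~\ref{cor:variant} supplies; the remaining task is the (routine but notation-heavy) matching of the resulting blocks with $\mathbb{S}(d_i)_{w_i}$ and with $\mathcal{PT}_{\mathbb{C}^3}^{\rm{red}}(m,d')$, which follows by unwinding the definitions together with the explicit formula for $\Tr W_{m,d}$ in Theorem~\ref{thm:globalcrit}.
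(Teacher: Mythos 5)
Your proposal is correct and follows essentially the same route as the paper: the paper's proof of Theorem~\ref{thm:exam} is precisely the application of Corollary~\ref{cor:variant} (the super-potential variant of Theorem~\ref{thm2bis}, with $\mathbb{C}^N$ replaced by the open subset $(\mathbb{C}^{I_m})^{\rm{red}}$) with $a=1$ and $\mu=-1/2-\varepsilon$, combined with the critical-locus description from Theorem~\ref{thm:globalcrit} and the definitions of $\mathcal{DT}_{\mathbb{C}^3}^{\rm{red}}(m,d)$, $\mathcal{PT}_{\mathbb{C}^3}^{\rm{red}}(m,d')$ as matrix factorizations on the GIT semistable loci. Your identification of the ambient stack with $Q^{1f}$-representations times the base, your block identification via $\widetilde{W}|_Q=X[Y,Z]$, and your check that the inequalities \eqref{boundsalpha} become $-1<v_1/d_1<\cdots<v_k/d_k\leq 0$ all match the paper's argument.
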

The quasi-BPS categories $\mathbb{S}(d)_w$ are defined as categories of matrix factorizations for the potential $\mathrm{Tr}\,X[Y,Z]$ with factors in $\mathbb{M}(d)_w$, see \cite{PT1} for results on these categories. Fix $0\leq d'\leq d$.
By \cite[Theorem 1.1]{PT0}, the categories with which we tensor $\mathcal{PT}_{\mathbb{C}^3}^{\rm{red}}(m, d')$ in \eqref{SODlocalC3} form a semiorthogonal decomposition of $\mathcal{DT}(d-d')$, the DT category of $d-d'$ points in $\mathbb{C}^3$. We thus obtain the following decomposition in K-theory, compare with \eqref{DTPT}:
\begin{cor}\label{cor14}
Let $d\in \mathbb{N}$. Then there is an isomorphism \begin{align*}
    K\left(\mathcal{DT}_{\mathbb{C}^3}^{\rm{red}}(m, d)\right)\cong
    \bigoplus_{d'=0}^d K\left(\mathcal{DT}(d-d')\otimes \mathcal{PT}_{\mathbb{C}^3}^{\rm{red}}(m, d')\right).
\end{align*}
\end{cor}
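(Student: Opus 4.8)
The plan is to deduce Corollary~\ref{cor14} formally from the two semiorthogonal decompositions already in hand: the decomposition \eqref{SODlocalC3} of Theorem~\ref{thm:exam}, and the semiorthogonal decomposition of the DT category $\mathcal{DT}(d-d')$ of $d-d'$ points on $\mathbb{C}^3$ proved in \cite[Theorem 1.1]{PT0}. The only general input needed is the additivity of $K$-theory: a semiorthogonal decomposition $\mathcal{A}=\langle\mathcal{A}_i\rangle_{i\in S}$ of a small idempotent complete dg-category induces a direct sum decomposition $K(\mathcal{A})\cong\bigoplus_{i\in S}K(\mathcal{A}_i)$, and this applies to all the categories of matrix factorizations occurring here, since they are compactly generated and the decompositions are admissible. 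In particular, no K\"unneth-type statement for $K(\mathcal{A}\boxtimes\mathcal{B})$ is required — it is $K$ of the product, not the product of the $K$-theories, that appears on the right-hand side of the corollary.

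First I would fix $0\leq d'\leq d$ and recall, as already observed in the discussion preceding the corollary, that the indexing data appearing in \eqref{SODlocalC3} for this fixed $d'$ — partitions $(d_i)_{i=1}^k$ of $d-d'$ together with integers $(w_i)_{i=1}^k$ such that the $v_i$ satisfy $-1<v_1/d_1<\cdots<v_k/d_k\leq 0$ — is precisely the indexing data of the semiorthogonal decomposition
\begin{equation*}
\mathcal{DT}(d-d')=\Big\langle\boxtimes_{i=1}^k\mathbb{S}(d_i)_{w_i}\Big\rangle
\end{equation*}
given by \cite[Theorem 1.1]{PT0}. Next I would tensor this decomposition over $\mathbb{C}$ with the fixed dg-category $\mathcal{PT}_{\mathbb{C}^3}^{\rm{red}}(m, d')$. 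Since tensoring with a fixed dg-category over the base field preserves semiorthogonality and generation, this yields a semiorthogonal decomposition
\begin{equation*}
\mathcal{DT}(d-d')\otimes\mathcal{PT}_{\mathbb{C}^3}^{\rm{red}}(m, d')=\Big\langle\big(\boxtimes_{i=1}^k\mathbb{S}(d_i)_{w_i}\big)\boxtimes\mathcal{PT}_{\mathbb{C}^3}^{\rm{red}}(m, d')\Big\rangle
\end{equation*}
indexed by the same combinatorial data. Comparing with Theorem~\ref{thm:exam}, the right-hand side of \eqref{SODlocalC3} is exactly the union of these blocks as $d'$ ranges over $0,\dots,d$, so \eqref{SODlocalC3} refines to a semiorthogonal decomposition
\begin{equation*}
\mathcal{DT}_{\mathbb{C}^3}^{\rm{red}}(m, d)=\Big\langle\mathcal{DT}(d-d')\otimes\mathcal{PT}_{\mathbb{C}^3}^{\rm{red}}(m, d')\ \text{for}\ 0\leq d'\leq d\Big\rangle.
\end{equation*}
Applying additivity of $K$-theory to this last decomposition gives the claimed isomorphism
\begin{equation*}
K\big(\mathcal{DT}_{\mathbb{C}^3}^{\rm{red}}(m, d)\big)\cong\bigoplus_{d'=0}^d K\big(\mathcal{DT}(d-d')\otimes\mathcal{PT}_{\mathbb{C}^3}^{\rm{red}}(m, d')\big).
\end{equation*}

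The step requiring the most care — and the one I expect to be the main technical obstacle — is the middle one: checking that tensoring the semiorthogonal decomposition of \cite[Theorem 1.1]{PT0} by $\mathcal{PT}_{\mathbb{C}^3}^{\rm{red}}(m, d')$ really does give a semiorthogonal decomposition \emph{at the level of the relevant categories of matrix factorizations}, so that the combinatorics of \eqref{SODlocalC3} genuinely regroups by $d'$ into the blocks above. This amounts to verifying the monoidal and functorial formalism for these matrix factorization categories — compact generation, exactness of $-\otimes_{\mathbb{C}}\mathcal{PT}_{\mathbb{C}^3}^{\rm{red}}(m, d')$, preservation of admissibility — which is already implicit in the setup used to prove Theorem~\ref{thm:exam}. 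Granting that, both the passage from a semiorthogonal decomposition to a direct sum on $K$-theory and the bookkeeping of the index set are routine.
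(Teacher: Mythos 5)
Your overall strategy is the paper's: take the Grothendieck group of the decomposition \eqref{SODlocalC3}, and for each fixed $d'$ identify the relevant group of summands with $K\big(\mathcal{DT}(d-d')\otimes \mathcal{PT}_{\mathbb{C}^3}^{\rm{red}}(m,d')\big)$ via the semiorthogonal decomposition of $\mathcal{DT}(d-d')$ from \cite[Theorem 1.1]{PT0} tensored with $\mathcal{PT}_{\mathbb{C}^3}^{\rm{red}}(m,d')$; this is exactly how the paper deduces the corollary, by repeating the argument that gives the isomorphism \eqref{K} in the proof of Corollary \ref{thm12}. You are also right that no K\"unneth statement is needed. However, your middle step claims more than this and is not justified: you assert that \eqref{SODlocalC3} \emph{refines to a semiorthogonal decomposition} of $\mathcal{DT}_{\mathbb{C}^3}^{\rm{red}}(m,d)$ with summands $\mathcal{DT}(d-d')\otimes\mathcal{PT}_{\mathbb{C}^3}^{\rm{red}}(m,d')$, $0\leq d'\leq d$. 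Regrouping the summands of a semiorthogonal decomposition into larger blocks indexed by $d'$ requires that the blocks admit a linear order with one-sided Hom-vanishing between any two summands lying in different blocks. The only semiorthogonality available (Proposition \ref{prop311}, via Corollary \ref{cor:variant}) is in the direction of the order of Subsection \ref{comparison}, governed by $p(S)=\sum_i v_i-\varepsilon\sum_i d_i$ for $\mu=-a/2-\varepsilon$, and this order interleaves summands with different $d'$: for instance, for $d\geq 3$ the summand with $d-d'=2$, $\sum v_i=0$ (with $p=-2\varepsilon$) precedes the summand with $d-d'=3$, $\sum v_i=0$ (with $p=-3\varepsilon$), which in turn precedes the summand with $d-d'=2$, $\sum v_i=-1$ (with $p=-1-2\varepsilon$). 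Merging the two $d-d'=2$ summands into a single block sitting on one side of the $d-d'=3$ summand would require Hom-vanishings in the direction opposite to the stated order, which are neither proved nor expected. So, as written, this intermediate categorical claim is a genuine gap.

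The gap is inessential to your conclusion, because the regrouping should be performed in $K$-theory rather than at the categorical level: additivity applied directly to \eqref{SODlocalC3} gives a direct sum over \emph{all} summands $\big(\boxtimes_{i=1}^k\mathbb{S}(d_i)_{w_i}\big)\boxtimes\mathcal{PT}_{\mathbb{C}^3}^{\rm{red}}(m,d')$, and a direct sum may be reorganized freely; then, for each fixed $d'$, your step of tensoring the decomposition of \cite[Theorem 1.1]{PT0} with $\mathcal{PT}_{\mathbb{C}^3}^{\rm{red}}(m,d')$ identifies the corresponding sub-sum with $K\big(\mathcal{DT}(d-d')\otimes\mathcal{PT}_{\mathbb{C}^3}^{\rm{red}}(m,d')\big)$, since the combinatorial data $(d_i,w_i)_{i=1}^k$ with $-1<v_1/d_1<\cdots<v_k/d_k\leq 0$ indexing the two decompositions agree. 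Deleting the regrouped-SOD claim and arguing this way recovers the paper's proof verbatim.
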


We do not know whether the Künneth isomorphism holds above. In Corollary \ref{cor15}, we discuss the analogous statement in the localized equivariant situation (with respect to the two dimensional Calabi-Yau torus of $\mathbb{C}^3$), when the Künneth isomorphism holds.

\subsection{Acknowledgements}
	Y.~T.~is supported by World Premier International Research Center
	Initiative (WPI initiative), MEXT, Japan, and Grant-in Aid for Scientific
	Research grant (No.~19H01779) from MEXT, Japan.

\section{Preliminaries}

\subsection{Notations}\label{subsection:notation}
The spaces considered in this paper are defined 
over the complex field $\mathbb{C}$ and are quotient stacks $\X=A/G$, where $A$ is a dg-scheme (or also 
called derived scheme), the derived zero locus of a section $s$ of a finite rank vector bundle $\mathcal{E}$ on a finite type separated scheme $X$ over $\mathbb{C}$, and $G$ is a reductive group. For such a dg-scheme $A$, let $\dim A:=\dim X-\text{rank}(\mathcal{E})$,
let $A^{\mathrm{cl}}:=Z(s)\subset X$ be the (classical) zero locus, and let $\X^{\mathrm{cl}}:=A^{\mathrm{cl}}/G$. We denote by $\mathcal{O}_{\X}$ or $\mathcal{O}_A$ the structure sheaf of $\X$. 
We denote by $D^b(\mathcal{X})$ the bounded derived category of coherent sheaves on $\X$. 

For $G$ a reductive group and $A$ a dg-scheme as above, denote by $A/G$ the corresponding quotient stack and by $A\ssslash G$ the quotient dg-scheme with dg-ring of regular functions $\mathcal{O}_A^G$.

In this paper, all the dg-categories are $\mathbb{C}$-linear 
pre-triangulated dg-categories, in particular their homotopy categories are 
triangulated categories. 
For $\mathcal{D}$ a dg-category, we denote by $K(\mathcal{D})$ the Grothendieck group of the homotopy category of $\mathcal{D}$. 
For $\X$ a stack as above, we denote by $G(\X)=G(\X^{\text{cl}})$ the Grothendieck group of $D^b\text{Coh}(\X)$ and by $K(\X)$ the Grothendieck group of the category of perfect complexes $\text{Perf}(\X)\subset D^b(\X)$.

\subsection{Weights and partitions} 

\subsubsection{}\label{sss1}
Let $Q=(I, E)$ be the quiver with vertex set $I=\{1\}$ and edge set $E=\{x, y, z\}$. 
For $a\in \mathbb{N}$, let $Q^a=(J, E^a)$ be the quiver with vertex set $J=\{0, 1\}$ and edge set $E^a$ containing three loops $E=\{x, y, z\}$ at $1$, $a$ edges from $0$ to $1$, and $a$ edges from $1$ to $0$. Then $Q^0$ is the disjoint union of $Q$ and the quiver with one vertex $0$.
Let $Q^{af}=(J, E^{af})$ be the quiver with edges 
$E^{af}=e \sqcup E^a$, where $e$ is an edge from $0$ to $1$, see Figure~1 for a picture of $Q^{2f}$. 

For $d\in \mathbb{N}$, let $V$ be a $\mathbb{C}$-vector space of dimension $d$. 
We often write $GL(d)$ as $GL(V)$. 
Its Lie algebra is 
denoted by $\mathfrak{gl}(d)=\mathfrak{gl}(V):=\text{End}(V)$. 
When the dimension is clear from the context, we drop $d$ from its notation
and write it as $\mathfrak{g}$. 
Consider the $GL(V)\cong GL(d)$ representations:
\begin{align*}
R(d)&:=\mathfrak{gl}(V)^{\oplus 3},\\
    R^a(1,d)&:=V^{\oplus a}\oplus \left(V^{\vee}\right)^{\oplus a}\oplus \mathfrak{gl}(V)^{\oplus 3},\\
    R^{af}(1,d)&:=V^{\oplus \left(a+1\right)}\oplus \left(V^{\vee}\right)^{\oplus a}\oplus \mathfrak{gl}(V)^{\oplus 3}.
\end{align*}
Define the following stacks:
\begin{align*}
\X(d)&:=R(d)/GL(d),\\
\X^a(1, d)&:=R^a(1, d)/GL(d),\\
\mathcal{X}^{af}(1, d)&:=R^{af}(1, d)/GL(d).
\end{align*}

 \subsubsection{}
 Fix 
 $T(d)\subset GL(d)$ the maximal torus consisting of diagonal matrices. 
Denote by $M(d)$ the weight space of $T(d)$ and let $M(d)_{\mathbb{R}}:=M(d)\otimes_{\mathbb{Z}}\mathbb{R}$. Let $\beta_1,\ldots, \beta_d$ be the simple roots of $GL(d)$. 
A weight $\chi=\sum_{i=1}^d c_i\beta_i$ is dominant 
(resp. strictly dominant) 
if 
\begin{align*}
c_1\leq\cdots\leq c_d, \ 
(\mbox{resp.~}
c_1<\cdots<c_d).
\end{align*}
We denote by $M^+\subset M$ and $M^+_{\mathbb{R}}\subset M_{\mathbb{R}}$ the dominant chambers. When we want to emphasize the dimension vector, we write $M(d)$ etc. Denote by $N$ the coweight lattice of $T(d)$ and by $N_{\mathbb{R}}:=N\otimes_{\mathbb{Z}}\mathbb{R}$. Let $\langle\,,\,\rangle$ be the natural pairing between $N_{\mathbb{R}}$ and $M_{\mathbb{R}}$. 

Let $W=\mathfrak{S}_d$ be the Weyl group of $GL(d)$. For $\chi\in M(d)^+$, let $\Gamma_{GL(d)}(\chi)$ be the irreducible 
representation of $GL(d)$ of highest weight $\chi$. We drop $GL(d)$ from the notation if the dimension vector $d$ is clear from the context. Let $w*\chi:=w(\chi+\rho)-\rho$ be the Weyl-shifted action of $w\in W$ on $\chi\in M(d)_\mathbb{R}$. We denote by $\ell(w)$ the length of $w\in W$.

\subsubsection{}\label{def:setW}
Denote by $\mathcal{W}^a$ the multiset of $T(d)$-weights of $R^a(1, d)$ and by $\mathcal{W}^{af}$ the multiset of $T(d)$-weights of $R^{af}(1, d)$.
Namely, we have 
\begin{align*}
\mathcal{W}^a=\{(\beta_i-\beta_j)^{\times 3}, (\pm \beta_i)^{\times a} \mid 1\leq i, j \leq d\}, \
\mathcal{W}^{af} =\mathcal{W}^a \cup \{\beta_i \mid 1\leq i\leq d\}. 
\end{align*}
We denote by 
$\rho$ half the sum of positive roots of $GL(d)$. 
In our convention of the dominant chamber, 
it is given by 
\begin{align*}
    \rho=\frac{1}{2}\mathfrak{g}^{\lambda<0}=\frac{1}{2}\sum_{j<i}(\beta_i-\beta_j),
\end{align*}
where $\lambda$ is the antidominant cocharacter $\lambda(t)=(t^d, t^{d-1}, \ldots, t)$. 
We denote by $1_d:=z\cdot\text{Id}$ the diagonal cocharacter of $T(d)$. 
We define the weights in $M_{\mathbb{R}}$:
\begin{align}\label{def:taud}
\sigma_d:=\sum_{j=1}^d\beta_j, \ \tau_d:=\frac{1}{d}\sigma_d. 
\end{align}


\subsubsection{}\label{ss13} Let $G$ be a reductive group (in this paper, one may assume $G=GL(d)$), let $X$ be a $G$-representation, and let
$\X=X/G$ be the corresponding quotient stack. Let $\mathcal{V}$ be the multiset of $T(d)$-weights of $X$.
For a cocharacter $\lambda$ of $T(d)$,
let $X^\lambda\subset X$ be the subspace generated by weights $\beta\in \mathcal{V}$ such that $\langle \lambda, \beta\rangle=0$, let $X^{\lambda\geq 0}\subset X$ be the subspace generated by weights $\beta\in \mathcal{V}$ such that $\langle \lambda, \beta\rangle\geq 0$, and let $G^\lambda$ and $G^{\lambda\geq 0}$ be the Levi and parabolic groups associated to $\lambda$.
Consider the fixed and attracting stacks
\begin{align}\notag
    \X^\lambda:=X^\lambda/ G^\lambda,\
    \X^{\lambda\geq 0}:=X^{\lambda\geq 0}/G^{\lambda\geq 0}
\end{align}
with maps
\begin{align}\label{map:attracting}\X^\lambda\xleftarrow{q_\lambda}\X^{\lambda\geq 0}\xrightarrow{p_\lambda}\X.
\end{align}
We abuse notation and denote by $\X^{\lambda\geq 0}$ the class \begin{align*}\left[X^{\lambda\geq 0}\right]-\left[\mathfrak{g}^{\lambda\geq 0}\right]\in K_0(T(d))=M\end{align*}
and by $\langle \lambda, \X^{\lambda\geq 0}\rangle$ the corresponding integer. We use the similar notation for $\X^{\lambda>0}$, $\X^{\lambda\leq 0}$ etc.

\subsubsection{}\label{paco} 
Let $d\in \mathbb{N}$.
We call $\dd:=(d_i)_{i=1}^k$ a partition of $d$ if $d_i\in\mathbb{N}$ are all non-zero and $\sum_{i=1}^k d_i=d$. We similarly define partitions of $(d,w)\in\mathbb{N}\times\mathbb{Z}$.
For a partition $(d_i)_{i=1}^k$
 of $d$, there is an antidominant cocharacter $\lambda$
 of $T(d)$ such that 
 $\mathcal{X}(d)^{\lambda}\cong \times_{i=1}^k \mathcal{X}(d_i)$. 
 For example we can take 
 \begin{align}\label{take:lambda}
     \lambda=(\overbrace{t^{k}, \ldots, t^{k}}^{d_1}, \overbrace{t^{k-1}, \ldots, t^{k-1}}^{d_2}, 
     \ldots, \overbrace{t, \ldots, t}^{d_k}). 
 \end{align}
 We have the maps from (\ref{map:attracting})
 \[\times_{i=1}^k\X(d_i)
\xleftarrow{q_\lambda}\X(d)^{\lambda\geq 0}
\xrightarrow{p_\lambda}\X(d).\]
We also use the notations $p_\lambda=p_{\dd}$, $q_\lambda=q_{\dd}$.
Conversely, given an antidominant cocharacter $\lambda$, there is an associated 
partition $(d_i)_{i=1}^k$ inducing the diagram above. 
 Define the length $\ell(\lambda):=k$. 

The stack $\mathcal{X}(d)^{\lambda \geq 0}$ is isomorphic to the 
moduli stack of filtrations of $Q$-representations 
\begin{align*}
    0=R_0 \subset R_1 \subset \cdots \subset R_k
\end{align*}
such that $R_i/R_{i-1}$ has dimension $d_i$. 
The morphism $q_{\lambda}$ sends the above filtration to its 
associated graded, and $p_{\lambda}$ sends it to $R_k$. 
The categorical Hall product for $Q$ is given 
by the functor 
$p_{\lambda*}q_\lambda^*=p_{\dd*}q_{\dd}^*$ and denoted by
\begin{align}\label{prel:hall}
    \ast 
     \colon D^b(\mathcal{X}(d_1)) \boxtimes
     \cdots \boxtimes D^b(\mathcal{X}(d_k))
     \to D^b(\mathcal{X}(d)). 
\end{align}
We may drop the subscript $\lambda$ or $\dd$ in the functors $p_*$ and $q^*$ when the cocharacter $\lambda$ or the partition $\dd$ is clear. 

We will also use the Hall products for the quivers $Q^a$ and $Q^{af}$. 
For $e\leq d$, let $(d_i)_{i=1}^k$ be a partition of $e$ and set $d'=d-e$. 
Let $\lambda$ be the antidominant cocharacter of $T(e)$ 
given by (\ref{take:lambda}), and 
set $\lambda'=(\lambda, 1_{d'})$
which is an antidominant cocharacter of $T(d)$. 
Then the diagrams of (\ref{map:attracting}) for $\mathcal{X}^a(1, d)$, 
$\mathcal{X}^{af}(1, d)$ 
are given by 
\begin{align}\label{attract:a}
&\times_{i=1}^k \X(d_i) \times \X^a(1, d') \stackrel{q_{\lambda}}{\leftarrow}
\X^a(1, d)^{\lambda \geq 0} \stackrel{p_{\lambda}}{\to} \X^a(1, d), \\ \notag
&\times_{i=1}^k \X(d_i) \times \X^{af}(1, d') \stackrel{q_{\lambda}}{\leftarrow}
\X^{af}(1, d)^{\lambda \geq 0} \stackrel{p_{\lambda}}{\to} \X^{af}(1, d)
\end{align}
respectively. 
The functors $p_{\lambda \ast}q_{\lambda}^{\ast}$ give 
categorical Hall products
\begin{align}\label{prel:halla}
    &\ast 
     \colon D^b(\mathcal{X}(d_1)) \boxtimes
     \cdots \boxtimes D^b(\mathcal{X}(d_k))\boxtimes D^b(\mathcal{X}^a(1, d'))
     \to D^b(\mathcal{X}^a(1, d)),\\
     &\ast 
     \colon D^b(\mathcal{X}(d_1)) \boxtimes
     \cdots \boxtimes D^b(\mathcal{X}(d_k))\boxtimes D^b(\mathcal{X}^{af}(1, d'))
     \to D^b(\mathcal{X}^{af}(1, d)).\nonumber
\end{align}

\subsubsection{}\label{id} Let $(d_i)_{i=1}^k$ be a partition of $d$. There is an identification \[\bigoplus_{i=1}^k M(d_i)\cong M(d),\] where the simple roots $\beta_j$ in $M(d_1)$ correspond to the first $d_1$ simple roots $\beta_j$ of $M(d)$ etc.





\subsection{Polytopes}\label{ss1}
We construct several polytopes in $M(d)_{\mathbb{R}}$ which will be used to define categories in Subsection \ref{ss2}. 
The polytope $\mathbf{W}(d)$ is defined as
\begin{equation}\label{W}
    \mathbf{W}(d):=\frac{3}{2}\text{sum}[0, \beta_i-\beta_j]+\mathbb{R}\tau_d\subset M(d)_{\mathbb{R}},
    \end{equation}
where the Minkowski sum is after all $1\leq i, j\leq d$ and where $\tau_d$ is given by (\ref{def:taud}).  Consider the hyperplane
\begin{equation}\label{W0}
    \mathbf{W}(d)_w:=\frac{3}{2}\text{sum}[0, \beta_i-\beta_j]+w\tau_d\subset \mathbf{W}(d).
    \end{equation}
    
The polytope $\mathbf{V}(d)$ is defined as
\begin{equation}\label{V}
    \mathbf{V}(d):=\frac{3}{2}\text{sum}[0, \beta_i-\beta_j]+\text{sum}[-\beta_k, 0]\subset M(d)_{\mathbb{R}},
\end{equation}
where the Minkowski sum is after all $1\leq i, j, k\leq d$. Note that the definition of the polytope $\mathbf{V}(d)$ differs by the one used in \cite{PT0} by a translation by $\sigma_d$. 
We let $\mathbf{V}(d)_w\subset \mathbf{V}(d)$ be the subspace of weights $\chi$ such that $\langle 1_d, \chi\rangle=w$.

Finally, define the (not necessarily closed) polytopes in $M(d)_\mathbb{R}$:
\begin{align*}
    \mathbf{W}^a(1, d)&:=\frac{3}{2}\text{sum}[0, \beta_i-\beta_j]+\frac{a}{2}\text{sum}(-\beta_k, \beta_k],
  \\
    \mathbf{V}^a(1, d)&:=\frac{3}{2}\text{sum}[0, \beta_i-\beta_j]+\frac{a}{2}\text{sum}[-\beta_k, \beta_k]+\text{sum}[-\beta_k, 0],
\end{align*}
where the Minkowski sums are after all $1\leq i, j, k\leq d$. 

\subsection{A corollary of the Borel-Weyl-Bott theorem}

For future reference, we state a result from \cite[Section 3.2]{hls}. We continue with the notations from Subsection \ref{ss13}. 
For a weight $\chi \in M$, 
let $\chi^+$ be the dominant Weyl-shifted conjugate of $\chi$ if it exists, and let $\chi^+=0$ otherwise.
Let $\mathcal{V}$ be the multiset of 
weights in $X$. 
For a multiset $J\subset \mathcal{V}$, let
\[\sigma_J:=\sum_{\beta\in J}\beta.\]
For a weight $\chi \in M$, let 
$w$ be the element of the Weyl group such that $w*(\chi-\sigma_J)$ is dominant or zero. It has length $\ell(w)=:\ell(J)$.

\begin{prop}\label{bbw}
Let $G$ be a reductive group, let $X$ be a $G$-representation, and
let $\lambda$ be a cocharacter of the maximal torus $T\subset G$. 
Recall the fixed and attracting stacks and the corresponding maps
\[X^\lambda/G^\lambda\xleftarrow{q_\lambda}X^{\lambda\geq 0}/G^{\lambda\geq 0}\xrightarrow{p_\lambda}X/G.\]
Let $\chi$ be a weight of $T$. 
Then there is a quasi-isomorphism
\[\left(\bigoplus_{J}\mathcal{O}_{X}\otimes \Gamma_{G}\left((\chi-\sigma_J)^+\right)\left[|J|-\ell(J)\right], d\right)\xrightarrow{\sim}p_{\lambda*}q_{\lambda}^*\left(\mathcal{O}_{X^\lambda}\otimes\Gamma_{G^\lambda}(\chi)\right),\] where the complex on the left hand side has terms (shifted) vector bundles for all multisets $J\subset \{\beta\in\mathcal{V} \mid \langle \lambda, \beta\rangle<0\}$. 
\end{prop}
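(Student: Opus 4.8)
The plan is to reduce the statement to a computation on the attracting stack $X^{\lambda\geq 0}/G^{\lambda\geq 0}$, where $p_\lambda$ factors as $X^{\lambda\geq 0}/G^{\lambda\geq 0}\to X/G^{\lambda\geq 0}\to X/G$. Since $q_\lambda$ is a (stack) affine bundle projection obtained from the surjection $X^{\lambda\geq 0}\twoheadrightarrow X^\lambda$ with kernel the $\lambda>0$ part, we have $q_\lambda^*\bigl(\OO_{X^\lambda}\otimes\Gamma_{G^\lambda}(\chi)\bigr)\cong \OO_{X^{\lambda\geq 0}}\otimes\Gamma_{G^\lambda}(\chi)$, where $\Gamma_{G^\lambda}(\chi)$ is viewed as a $G^{\lambda\geq 0}$-representation via the projection $G^{\lambda\geq 0}\to G^\lambda$. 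Then $p_{\lambda*}$ computes $R\Gamma$ of the structure sheaf along the fibers of $X/G^{\lambda\geq 0}\to X/G$, i.e. induction from $G^{\lambda\geq 0}$ to $G$, twisted by $\OO_X$.

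First I would make the fiber direction explicit: $X/G^{\lambda\geq 0}\to X/G$ is, up to the contractible direction, the flag-type bundle $G/G^{\lambda\geq 0}$, and pushing forward amounts to applying the derived induction functor $R\mathrm{Ind}_{G^{\lambda\geq 0}}^G$ to the $G^{\lambda\geq 0}$-module $\OO_X\otimes\Gamma_{G^\lambda}(\chi)$. Next I would resolve $\OO_X^{\lambda>0}$-part: the structure sheaf of $X^{\lambda\geq 0}\subset X$ inside the $G^{\lambda\geq 0}$-equivariant picture has a Koszul resolution by $\Lambda^\bullet\bigl((X^{\lambda>0})^\vee\bigr)$, but here we actually go the other way — we push forward $\OO_{X^{\lambda\geq 0}}$, and the relevant Koszul-type complex comes from expressing the induction $R\mathrm{Ind}_{G^{\lambda\geq 0}}^G\OO_{X^{\lambda\geq 0}}$; concretely, $p_{\lambda*}q_\lambda^*\OO_{X^\lambda}$ is computed by the (dual) Koszul complex whose terms are $\OO_X\otimes\Lambda^{|J|}$ of the negative-weight subspace of $\mathfrak g\oplus X$, indexed by multisets $J\subset\{\beta\in\mathcal V\mid\langle\lambda,\beta\rangle<0\}$ together with negative roots. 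Twisting everything by $\Gamma_{G^\lambda}(\chi)$ and then applying Borel--Weil--Bott for the pair $(G^{\lambda\geq 0},G)$ to each graded piece $\OO_X\otimes\Gamma_{G^\lambda}(\chi-\sigma_J)$ produces exactly the right-hand side: the Weyl-shifted dominant conjugate $(\chi-\sigma_J)^+$, the cohomological shift by the length $\ell(J)$ of the Weyl element doing the shifting, and the homological shift by $|J|$ coming from the Koszul term. The differential $d$ on the left is inherited from the Koszul differential.

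Concretely the steps are: (1) factor $p_\lambda$ and identify $q_\lambda^*$ on the given sheaf; (2) write $p_{\lambda*}$ as derived induction $R\mathrm{Ind}_{G^{\lambda\geq 0}}^G$ twisted by $\OO_X$; (3) build the Koszul-type complex resolving the induction of $\OO_{X^{\lambda\geq 0}}$, with terms indexed by multisets $J$ in the negative-weight weights of $X$ (the contribution of the negative roots of $\mathfrak g$ being absorbed into the Bott computation on $G/G^{\lambda\geq 0}$); (4) apply Borel--Weil--Bott termwise to turn $\OO_X\otimes\Gamma_{G^\lambda}(\chi-\sigma_J)$ into $\OO_X\otimes\Gamma_G\bigl((\chi-\sigma_J)^+\bigr)[-\ell(J)]$, vanishing when no dominant shifted conjugate exists; (5) collect terms and track the two shifts into the stated $[|J|-\ell(J)]$, and record that the differential is the induced one. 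This is precisely the content of \cite[Section 3.2]{hls}, so the cleanest route is to cite that and spell out the translation to the present notation.

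The main obstacle is purely bookkeeping rather than conceptual: getting the shift conventions exactly right — in particular, reconciling the homological grading of the Koszul complex (the $|J|$) with the cohomological grading coming from higher cohomology of line bundles on $G/G^{\lambda\geq 0}$ (the $-\ell(J)$), and making sure the sign/shift conventions for $\rho$, the Weyl-shifted action $w*\chi$, and the antidominant vs.\ dominant chamber match those fixed in Subsection~\ref{sss1}. One must also be careful that $J$ ranges over \emph{multisets} (weights of $X$ with multiplicity) and that the vanishing clause "$(\chi-\sigma_J)^+=0$ otherwise" correctly encodes Bott cancellation. Once these conventions are pinned down, each graded piece is a direct Borel--Weil--Bott computation and the quasi-isomorphism follows.
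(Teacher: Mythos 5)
The paper gives no proof of Proposition \ref{bbw}: it is quoted directly from \cite[Section 3.2]{hls}, and your route (pull back along $q_\lambda$, resolve $\mathcal{O}_{X^{\lambda\geq 0}}$ by the Koszul complex, then push forward along the flag bundle $X/G^{\lambda\geq 0}\to X/G$ and apply Borel--Weil--Bott termwise, tracking the shifts $[|J|]$ and $[-\ell(J)]$) is exactly the argument behind that citation, so your proposal matches the paper's approach of simply invoking \cite{hls}. The only slip is the passing claim that the Koszul resolution is by $\Lambda^\bullet\bigl((X^{\lambda>0})^\vee\bigr)$: since $X^{\lambda\geq 0}\subset X$ is cut out by the coordinates of $X^{\lambda<0}$, the resolution is by $\Lambda^\bullet\bigl((X^{\lambda<0})^\vee\bigr)$, whose weights $-\sigma_J$ with $J$ a multiset in $\{\beta\in\mathcal{V}\mid\langle\lambda,\beta\rangle<0\}$ are what you in fact use in the remainder of the argument.
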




\subsection{Semiorthogonal decompositions}\label{sod}
Let $R$ be a set. Consider a set $O\subset R\times R$ such that for any $i, j\in R$ we have $(i,j)\in O$, or $(j,i)\in O$, or both $(i,j)\in O$ and $(j,i)\in O$. 

Let $\mathbb{T}$ be a dg-category. We will construct semiorthogonal decompositions
\[\mathbb{T}=\langle \mathbb{A}_i \mid i \in R \rangle\] with summands 
dg-subcategories $\mathbb{A}_i$ indexed by $i\in R$
such that for any $i,j\in R$ with $(i, j)\in O$ and objects $\mathcal{A}_i\in\mathbb{A}_i$, $\mathcal{A}_j\in\mathbb{A}_j$, we have 
\[\Hom_{\mathbb{T}}(\mathcal{A}_i,\mathcal{A}_j)=0.
\]

\subsection{Matrix factorizations}\label{MF}
Let $\mathcal{X}$ be a smooth and let
$w \colon \mathcal{X} \to \mathbb{C}$ be a regular function. We denote by 
\begin{align*}
    \mathrm{MF}(\mathcal{X}, w)
\end{align*}
the category of matrix factorizations of $w$. 
It consists of objects 
$\left(\alpha \colon F\rightleftarrows G\colon \beta\right)$
with $F, G \in \mathrm{Coh}(\mathcal{X})$
such that both of $\alpha \circ \beta$
and $\beta \circ \alpha$ are multiplications by $w$. 
We refer to \cite[Subsection 2.6]{PT0} for 
details about categories of matrix factorizations.

\subsection{Categories of generators}\label{ss2}

\subsubsection{}\label{ss:Ddelta}
For $w \in \mathbb{Z}$, 
we denote by $D^b(\mathcal{X}(d))_w$
the subcategory of $D^b(\mathcal{X}(d))$
consisting of objects of
weight $w$ with respect to the diagonal 
cocharacter $1_d$ of $T(d)$.  
We have the direct sum decomposition 
\begin{align*}
    D^b(\mathcal{X}(d))=\bigoplus_{w\in \mathbb{Z}}
    D^b(\mathcal{X}(d))_w. 
\end{align*}
We define the dg-subcategories 
\begin{align*}
    \mathbb{M}(d) \subset D^b(\X(d)), \ 
    (\mbox{resp. }
    \mathbb{M}(d)_w \subset D^b(\X(d))_w)
\end{align*}
to be generated 
by the vector bundles $\OO_{\X(d)}\otimes \Gamma_{GL(d)}(\chi)$, where $\chi$ is a dominant weight of $T(d)$ such that
\begin{equation}\label{M}
    \chi+\rho\in \mathbf{W}(d), \ 
    (\mbox{resp. } \chi+\rho \in \mathbf{W}(d)_w). 
    \end{equation}
    Note that $\mathbb{M}(d)$
    decomposes into the direct sum of $\mathbb{M}(d)_w$
    for $w \in \mathbb{Z}$. 
    Moreover, taking the tensor product with 
    the determinant character $\det \colon GL(d) \to \mathbb{C}^{\ast}$
    gives an equivalence
    \begin{align}\label{equiv:periodic}
    \otimes \det \colon 
    \mathbb{M}(d)_w \stackrel{\sim}{\to} \mathbb{M}(d)_{d+w}. 
    \end{align}
    Let $\mu\in \mathbb{R}$ and let $\delta:=\mu\sigma_d\in M(d)_\mathbb{R}$.
    We define the dg-subcategories \begin{align*}\mathbb{D}(d; \delta) \subset D^b(\X(d)), \ (\mbox{resp. }\mathbb{D}(d; \delta)_w \subset D^b(\X(d))_w)\end{align*} generated by the vector bundles $\OO_{\X(d)}\otimes \Gamma_{GL(d)}(\chi)$, where $\chi$ is a dominant weight of $T(d)$ such that \begin{equation}\label{MM}\chi+\rho+\delta\in \mathbf{V}(d),\ (\mbox{resp. } \chi+\rho +\delta \in \mathbf{V}(d)_w).  \end{equation} Note that $\mathbb{D}(d; \delta)$ decomposes into the direct sum of $\mathbb{D}(d; \delta)_w$ for $w \in \mathbb{Z}$ defined as above. 
    We will use the following proposition proved in~\cite{PT0}:
    \begin{prop}\emph{(\cite[Proposition~3.9]{PT0})}\label{prop:sodD}
    There is a semiorthogonal decomposition 
    \begin{align*}
        \mathbb{D}(d; \delta)=\left\langle \boxtimes_{i=1}^k \mathbb{M}(d_i)_{w_i} \right\rangle. 
    \end{align*}
    Here, the right hand side is after all partitions $d_1+\cdots+d_k=d$
    such that, by setting 
    $v_i:=w_i+d_i(\sum_{j>i}d_j-\sum_{j<i}d_j)$,
    we have 
    \begin{align*}
        -1-\mu \leq \frac{v_1}{d_1}<\cdots<\frac{v_k}{d_k} \leq -\mu.
    \end{align*}
       Moreover, each fully-faithful functor 
       $\boxtimes_{i=1}^k \mathbb{M}(d_i)_{w_i} \to \mathbb{D}(d; \delta)$
       is given by the restriction of the categorical Hall product (\ref{prel:hall}). 
    \end{prop}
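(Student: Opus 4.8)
The plan is to establish the four defining features of the claimed decomposition directly for the collection of functors $\ast=p_{\lambda\ast}q_\lambda^\ast$ attached, as in Subsection~\ref{paco}, to the partitions $d_1+\cdots+d_k=d$ together with integers $w_i$ for which, setting $v_i=w_i+d_i(\sum_{j>i}d_j-\sum_{j<i}d_j)$, one has $-1-\mu\le v_1/d_1<\cdots<v_k/d_k\le-\mu$. Concretely I would check: (a) each $\ast$ sends $\boxtimes_{i=1}^k\mathbb{M}(d_i)_{w_i}$ into $\mathbb{D}(d;\delta)$; (b) each $\ast$ is fully faithful on $\boxtimes_{i=1}^k\mathbb{M}(d_i)_{w_i}$; (c) the $\Hom$-vanishing between the images holds in the order prescribed by the slopes $v_i/d_i$; and (d) these images generate $\mathbb{D}(d;\delta)$. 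Since both sides are generated by the explicit vector bundles $\mathcal{O}\otimes\Gamma(\chi)$ indexed by dominant weights in the polytopes $\mathbf{W}$, $\mathbf{V}$ and their $1_d$-slices, all four reduce to combinatorics of such weights, the dictionary being the Minkowski-sum presentations \eqref{W}--\eqref{V} and the identification $\bigoplus_iM(d_i)\cong M(d)$ of Subsection~\ref{id}. Note in particular that $\ast$ preserves the $1_d$-grading, sending the tuple $(w_i)_i$ to $\sum_iw_i=\sum_iv_i$ (these agree by antisymmetry of the correction in $(i,j)$), so all four statements can be run one $1_d$-weight at a time.

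For (a) and (d) the engine is Proposition~\ref{bbw} applied with $X=R(d)$ and $\lambda$ the antidominant cocharacter \eqref{take:lambda}: it writes $\ast(\boxtimes_i\mathcal{O}_{\X(d_i)}\otimes\Gamma(\chi_i))$ as a finite complex with terms shifts of $\mathcal{O}_{\X(d)}\otimes\Gamma((\chi-\sigma_J)^+)$, where $\chi=(\chi_i)_i$ and $J$ ranges over multisets of the $\lambda$-negative weights of $R(d)$, namely $(\beta_a-\beta_b)^{\times 3}$ with $\mathrm{block}(a)>\mathrm{block}(b)$. For (a): if $\chi_i+\rho\in\mathbf{W}(d_i)_{w_i}$ for each $i$, then $\chi+\rho=\sum_i(\chi_i+\rho)+\big(\rho_d-\sum_i\rho_{d_i}\big)$, the cross-block correction $\rho_d-\sum_i\rho_{d_i}=\tfrac12\sum_{\mathrm{block}(b)<\mathrm{block}(a)}(\beta_a-\beta_b)$ together with any $\sigma_J$ gets absorbed into the factor $\tfrac32\text{sum}[0,\beta_a-\beta_b]$ of $\mathbf{V}(d)$, and the remaining translation (of total $1_d$-weight $\mu d+\sum_iw_i$) is realized inside the factor $\text{sum}[-\beta_k,0]$; the way this factor distributes among the blocks, combined with the cross-block correction, converts the tuple $(w_i)$ into the slopes $(v_i/d_i)$, and the extreme bounds $-1-\mu\le v_1/d_1$ and $v_k/d_k\le-\mu$ are exactly what keeps the distribution inside $\text{sum}[-\beta_k,0]$. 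Hence $(\chi-\sigma_J)^++\rho+\delta\in\mathbf{V}(d)$, proving (a). For (d) one runs this in reverse by induction on $d$, using the periodicity equivalence \eqref{equiv:periodic} to restrict to a finite range of $1_d$-weights: a generator $\mathcal{O}\otimes\Gamma(\chi)$ of $\mathbb{D}(d;\delta)$ either lies in $\mathbb{M}(d)_w$ (the $k=1$ term) or, according to the position of $\chi+\rho+\delta$ relative to the walls of $\mathbf{V}(d)$, appears as the $J=\emptyset$ term of $\ast(\boxtimes_i\mathcal{O}\otimes\Gamma(\chi_i))$ for suitable $\mathbb{M}(d_i)_{w_i}$-generators, all other terms being bundles $\mathcal{O}\otimes\Gamma((\chi-\sigma_J)^+)$ which by the inductive hypothesis (on a complexity measuring distance to the walls of $\mathbf{V}(d)$) already lie in $\langle\ast\text{-images}\rangle$; a dévissage then places $\mathcal{O}\otimes\Gamma(\chi)$ there too.

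For (b), fully faithfulness follows by computing the right adjoint of $\ast$ via Proposition~\ref{bbw} and showing that its composite with $\ast$ is the identity on $\boxtimes_{i=1}^k\mathbb{M}(d_i)_{w_i}$: for $F=\boxtimes\mathcal{O}\otimes\Gamma(\chi_i)$ with $\chi_i+\rho\in\mathbf{W}(d_i)_{w_i}$, the only term that can receive a nonzero map from another such generator is $F$ itself, because the $w_i$ are fixed and the polytopes $\mathbf{W}(d_i)_{w_i}$ are thin in the root directions, so the remaining terms carry weights falling outside them. For (c), one computes $\Hom_{\mathbb{D}(d;\delta)}$ between $\ast(\boxtimes_i\mathcal{A}_i)$ for a partition with slopes $(v_i/d_i)$ and $\ast(\boxtimes_j\mathcal{A}'_j)$ for one with slopes $(v'_j/d'_j)$, again via the adjunction and Proposition~\ref{bbw}; the outcome is built from bundles $\mathcal{O}\otimes\Gamma(\psi)$ none of whose $\psi$ admit a dominant $\rho$-shifted conjugate whenever the two partitions are ordered oppositely to the linear order on their slopes inside $[-1-\mu,-\mu]$, so all such $\Hom$'s vanish; this matching is what determines the relation $O$ of Subsection~\ref{sod}.

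The main obstacle is the combinatorial core of (c) and (d): one must prove that every dominant weight in each $1_d$-slice $\mathbf{V}(d)_w$ is uniquely attributed to a partition with strictly increasing slopes in $(-1-\mu,-\mu)$, up to the boundary cases, and that the induction in (d) terminates, i.e., that the correction terms produced by Proposition~\ref{bbw} are strictly closer to the defining walls in a well-founded sense. The boundary equalities $v_1/d_1=-1-\mu$ and $v_k/d_k=-\mu$ are the subtle points, where the estimates underlying (b) and (c) degenerate to equalities; this is exactly why $\mathbf{W}(d)$ is taken closed in the root directions while the $1_d$-direction of $\mathbf{V}(d)$ is cut with closed bounds, and it is where the normalizations of $\rho$ and of the shift $\delta=\mu\sigma_d$ must be tracked precisely.
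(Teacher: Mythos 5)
The paper does not reprove this statement---it is quoted directly from \cite[Proposition~3.9]{PT0}---and your outline is essentially the strategy of that cited proof (and of the parallel argument for the DT/PT quiver carried out in Section~\ref{sec:DT/PTlocal} here): images of Hall products resolved via Proposition~\ref{bbw}, weight estimates for semiorthogonality and fully-faithfulness, and generation by a well-founded induction. The combinatorial core you isolate at the end (unique attribution of each dominant weight of $\mathbf{V}(d)_w$ to a slope-ordered partition, the converse inclusion, and the treatment of the boundary equalities $v_1/d_1=-1-\mu$, $v_k/d_k=-\mu$) is exactly what is recalled from \cite{PT0} in Subsection~\ref{dectree2}, so your plan follows the same route, though it remains a sketch at precisely those points.
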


\subsubsection{}\label{defcategories}
We fix $\mu\in \mathbb{R}$
and set $\delta :=  \mu \sigma_d$. 
We define the dg-subcategories 
\begin{align*}
    \mathbb{M}^a(1, d; \delta)
    \subset D^b(\X^a(1, d)), \ 
    \mbox{(resp. } \mathbb{F}^a(1, d; \delta)
\subset D^b(\X^{af}(1, d)) )
    \end{align*} to be generated by the vector bundles $\OO_{\X^a(1,d)}\otimes \Gamma_{GL(d)}(\chi)$ (resp.~
    $\OO_{\X^{af}(1, d)}\otimes \Gamma_{GL(d)}(\chi)$), where $\chi$ is a dominant weight of $T(d)$ such that
\begin{equation}\label{m1d}
    \chi+\rho+\delta\in \mathbf{W}^a(1, d).
    \end{equation}
    We define the dg-subcategories
\begin{align}\label{subcat:D}
    \mathbb{D}^a(1, d; \delta)
    \subset D^b(\X^a(1, d)), \ 
    \mbox{(resp. } \mathbb{E}^a(1, d; \delta)
\subset D^b(\X^{af}(1, d)) )
    \end{align}
    to be generated by the vector bundles $\OO_{\X^a(1, d)}\otimes \Gamma_{GL(d)}(\chi)$
    (resp.~
    $\OO_{\X^{af}(1, d)}\otimes \Gamma_{GL(d)}(\chi)$),
    where $\chi$ is a dominant weight of $T(d)$ such that
\begin{equation}\label{D}
    \chi+\rho+\delta\in \mathbf{V}^a(1, d).
    \end{equation}
 Note that for the projection $b \colon \X^{af}(1, d) \to \X^a(1, d)$,
 the pull-back $b^{\ast}$ restricts to the functors 
 \begin{align*}
 b^{\ast} \colon \mathbb{M}^a(1, d; \delta) \to \mathbb{F}^a(1, d; \delta),\,
     b^{\ast} \colon \mathbb{D}^a(1, d; \delta) \to \mathbb{E}^a(1, d; \delta)
 \end{align*}
 whose essential images generate $\mathbb{F}^a(1, d; \delta)$ and $\mathbb{E}^a(1, d; \delta)$, respectively.

\subsubsection{}\label{gradingMF}
Let $\Tr W$ be the regular function \begin{equation*}\label{def:Wd}
    \Tr W:=\Tr X[Y, Z]\colon \X(d)=\mathfrak{gl}(d)^{\oplus 3}/GL(d)\to \mathbb{C}.
\end{equation*}
We define the subcategory
\[\mathbb{S}(d):=\text{MF}(\mathbb{M}(d), \Tr W)
\subset \mathrm{MF}(\mathcal{X}(d), \Tr W)
\] 
to be the subcategory of matrix factorizations
$\left(\alpha \colon F\rightleftarrows G\colon \beta\right)$ with $F$ and $G$ in $\mathbb{M}(d)$.
It decomposes into the direct sum of 
$\mathbb{S}(d)_w$ for $w \in \mathbb{Z}$, 
where $\mathbb{S}(d)_w$
is defined similarly to $\mathbb{S}(d)$
using $\mathbb{M}(d)_w$. 
There is an equivalence:
\begin{align}\label{equiv:periodic2}
    \otimes \det \colon 
    \mathbb{S}(d)_w \stackrel{\sim}{\to} \mathbb{S}(d)_{d+w}. 
    \end{align}
The category $\mathbb{S}(d)_w$ is called the \textit{quasi-BPS category} of $\mathbb{C}^3$ for $(d,w)\in \mathbb{N}\times\mathbb{Z}$ in~\cite{PT0}, see \cite{PT1} for its properties and computations of its K-theory. 


\subsection{More on weights}

\subsubsection{}\label{prime} 

Let $A$ be a partition $(d_i, w_i)_{i=1}^k$ of $(d, w)$ and consider its corresponding antidominant cocharacter $\lambda$. Define the weights
\begin{align*}
    \chi_A:=\sum_{i=1}^k w_i\tau_{d_i},\
    \chi'_A:=\chi_A+\mathfrak{g}^{\lambda>0}.
\end{align*}
Consider weights $\chi'_i\in M(d_i)_{\mathbb{R}}$ such that
\[\chi'_A=\sum_{i=1}^k \chi'_i.\]
 Let $v_i$ be the sum of coefficients of $\chi'_i$ for $1\leq i\leq k$; alternatively, $v_i:=\langle 1_{d_i}, \chi'_i\rangle$. We denote the above transformation by
  \begin{align}\label{trans:A}
     A\mapsto A', \ 
     (d_i, w_i)_{i=1}^k\mapsto (d_i, v_i)_{i=1}^k.
 \end{align}
Explicitly, the weights $v_i$ for $1\leq i\leq k$ are given by 
\begin{align}\label{w:prime}
    v_i=w_i+d_i\left(\sum_{j>i}d_j -\sum_{j<i}d_j \right). 
\end{align}
This transformation explains how to change weights under the Koszul equivalence \cite[Proposition 3.1]{P2}.

\subsubsection{}\label{dectree2}

We recall a construction from \cite{PT0}, see \cite[Subsections 3.3.2 and 3.3.3 and Propositions 3.4 and 3.7]{PT0} which is also used in the proof of Proposition \ref{prop:sodD}. Let $\mu\in \mathbb{R}$, and let $\chi$ be a dominant weight such that $\chi+\rho+\mu\sigma_d\in \mathbf{V}(d)$. Then there exists a unique partition $(d_i)_{i=1}^k$ of $d$, integers $(w_i)_{i=1}^k$ such that for $v_i$ as in \eqref{w:prime}, we have
\begin{equation}\label{ineq}
    -1-\mu\leq \frac{v_1}{d_1}<\cdots<\frac{v_k}{d_k}\leq -\mu,
\end{equation}
and weights $\chi_i \in M(d_i)$ such that
\[\chi=\sum_{i=1}^k\chi_i\text{ with }\chi_i+\rho_i+\mu\sigma_{d_i}\in \mathbf{W}(d_i)_{w_i}\text{ for }1\leq i\leq k.\]
Here, we denote by $\rho_i$ half the sum of positive roots of $GL(d_i)$ for $1\leq i\leq k$.

The converse is also true, and it follows from \cite[Proposition 3.8]{PT0}. Consider a partition $(d_i)_{i=1}^k$ of $d$, integers $(w_i)_{i=1}^k$ such that for $v_i$ as in \eqref{w:prime}, the inequality \eqref{ineq} holds, and weights $\chi_i$ with $\chi_i+\rho_i+\mu\sigma_{d_i}\in \mathbf{W}(d_i)_{w_i}$ for $1\leq i\leq k$. Let $\chi:=\sum_{i=1}^k \chi_i$. Then we have 
\[\chi+\rho+\mu\sigma_d\in\mathbf{V}(d).\]

\section{The categorical wall-crossing formula for DT/PT quivers} 
\label{sec:DT/PTlocal}

Fix $a\geq 1$. Let $\chi_0 \colon GL(V) \to \mathbb{C}^{\ast}$ be the determinant character
$g \mapsto \det g$. Recall the varieties: 
\begin{align*}
    I^a(d)&:=R^{af}(1, d)^{\chi_0\text{-ss}}/GL(d),\\
    P^a(d)&:=R^{af}(1, d)^{\chi^{-1}_0\text{-ss}}/GL(d).
\end{align*}
\begin{remark}\label{rmk:moduli}
It is easy to describe the $\chi_0^{\pm}$-semistable loci on $R^{af}(1, d)$: 
a $\chi_0$-semistable representation
(resp.~$\chi_0^{-1}$-semistable representation)
consists of tuples
\begin{align*}
(u_1, \ldots, u_{a+1}, v_1, \ldots, v_a, A, B, C) \in V^{\oplus (a+1)} \oplus (V^{\vee})^{\oplus a}
\oplus \mathfrak{gl}(V)^{\oplus 3}
\end{align*}
such that (see~\cite[Lemma~7.10]{Toddbir})
\begin{align*}
    \mathbb{C}\langle A, B, C \rangle (u_1, \ldots, u_{a+1})=V, 
    \ (\mathrm{resp}.~\mathbb{C}\langle A, B, C \rangle(v_1, \ldots, v_a)=V^{\vee}).
\end{align*}
\end{remark}

The main result of this section is Theorem \ref{thm2bis}.
The most important ingredient in its proof is the following semiorthogonal decomposition for 
subcategories of $D^b(\X^a(1, d))$:

\begin{thm}\label{thm2}
    Let $\mu\in \mathbb{R}$ and
    consider the weight $\delta:=\mu\sigma_d\in M(d)_{\mathbb{R}}$. 
    There is a semiorthogonal decomposition
    \begin{equation}\label{SOD32}
    \mathbb{D}^a(1, d; \delta)=\Big\langle \left(\boxtimes_{i=1}^k \mathbb{M}(d_i)_{w_i}\right)\boxtimes \mathbb{M}^a\left(1, d'; \delta'\right) \Big\rangle.
     \end{equation}
    The right hand side is after all $d'\leq d$, all decompositions $\sum_{i=1}^{k} d_i=d-d'$, and all integers $w_i\in \mathbb{Z}$ for $1\leq i\leq k$ such that for
    \begin{equation}\label{transvw}
        v_i:=w_i+d_i\left(d'+\sum_{j>i} d_j-\sum_{j<i}d_j\right),
    \end{equation} we have
    \begin{equation}\label{boundsalpha2}
    -1-\mu-\frac{a}{2}\leq \frac{v_1}{d_1}<\cdots<\frac{v_k}{d_k}\leq -\mu-\frac{a}{2},
\end{equation}
    and where $\delta':=\left(\mu-d+d'\right)\sigma_{d'}$.
    The orthogonality of the categories is as in Subsections \ref{comparison}.
    The functors \[\left(\boxtimes_{i=1}^k \mathbb{M}(d_i)_{w_i}\right)\boxtimes \mathbb{M}^a\left(1, d'; \delta'\right)\to \mathbb{D}^a(1, d; \delta)\] are given by the Hall product \eqref{prel:halla}.
\end{thm}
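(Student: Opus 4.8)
The plan is to deduce Theorem~\ref{thm2} from the already available semiorthogonal decomposition of $\mathbb{D}(d;\delta)$ in Proposition~\ref{prop:sodD} by a Koszul-type reduction that trades the extra summand $V^{\oplus a}\oplus(V^\vee)^{\oplus a}$ in $R^a(1,d)$ for a shift of the weight-window parameter. Concretely, $\X^a(1,d)=\X(d)\times(V^{\oplus a}\oplus(V^\vee)^{\oplus a})/GL(d)$, and the summand $V^{\oplus a}\oplus(V^\vee)^{\oplus a}$ is self-dual with $T(d)$-weights $(\pm\beta_k)^{\times a}$. The key point is that the generators $\OO_{\X^a(1,d)}\otimes\Gamma_{GL(d)}(\chi)$ are pulled back from $\X(d)$, so the subcategory $\mathbb{D}^a(1,d;\delta)$ is the image of a window subcategory under the flat pullback along $\X^a(1,d)\to\X(d)$; the polytope $\mathbf{V}^a(1,d)$ is exactly $\mathbf{V}(d)$ enlarged by the Minkowski summand $\tfrac a2\operatorname{sum}[-\beta_k,\beta_k]$ coming from this extra self-dual representation, and likewise $\mathbf{W}^a(1,d)$ is $\mathbf{W}(d)$ enlarged by the same summand. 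So the first step is to make precise that pulling back along this affine-bundle-type map is fully faithful on the relevant window categories and intertwines the Hall products in \eqref{prel:hall} and \eqref{prel:halla}.

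Second, I would run the combinatorial heart of the argument: given a dominant $\chi$ with $\chi+\rho+\delta\in\mathbf{V}^a(1,d)$, I want to produce a unique partition $(d_i)_{i=1}^k$ of some $d-d'$, integers $w_i$, and a residual weight on $\X^a(1,d')$, exactly paralleling the decomposition recalled in Subsection~\ref{dectree2}. The mechanism is the same: the extra Minkowski summand $\tfrac a2\operatorname{sum}[-\beta_k,\beta_k]$ is invariant under the Weyl group and shifts the $\tau$-coordinate window by $a/2$, which is precisely why the bounds \eqref{boundsalpha2} read $-1-\mu-\tfrac a2\le v_1/d_1<\cdots<v_k/d_k\le -\mu-\tfrac a2$ rather than the bounds $-1-\mu\le\cdots\le-\mu$ of Proposition~\ref{prop:sodD}; the substitution $\delta'=(\mu-d+d')\sigma_{d'}$ records the twist picked up by the last factor under the partial Hall product with cocharacter $\lambda'=(\lambda,1_{d'})$ via the formula \eqref{w:prime}/\eqref{transvw}. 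So I would carefully track the additive contribution of $\mathfrak g^{\lambda\ge 0}$ and of the new self-dual weights to the $1_{d_i}$-pairings, showing the decomposition tree of \cite{PT0} still terminates and is unique, now with a ``PT remainder'' $\mathbb{M}^a(1,d';\delta')$ at the bottom instead of terminating at $d'=0$.

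Third, for semiorthogonality and generation I would use the Borel--Weil--Bott computation of Proposition~\ref{bbw} applied to the cocharacters $\lambda'$ of the partitions: $p_{\lambda'*}q_{\lambda'}^*$ of a box-tensor generator is a complex of $\OO_{\X^a(1,d)}\otimes\Gamma_{GL(d)}((\chi-\sigma_J)^+)[\cdots]$ over multisets $J$ of negative $\lambda'$-weights, and the weight window $\mathbf{V}^a(1,d)$ is designed so that all such $(\chi-\sigma_J)^+$ again lie in the window — this gives that the Hall products land in $\mathbb{D}^a(1,d;\delta)$ and that distinct pieces are right-orthogonal in the prescribed order. Generation follows because the decomposition tree exhausts all window generators. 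The main obstacle I expect is bookkeeping at the boundary: the polytopes $\mathbf{W}^a$ are \emph{half-open} (the summand is $\tfrac a2\operatorname{sum}(-\beta_k,\beta_k]$, not closed), whereas $\mathbf{V}^a$ uses the closed summand, so one must verify that the strict-versus-nonstrict inequalities in \eqref{boundsalpha2} match the open/closed faces correctly and that no generator is double-counted or lost at a wall; this is exactly the place where, as the authors note in the introduction, the naive polytope decomposition of \cite[Theorem~1.1]{PT0} fails to give the desired SOD and a finer decomposition of the $\tau$-direction is needed. Once that boundary analysis is pinned down, the theorem follows by combining the uniqueness of the tree (generation + semiorthogonality) with Proposition~\ref{bbw} (fully-faithfulness of each Hall product), and Theorem~\ref{thm2bis} itself then drops out by passing to the $\chi_0^{\pm}$-semistable loci and invoking the window/Kempf--Ness equivalences $\mathbb{M}^a(1,d;\delta)\simeq D^b(I^a(d))$ and the analogous statement for $P^a(d)$.
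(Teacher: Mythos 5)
Your outline reproduces the paper's general architecture (pulled-back vector bundle generators, Hall products, Proposition~\ref{bbw}, window categories), but the actual engine of the proof is missing. The heart of the paper's argument is the new chamber decomposition of $\mathbf{V}^a(1,d)$ in Proposition~\ref{prop34}: every strictly dominant $\chi\in\mathbf{V}^a(1,d)$ lies, for a \emph{unique} $e\leq d$, in $\left(-\frac{a+3f}{2}\sigma_e+\mathbf{V}(e)\right)+\left(\frac{3e}{2}\sigma_f+\mathbf{W}^a(1,f)\right)$, with the coefficients $c_i$ on the first $e$ coordinates pinned to $[-\frac{a+2}{2},-\frac{a}{2}]$ and the cross coefficients $c_{ij}$, $j\leq e<i$, saturated at $\frac32$. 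This is not obtained by ``the same mechanism'' as the decomposition tree of \cite{PT0} with a window shifted by $a/2$: the Minkowski summand $\frac{a}{2}\mathrm{sum}[-\beta_k,\beta_k]$ enlarges the polytope symmetrically rather than translating it, and the paper stresses that the PT0-style decomposition does not produce \eqref{SOD32}. The shift by $a/2$ in \eqref{boundsalpha2} and the twist $\delta'=(\mu-d+d')\sigma_{d'}$ emerge only from Proposition~\ref{prop34} (followed by applying Proposition~\ref{prop:sodD} to the $\mathbf{V}(e)$-factor), and the existence and uniqueness of this decomposition require their own minimization/compactness arguments. Without it you have no well-defined ``type'' $S(\chi)$ of a generator, hence no index set and no order for the semiorthogonal decomposition.

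Relatedly, your step 3 does not prove orthogonality. Showing that $(\chi-\sigma_J)^+$ stays in the enlarged window only yields the containment statement (the analogue of Proposition~\ref{prop:inside}); it gives no vanishing of Homs between distinct summands. In the paper, Proposition~\ref{prop34} is used to define the invariants $e(\chi)$ and $p(\chi)$, which determine the order on types in Subsection~\ref{comparison} and feed the strict weight comparison $\langle\tau_e,(\chi'-\sigma_I)^+\rangle>\langle\tau_e,\chi\rangle$ of Proposition~\ref{prop37}; this single inequality, combined with \cite[Proposition~4.2]{P} and Proposition~\ref{bbw}, powers both the semiorthogonality (Proposition~\ref{prop311}) and the induction on $(-p(\chi),e(\chi))$ that proves generation (Proposition~\ref{generationprop}). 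Your proposal contains no analogue of these invariants or of the comparison. Finally, step 1 is incorrect as stated: pullback along the vector-bundle projection $\X^a(1,d)\to\X(d)$ is not fully faithful (the pushforward of $\mathcal{O}$ is a symmetric algebra, so Hom spaces grow), and $\mathbb{D}^a(1,d;\delta)$ is not the pullback image of any window entering Proposition~\ref{prop:sodD}; the paper makes no such reduction, working instead with generators on $\X^a(1,d)$ directly and invoking Proposition~\ref{prop:sodD} only to decompose the $\mathbf{V}(e)$-factor after Proposition~\ref{prop34} is in place.
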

Given $\mu\in\mathbb{R}$ and a partition $(d_i)_{i=1}^k$ of $d'\leq d$, we refer to \eqref{boundsalpha2} as a condition on the tuple of integers $(w_i)_{i=1}^k$.


\subsection{Weight decompositions}

We discuss several preliminary results about decompositions of weights, or, alternatively, about decompositions of polytopes in $M(d)_{\mathbb{R}}$. 

\begin{prop}\label{prop35PT0}
Let $\chi$ be a strictly dominant weight such that $\chi\in \mathbf{V}^a(1, d)$. 
    There exists a decomposition
\begin{equation}\label{decomm}
D: \chi=\sum_{1\leq j<i\leq d} c_{ij}(\beta_i-\beta_j)+\sum_{i=1}^d c_i\beta_i
\end{equation}
with coefficients
\begin{equation}\label{coeffdec}
0\leq c_{ij}\leq \frac{3}{2} \text{ and } -\frac{a+2}{2}\leq c_i\leq \frac{a}{2} \text{ for }1\leq j<i\leq d. 
\end{equation}
\end{prop}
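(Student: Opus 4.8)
The plan is to exhibit the decomposition \eqref{decomm} explicitly by starting from the defining Minkowski-sum structure of the polytope $\mathbf{V}^a(1,d)$ and then repairing the coefficients to meet the bounds \eqref{coeffdec}. Since $\chi\in\mathbf{V}^a(1,d)$, by definition there exist coefficients $0\le c_{ij}^{\circ}\le \tfrac32$ (for all $1\le i,j\le d$, using the segments $[0,\beta_i-\beta_j]$), coefficients $0\le e_k\le \tfrac{a}{2}$ from the segments $\tfrac{a}{2}[-\beta_k,\beta_k]$ (more precisely a contribution $f_k\beta_k$ with $-\tfrac{a}{2}\le f_k\le \tfrac{a}{2}$), and coefficients $-1\le g_k\le 0$ from the segments $[-\beta_k,0]$, such that
\[
\chi=\sum_{1\le i,j\le d} c_{ij}^{\circ}(\beta_i-\beta_j)+\sum_{k=1}^d f_k\beta_k+\sum_{k=1}^d g_k\beta_k.
\]
The first step is to collapse the double sum $\sum_{i,j}c_{ij}^{\circ}(\beta_i-\beta_j)$, in which both orders $(i,j)$ and $(j,i)$ occur, into a sum over $j<i$ only: writing $(\beta_j-\beta_i)=-(\beta_i-\beta_j)$ we get a net coefficient $c_{ij}^{\circ}-c_{ji}^{\circ}\in[-\tfrac32,\tfrac32]$ on $(\beta_i-\beta_j)$ for $j<i$, but this can be negative, so it does not yet have the form \eqref{decomm}. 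The key observation is the standard telescoping identity
\[
\beta_i-\beta_j=\sum_{j\le l<i}(\beta_{l+1}-\beta_l)\qquad (j<i),
\]
together with its negative, which lets us trade a $-(\beta_i-\beta_j)$ for a chain of adjacent differences $(\beta_l-\beta_{l+1})$; and, crucially, each $(\beta_l-\beta_{l+1})$ can in turn be rewritten using the roots as needed. The cleaner route is: use that $\chi$ is \emph{strictly} dominant to argue that, after absorbing the $\beta_k$-contributions $f_k+g_k$ into the root part, the coefficients $c_i$ of the $\beta_i$ can be chosen with the asymmetric bounds $-\tfrac{a+2}{2}\le c_i\le \tfrac{a}{2}$ — the lower bound $-\tfrac{a+2}{2}=-\tfrac{a}{2}-1$ coming precisely from combining the worst case $f_k=-\tfrac{a}{2}$ with $g_k=-1$, and the upper bound $\tfrac{a}{2}$ from $f_k=\tfrac{a}{2}$, $g_k=0$.

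Concretely, the second step is a bookkeeping argument on $\mathbb{Z}^d$ (or $\mathbb{R}^d$) with the basis $\beta_1,\dots,\beta_d$: any vector whose partial sums $c_{>l}:=\sum_{i>l}(\text{coeff of }\beta_i)$ lie in a controlled range can be written as a non-negative combination of the positive roots $\beta_i-\beta_j$ ($j<i$) plus a residual multiple of the $\beta_i$'s, and the bound $0\le c_{ij}\le\tfrac32$ on the root coefficients is governed by how far the original $c_{ij}^{\circ}$'s already were from being "one-directional". I expect this to reduce to: given the symmetric data with $|c_{ij}^{\circ}-c_{ji}^{\circ}|\le\tfrac32$, one can re-solve for non-negative $c_{ij}\le\tfrac32$ by the usual "sorting/transport" argument (each unit of negative coefficient on $(\beta_i-\beta_j)$ is pushed to the $\beta$-part via $-(\beta_i-\beta_j)=\beta_j-\beta_i$, which is where the extra slack $-1$ on the lower bound of $c_i$ gets used, not the root coefficients). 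This is essentially the argument behind \cite[Proposition 3.5]{PT0} (hence the label \texttt{prop35PT0}), adapted to the presence of the $\tfrac{a}{2}[-\beta_k,\beta_k]$ and $[-\beta_k,0]$ factors.

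The main obstacle will be the interplay between the root coefficients $c_{ij}$ and the $\beta_k$-coefficients $c_i$ when we eliminate the redundant $(j,i)$ terms: naively collapsing $c_{ij}^{\circ}(\beta_i-\beta_j)+c_{ji}^{\circ}(\beta_j-\beta_i)$ can force a root coefficient outside $[0,\tfrac32]$, and the fix — rerouting the excess through $\beta_j-\beta_i=-(\beta_i-\beta_j)$ and then absorbing $\pm\beta_i,\mp\beta_j$ into the $c_i$'s — must be done so that the asymmetric window $[-\tfrac{a+2}{2},\tfrac{a}{2}]$ is never violated for \emph{any} index simultaneously. Strict dominance of $\chi$ is what guarantees enough room to do this coherently across all indices; I would make this precise by an induction on $d$ (peeling off $\beta_d$ first, using strict dominance to control the $\beta_d$-coefficient, then applying the inductive hypothesis to the projection to $M(d-1)_{\mathbb{R}}$), which I expect to close the argument without any genuinely hard estimate — the content is purely the correct choice of which slack absorbs which error.
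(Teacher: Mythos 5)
Your setup is correct: membership in $\mathbf{V}^a(1,d)$ gives $\chi=\sum_{i\neq j}c^{\circ}_{ij}(\beta_i-\beta_j)+\sum_k(f_k+g_k)\beta_k$ with $c^{\circ}_{ij}\in[0,\tfrac32]$, $f_k\in[-\tfrac a2,\tfrac a2]$, $g_k\in[-1,0]$, and the whole content of the proposition is to eliminate the negative roots $\beta_j-\beta_i$ ($j<i$) without breaking the window $[-\tfrac{a+2}{2},\tfrac a2]$ on the $\beta$-coefficients. But you have only restated that this is the difficulty, not resolved it. The single sentence where the proof should be --- ``Strict dominance of $\chi$ is what guarantees enough room to do this coherently across all indices'' --- is asserted, not argued, and the hedges (``I expect this to reduce to'', ``I would make this precise by'') confirm that the key estimate is missing. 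Note that already for $d=2$, $a=0$ the claim is equivalent to a small but genuine inequality: if the net root coefficient $c=c^{\circ}_{21}-c^{\circ}_{12}$ is negative, pushing $-c(\beta_2-\beta_1)$ into the $\beta$-part changes $c_1$ by $-c>0$ and $c_2$ by $c<0$, and one must use strict dominance to show the results stay in $[-1,0]$; this works but requires a computation, and for general $d$ the corrections interact across all pairs simultaneously.

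Your proposed repair --- induction on $d$ by peeling off $\beta_d$ --- has a concrete problem: after removing the index $d$, the roots $\beta_d-\beta_j$ contribute $-c_{dj}\beta_j$ to the remaining coordinates, so the projected weight lands in $\mathbf{V}^a(1,d-1)+\sum_j[-\tfrac32,0]\beta_j$ rather than in $\mathbf{V}^a(1,d-1)$, and it need not remain strictly dominant; the inductive hypothesis does not apply as stated. The paper avoids all of this by citing the argument of \cite[Proposition~3.5]{PT0}, which (as in Step~1 of the proof of Proposition~\ref{prop34} here) is a compactness--perturbation argument: one takes the compact set of all decompositions with the constrained coefficients, minimizes a continuous defect functional, and shows that at a minimizer the defect must vanish, because otherwise strict dominance of $\chi$ forces one of finitely many explicit $\varepsilon$-perturbations (trading mass between a root coefficient and two $\beta$-coefficients) to strictly decrease the defect. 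If you want a self-contained proof rather than the citation, that global variational scheme is the mechanism you need to supply; the local ``transport'' picture by itself does not close.
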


\begin{proof}
    The argument used in \cite[Proposition 3.5]{PT0} also applies here.
\end{proof}

\begin{prop}\label{prop34}
 	Let $\chi$ be a strictly dominant weight such that $\chi\in \mathbf{V}^a(1, d)$. Then there exists a unique $e\leq d$ such that 
 	\begin{equation}\label{chisumstrong}
 		\chi=\sum_{j<i\leq e}c_{ij}(\beta_i-\beta_{j})+\sum_{e<j<i}c_{ij}(\beta_i-\beta_j)+\sum_{j\leq e<i}\frac{3}{2}(\beta_i-\beta_j)+\sum_{1\leq i\leq d}c_i\beta_i,
 	\end{equation}
 	where $c_{ij}$ and $c_i$ satisfy
 	\begin{align*}
 		0\leq c_{ij} \leq \frac{3}{2} \mbox{ for } j<i, \ 
 		-\frac{a+2}{2} \leq c_i \leq -\frac{a}{2} \mbox{ if }i \leq e, \ 
 		-\frac{a}{2}<c_i \leq \frac{a}{2} \mbox{ if } i>e. 
 		\end{align*} 	
 	Let $f:=d-e$.
 	Alternatively, we have that
 	\begin{equation}\label{chisum}
 		\chi\in \left(-\frac{a+3f}{2}\sigma_e+\mathbf{V}(e)\right)+\left(\frac{3e}{2}\sigma_f+\mathbf{W}^a(1,f)\right).
 	\end{equation}
 \end{prop}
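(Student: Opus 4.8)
The plan is to derive Proposition~\ref{prop34} from Proposition~\ref{prop35PT0} by an extremization over decompositions of $\chi$, in the same spirit as the (commented) proof of Proposition~\ref{prop35PT0} and of \cite[Proposition~3.4]{PT0}. First I would record that \eqref{chisumstrong} is merely \eqref{chisum} rewritten: collecting the crossing terms gives
\[
\sum_{j\le e<i}\tfrac32(\beta_i-\beta_j)=\tfrac{3e}{2}\,\sigma_f-\tfrac{3f}{2}\,\sigma_e,
\]
where $\sigma_e=\sum_{j\le e}\beta_j$ and $\sigma_f=\sum_{i>e}\beta_i$ under the identification $M(d)\cong M(e)\oplus M(f)$ of Subsection~\ref{id}; substituting $c_i=-\tfrac a2+c_i'$ with $c_i'\in[-1,0]$ for $i\le e$, the part of \eqref{chisumstrong} supported on the first $e$ coordinates lands in $-\tfrac{a+3f}{2}\sigma_e+\mathbf V(e)$ and the part supported on the last $f$ coordinates lands in $\tfrac{3e}{2}\sigma_f+\mathbf W^a(1,f)$. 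So it is enough to produce a decomposition of the shape \eqref{chisumstrong} and to see that its length $e$ is forced.

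For existence I would start from a decomposition $\chi=\sum_{j<i}c_{ij}(\beta_i-\beta_j)+\sum_i c_i\beta_i$ as in Proposition~\ref{prop35PT0}; running the full argument it refers to (\cite[Proposition~3.5]{PT0}) in fact also makes the coefficients non-decreasing, so I may assume $-\tfrac{a+2}{2}\le c_1\le\cdots\le c_d\le\tfrac a2$ and $0\le c_{ij}\le\tfrac32$. Let $e$ be the number of indices $i$ with $c_i\le-\tfrac a2$; then $c_i\in[-\tfrac{a+2}{2},-\tfrac a2]$ for $i\le e$ and $c_i\in(-\tfrac a2,\tfrac a2]$ for $i>e$. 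It remains to improve this to have $c_{ij}=\tfrac32$ whenever $j\le e<i$. For this I would run a second extremization: over the (compact, nonempty) set of decompositions of $\chi$ satisfying \eqref{coeffdec} together with the threshold condition ``$c_i\le-\tfrac a2$ exactly when $i\le e$'', maximize $\sum_{j\le e<i}c_{ij}$. At a maximizer, if some crossing coefficient has $c_{ij}<\tfrac32$, I would increase it by $\varepsilon$ and compensate either by $c_i\mapsto c_i-\varepsilon$, $c_j\mapsto c_j+\varepsilon$, or, when that would push $c_j$ above $-\tfrac a2$ or otherwise violate the bounds, by an $\varepsilon$-move routed through an intermediate index $j<l<i$ exactly as in the case analysis of \cite[Proposition~3.5]{PT0}; with $\varepsilon$ small enough that no bound and no part of the threshold condition is violated, this strictly increases $\sum_{j\le e<i}c_{ij}$, contradicting maximality. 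As in \cite[Proposition~3.5]{PT0}, it is the strict dominance of $\chi$ that guarantees there is room for such a move whenever some crossing coefficient is not yet $\tfrac32$.

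For uniqueness of $e$, I would use the shape \eqref{chisumstrong} directly: for $i\le e$ the $\beta_i$-coefficient $a_i$ of $\chi$ satisfies $a_i\le-\tfrac{a+3f}{2}+\tfrac32(i-1)$ (since $c_{ij}\le\tfrac32$, $c_i\le-\tfrac a2$), while for $i>e$ one has $a_i>\tfrac{3e}{2}-\tfrac a2-\tfrac32(d-i)$. Comparing the bounds at $i=e$ and $i=e+1$ gives $a_{e+1}-a_e>3$ and, after the purely intrinsic substitution $b_i:=a_i+\tfrac{a+3d+3}{2}$, the inequalities $b_e\le 3e<3(e+1)<b_{e+1}$. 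I would then check, using that $\chi$ is strictly dominant, that at most one $e$ can satisfy these together with the actual existence of a decomposition of the form \eqref{chisumstrong}; alternatively this is the uniqueness already contained in \cite[Subsection~3.3.2]{PT0}.

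The step I expect to be the main obstacle is the second extremization in the existence argument: one must replay the delicate perturbation analysis of \cite[Proposition~3.5]{PT0} while additionally preserving the separation of the two index blocks, i.e.\ verifying that the small moves (in particular those routed through intermediate indices) never spoil the condition ``$c_i\le-\tfrac a2$ exactly when $i\le e$''. The equivalence of the two formulations and the uniqueness of $e$ are, by contrast, routine bookkeeping.
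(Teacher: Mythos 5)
Your existence argument has a genuine gap, and it sits exactly at the step you flagged. You fix the block size $e$ by counting the indices with $c_i\le -\tfrac a2$ in one (ordered) decomposition coming from Proposition~\ref{prop35PT0}, and then try to force all crossing coefficients to $\tfrac32$ by maximizing $\sum_{j\le e<i}c_{ij}$ subject to keeping that threshold pattern. But the count of indices with $c_i\le-\tfrac a2$ is not an invariant of $\chi$: different admissible decompositions give different values of $e$, only one of which can carry a decomposition of the shape \eqref{chisumstrong}, and if you start with the wrong one the constrained maximization gets stuck with no legal perturbation --- the claim that strict dominance always leaves room for a move is false. Concretely, take $d=2$, $a=1$, $\chi=-\beta_1+\tfrac12\beta_2$. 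This is strictly dominant and lies in $\mathbf{V}^1(1,2)$, and $c_{21}=0$, $c_1=-1$, $c_2=\tfrac12$ is a decomposition satisfying \eqref{coeffdec} with non-decreasing residual coefficients, giving $e=1$; yet the shape \eqref{chisumstrong} with $e=1$ would force the $\beta_1$-coefficient of $\chi$ to be $c_1-\tfrac32\le -2$, so it cannot exist (the correct value is $e=0$, realized by $c_{21}=\tfrac34$, $c_1=c_2=-\tfrac14$). With $e=1$ your maximization tops out at $c_{21}=\tfrac12$, where $c_1=-\tfrac12$ sits exactly on the threshold and, since $d=2$, there is no intermediate index to route a move through. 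The paper avoids this by a single global extremization that determines the block structure and the crossing coefficients simultaneously: over all decompositions as in Proposition~\ref{prop35PT0} (no ordering of the $c_i$ assumed), it minimizes $\sigma(D)=\sum_{c_i>-a/2}\bigl(c_i+\tfrac a2\bigr)$ and, among minimizers, maximizes $\lvert\{i: c_i>-\tfrac a2\}\rvert$; perturbations at the optimum give $c_{ij}=\tfrac32$ for crossing pairs in $I\times J$ and $c_{ij}=0$ for those in $J\times I$, and only then is strict dominance used to show that $J$ is an initial segment $\{1,\dots,e\}$. Your proposal inverts this order (first fix the initial segment, then fix the crossings), and that inversion is what breaks.

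The uniqueness sketch is also weaker than needed. The per-coordinate bounds you compare at $i=e$ and $i=e+1$ (even if stated for all $i$) only show that two admissible values $e'<e$ satisfy $e\ge e'+2$; they do not produce a contradiction for, say, $e'=0$, $e=2$. The appeal to \cite[Subsection~3.3.2]{PT0} does not apply either, since that uniqueness concerns the polytope $\mathbf V(d)$, not $\mathbf V^a(1,d)$. The paper instead sums the $\beta_i$-coefficients of $\chi$ over the block $e'<i\le e$ and evaluates this sum using both putative decompositions: the within-block cross terms cancel exactly, and the strict inequality $c_i>-\tfrac a2$ (upper block for $e'$) against $c_i\le-\tfrac a2$ (lower block for $e$) yields a strict bound contradicting a non-strict one. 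Your reduction of \eqref{chisumstrong} to \eqref{chisum} is fine, but both halves of the main statement need the sharper arguments above.
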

 
\begin{step}
There exists such $e\leq d$.
\end{step}
 \begin{proof}
 	Consider a decomposition 
 	\begin{align}\label{decom:D2}
 		D\colon \chi=\sum_{1\leq j<i\leq d} c_{ij}(\beta_i-\beta_j)+\sum_{1\leq i\leq d} c_i\beta_i
 	\end{align}
 	as in Proposition \ref{prop35PT0}.
 	A choice of the decomposition $D$ corresponds to a point in the fiber of the 
 	following continuous map at $\chi \in M(d)$:
 	\begin{align*}
 		\gamma \colon 
 		\left[0, \frac{3}{2}  \right]^{\times d(d-1)/2} \times \left[ -\frac{a+2}{2}, \frac{a}{2} \right]^{\times d}
 		\to M(d)_{\mathbb{R}}
 	\end{align*}
 	defined by sending $(c_{ij}, c_i)$ to the right hand side in (\ref{decom:D2}).  
For each decomposition $D$, we set 
\begin{align}\label{def:IJ}
	I:=\left\{i \relmiddle| c_i >-\frac{a}{2}\right\}, \ J:=\left\{i \relmiddle| c_i \leq -\frac{a}{2}\right\}. 
	\end{align}
 	Define the function $\sigma\colon \gamma^{-1}(\chi)\to\mathbb{R}$:
 	\[\sigma(D):=\sum_{i\in I}\left(c_i+\frac{a}{2}\right).\]
 	The function $\sigma$ is continuous and the set $\gamma^{-1}(\chi)$ is compact. 
 	Among all decompositions $D$ for which $\sigma$ is minimal, choose one for which $|I|$ is as large as possible. Let $m:=\sigma(D)$.

 	We first show that 
 	\begin{align}\label{contra1}
 		c_{ij}=\frac{3}{2} \mbox{ if }j<i, (i, j) \in I \times J. 
 		\end{align}
 	Assume there exists $j<i$ such that $(i, j) \in I \times J$ and $c_{ij}<3/2$. 
 	We take $0<\varepsilon \ll 1$ and set
 	$c_i'=c_i-\varepsilon$, $c_j'=c_j+\varepsilon$, $c_{ij}'=c_{ij}+\varepsilon$. 
  	Consider the decomposition $D'$ with the same coefficients as $D$ except for $c'_{ij}$, $c'_i$, and $c'_j$. 
 	Then $D'$ is a decomposition of $\chi$ because 
 	\[c_{ij}(\beta_i-\beta_j)+c_i\beta_i+c_j\beta_j=(c_{ij}+\varepsilon)(\beta_i-\beta_j)+(c_i-\varepsilon)\beta_i+(c_j+\varepsilon)\beta_j.\] 
 	The decomposition $D'$ satisfies \eqref{coeffdec}. If $c'_j<-a/2$, then we may assume that $I'=I$ and then $\sigma(D')<\sigma(D)$. If $c'_j=-a/2$, then $I'=I\cup\{j\}$ and $\sigma(D')=\sigma(D)$, which contradicts the maximality of $|I|$ among all decompositions with $\sigma(D)=m$. Thus
 	(\ref{contra1}) holds. 
 	
 	Similarly, 
 	we show that 
 		\begin{align}\label{contra2}
 		c_{ij}=0 \mbox{ if }j<i, (i, j) \in J \times I. 
 	\end{align}
 	Assume there exists $i>j$ such that $(i, j) \in J \times I$ and $c_{ij}>0$. 
 	We take $0<\varepsilon \ll 1$ and set  
 $c_i'=c_i+\varepsilon$, $c_j'=c_j-\varepsilon$, 
 $c'_{ij}=c_{ij}-\varepsilon$. 
 	Consider the decomposition $D'$
 	with the same coefficients as $D$ except for $c'_{ij}$, $c'_i$, and $c'_j$. Then $D'$ is a decomposition of $\chi$ because 
 	\[c_{ij}(\beta_i-\beta_j)+c_i\beta_i+c_j\beta_j=(c_{ij}-\varepsilon)(\beta_i-\beta_j)+(c_i+\varepsilon)\beta_i+(c_j-\varepsilon)\beta_j\] and $D'$ satisfies \eqref{coeffdec}. If $c_i<-a/2$, then we can choose $\varepsilon$ such that $I'=I$ and then $\sigma(D')<\sigma(D)$. 
 	If $c_i=-a/2$, then $I'=I\cup\{i\}$ and $\sigma(D')=\sigma(D)$, which contradicts the maximality of $|I|$ among decompositions $D$ with $\sigma(D)=m$. 
 	Thus (\ref{contra2}) holds.


 	We denote $f=|I|$, $e=|J|=d-f$, and write \[\chi=\sum_{i=1}^d b_i\beta_i, \ 
 	b_i=\sum_{j<i}c_{ij}-\sum_{i<j}c_{ji}+c_i.\] 
 	Let $i'\in I$ and $j' \in J$. Then from (\ref{contra1}) and (\ref{contra2})
 	and noting that $c_{ij} \in [0, 3/2]$ for all $j<i$, we have
 	\begin{align}\label{eiprime}
 		b_{i'}&>-\frac{a}{2}+\frac{3}{2}|J\cap \{1,\ldots, i'-1\}|-\frac{3}{2}|I\cap\{i'+1,\ldots, d\}|, \\ \notag
  		b_{j'}&\leq -\frac{a}{2}+\frac{3}{2}|J\cap\{1,\ldots, j'-1\}|-\frac{3}{2}|I\cap \{j'+1,\ldots, d\}|.
 	\end{align}
 	We claim that \begin{equation}\label{ijprime}
 		J=\{1,\ldots, e\}\text{ and }I=\{e+1,\ldots, d\}.
 	\end{equation} 
 	Assume the claim in \eqref{ijprime} is false. Thus there exist $i'\in I$, $j'\in J$ with $i'+1=j'$. Then $b_{i'}<b_{j'}$ because $\chi$ is strictly dominant. Combining inequalities 
 	 \eqref{eiprime}, we obtain:
 	\begin{multline*}
 		-\frac{a}{2}+\frac{3}{2}|J\cap\{1,\ldots, j'-1\}|-\frac{3}{2}|I\cap \{j'+1,\ldots, d\}|\\> -\frac{a}{2}+\frac{3}{2}|J\cap \{1,\ldots, i'-1\}|-\frac{3}{2}|I\cap\{i'+1,\ldots, d\}|,\end{multline*}
 	and so 
 	\begin{equation}\label{IJpos}
 		|J\cap\{i',\ldots, j'-1\}|+|I\cap \{i'+1,\ldots, j'\}|>0.
 	\end{equation}
 	However,  $I\cap\{i'+1,\ldots, j'\}=I\cap\{j'\}=\emptyset$ and $J\cap \{i',\ldots, j'-1\}=J\cap\{i'\}=\emptyset$.
 	Thus \eqref{IJpos} is false and so the claim in \eqref{ijprime} is true. Consider the weights in $\mathbf{V}(e)$ and $\mathbf{W}^a(1, f)$, respectively:
 	\begin{align*}
 		\chi_1&=\sum_{j<i\leq e}c_{ij}(\beta_i-\beta_j)+\sum_{i\leq e}\left(c_i+\frac{a}{2}\right)\beta_i,
 		\\
 		\chi_2&=\sum_{e<j<i}c_{ij}(\beta_i-\beta_j)+\sum_{i\geq e+1}c_i\beta_i.
 	\end{align*}
 	Then 
 	\[\chi=\left(-\frac{a+3f}{2}\sigma_e+\chi_1\right)+
 	\left(\frac{3e}{2}\sigma_f+\chi_2\right).\]
 	The conclusion thus follows.
 \end{proof}

\begin{step}
The $e\leq d$ with the desired property is unique.
\end{step}
\begin{proof}
    Assume there exist $e'<e$ such that \eqref{chisum} is satisfied for $e$ and $e'$. Let $f=d-e$.
    Recall that $\chi=\sum_{i=1}^d b_i\beta_i$.
    The weight $\chi$ has a description \eqref{chisumstrong} for $e'$, so 
    \begin{align}\label{chisum1}
    \sum_{e'<i\leq e} b_{i}
&=\sum_{e'<i\leq e}\left(\sum_{e'<j <i}c_{ij}
-\sum_{i<j}c_{ji}+\frac{3}{2}e'+c_i\right) 
\\    
  \notag
  &=-\sum_{e' <i \leq e<j}c_{ji}+\sum_{e'<i\leq e}\left(\frac{3}{2}e'+c_i\right) \\
  &\notag >\left(-\frac{3}{2}f+\frac{3}{2}e'-\frac{1}{2}a\right)(e-e').
    \end{align}
    
    Next, $\chi$ has a description \eqref{chisumstrong} for $e$.
    We abuse notation and denote the coefficients of this description also by $c_{ij}$ and $c_i$.
    Similarly to (\ref{chisum1}), we have 
    \begin{align}\label{chisum2}
     \sum_{e'<i\leq e} b_{i}
&=\sum_{e'<i\leq e}\left(-\sum_{i<j\leq e}c_{ji}
+\sum_{j<i}c_{ij}-\frac{3}{2}f+c_i\right) 
\\    
  \notag
  &=-\sum_{j\leq e' <i \leq e}c_{ij}+\sum_{e'<i\leq e}\left(-\frac{3}{2}f+c_i\right) \\
  &\notag \leq \left(\frac{3}{2}e'-\frac{3}{2}f-\frac{1}{2}a\right)(e-e').
    \end{align}
    We obtain a contradiction by comparing \eqref{chisum1} and \eqref{chisum2}.
\end{proof}

The decomposition of $\mathbf{V}^a(1, 2)$
into chambers constructed in Proposition~\ref{prop34} is depicted in Figure~2. 

	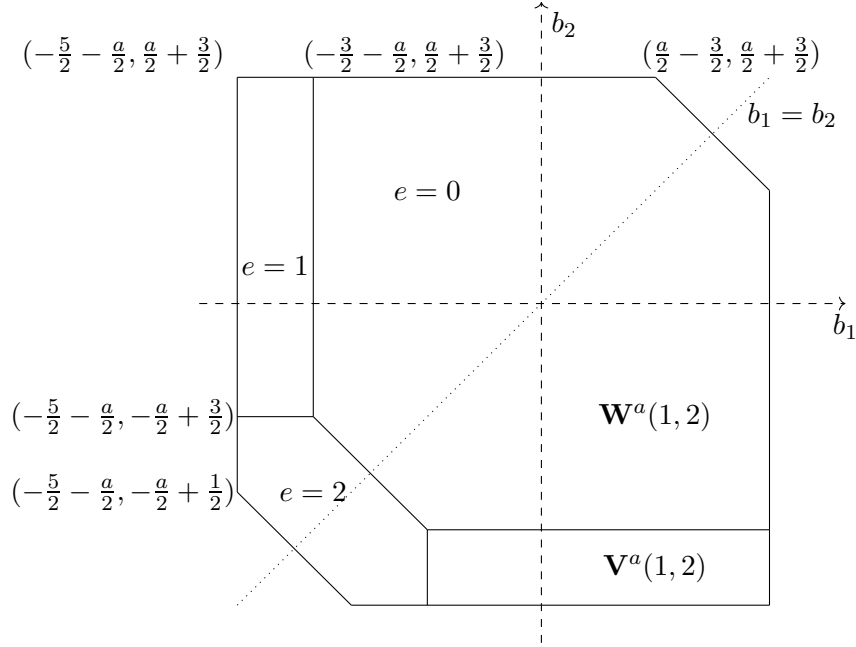
\begin{figure}[h]
	\begin{align*}
	\begin{tikzpicture}
		\draw (-3, 3) -- (1.5, 3);
		\draw (1.5, 3) -- (3, 1.5);
		\draw (3, 1.5) -- (3, -3);
		\draw (-3, 3) -- (-3, -1.5);
		\draw (-3, -1.5) -- (-1.5, -3);
		\draw (-1.5, -3) -- (3, -3);
		\draw (-4, 3) -- (-3, 3);	
		\draw (-4, 3) -- (-4, -2.5);		
		\draw (-4, -2.5) -- (-2.5, -4);
		\draw (-2.5, -4) -- (3, -4);
		\draw (3, -4) -- (3, -3);
		\draw (-3, -1.5) -- (-4, -1.5);
		\draw (-1.5, -3) -- (-1.5, -4);	
		\draw[->, dashed] (-4.5, 0) -- (4, 0);
		\draw[->, dashed] (0, -4.5) -- (0, 4);
		\draw (-1.5,1.5) node[circle] {$e=0$};
		\draw (-3.5,0.5) node[circle] {$e=1$};
		\draw (4,-0.3) node[circle] {$b_1$};
		\draw (0.3,3.7) node[circle] {$b_2$};
		\draw (3.3,2.5) node[circle] {$b_1=b_2$};		
		\draw (-3,-2.5) node[circle] {$e=2$};
		\draw (2.5,3.3) node[circle] {$(\frac{a}{2}-\frac{3}{2}, \frac{a}{2}+\frac{3}{2})$};
		\draw (-1.8,3.3) node[circle] {$(-\frac{3}{2}-\frac{a}{2}, \frac{a}{2}+\frac{3}{2})$};
		\draw (-5.5,3.3) node[circle] {$(-\frac{5}{2}-\frac{a}{2}, \frac{a}{2}+\frac{3}{2})$};
		\draw (-5.5,-1.5) node[circle] {$(-\frac{5}{2}-\frac{a}{2}, -\frac{a}{2}+\frac{3}{2})$};
		\draw (-5.5,-2.5) node[circle] {$(-\frac{5}{2}-\frac{a}{2}, -\frac{a}{2}+\frac{1}{2})$};
		\draw (1.5,-1.5) node[circle] {$\mathbf{W}^{a}(1, 2)$};
		\draw (1.5,-3.5) node[circle] {$\mathbf{V}^{a}(1, 2)$};
		\draw[dotted] (-4, -4) -- (3, 3);		
	\end{tikzpicture}
\end{align*}
	\caption{Decomposition of $\mathbf{V}^a(1, 2)$}
	\end{figure}

\subsection{Invariants of weights}

  \subsubsection{} 
Let $e\leq d$ be two natural numbers and let $f=d-e$. Consider the antidominant cocharacter \[\tau_e:=(\overbrace{t, \ldots, t}^{e}, \overbrace{1, \ldots, 1}^{f}): \mathbb{C}^*\to T(d).\] 
Fix $\mu\in \mathbb{R}$ and let $\delta:=\mu \sigma_d$.
    Let $\chi$ be a dominant weight such that $\chi+\rho+\delta\in \mathbf{V}^a(1, d)$. Let $e=e(\chi)$ be the integer $0\leq e\leq d$ such that
    \begin{align}\label{write:chi}
               \chi+\rho+\delta=\sum_{j<i\leq e}c_{ij}(\beta_i-\beta_{j})+\sum_{e<j<i}c_{ij}(\beta_i-\beta_j)+\sum_{j\leq e<i}\frac{3}{2}(\beta_i-\beta_j)+\sum_{1\leq i\leq d}c_i\beta_i,
            \end{align}
    where $c_{ij}$ and $c_i$ are as in 
    Proposition~\ref{prop34}. 
  We define $p(\chi)$ to be 
  \begin{align*}
        p(\chi)&:=\langle \tau_e, \chi+\rho+\delta\rangle+\frac{3}{2}ef+\frac{a}{2}e
        =\sum_{i=1}^e \left(c_i+\frac{a}{2} \right) 
        \leq 0. 
         \end{align*}
         The functions $e(\chi), p(\chi)$ depend on $\mu$.

 \subsubsection{}\label{typeweight}  
 
The following is a consequence of Proposition \ref{prop34} and the discussion in Subsection \ref{dectree2}.

\begin{prop}\label{prop:chi}
    Let $\mu\in \mathbb{R}$, let $\delta=\mu\sigma_d$, and let $\chi$ be a dominant (integral) weight such that $\chi+\rho+\delta\in \mathbf{V}^a(1, d)$. Then there exists a unique $d'\leq d$, partition $(d_i)_{i=1}^k$ of $e:=d-d'$, integers $(w_i)_{i=1}^k$ satisfying \eqref{boundsalpha2}, and weights $\chi_i\in M(d_i)$, $\chi'\in M(d')$
    such that $\chi=\chi_1+\cdots+\chi_k+\chi'$
    and 
     \begin{align*}
        \chi_i+\rho_i&\in \mathbf{W}(d_i)_{w_i}, \ 
        \chi'+\rho'+(\mu-e)\sigma_{d'}\in \mathbf{W}^a(1, d'). 
    \end{align*}
       In the above, $\rho_i$ and $\rho'$ are
       half the sums of positive roots of $GL(d_i)$ and
       $GL(d')$, respectively, where $1\leq i\leq k$. 
\end{prop}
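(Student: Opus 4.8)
The plan is to combine the two--block decomposition of a strictly dominant weight in $\mathbf{V}^a(1, d)$ furnished by Proposition~\ref{prop34} with the decomposition of a weight of $\mathbf{V}(e)$ into $\mathbf{W}$--blocks recalled in Subsection~\ref{dectree2} (in the form in which it enters Proposition~\ref{prop:sodD}). Throughout, weight lattices of Levi subgroups are identified with sublattices of $M(d)$ as in Subsection~\ref{id}.

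First I would set $\widetilde{\chi} := \chi + \rho + \delta$. Since $\chi$ is dominant and integral, $\rho$ has strictly increasing coefficients with consecutive differences equal to $1$, and $\delta = \mu\sigma_d$ is Weyl--invariant, the weight $\widetilde{\chi}$ is strictly dominant; and $\widetilde{\chi} \in \mathbf{V}^a(1, d)$ by hypothesis. Applying Proposition~\ref{prop34} to $\widetilde{\chi}$ produces a unique integer $e \leq d$; set $d' := d - e$, so that $e = d - d'$ as in the statement. Proposition~\ref{prop34} furthermore gives the containment \eqref{chisum} (with $f = d'$),
\[
\widetilde{\chi} \in \Big( -\tfrac{a + 3d'}{2}\sigma_e + \mathbf{V}(e) \Big) + \Big( \tfrac{3e}{2}\sigma_{d'} + \mathbf{W}^a(1, d') \Big),
\]
in which the first summand is supported on the first $e$ simple roots and the second on the last $d'$ simple roots.

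The next step is to split $\widetilde{\chi}$ along the Levi subgroup $GL(e) \times GL(d') \subset GL(d)$. Write $\chi = \chi^{(e)} + \chi^{(d')}$ for the decomposition into the first $e$ and the last $d'$ coordinates; each piece is dominant because $\chi$ is. Denoting by $\rho_e$ half the sum of the positive roots of $GL(e)$ and using the standard identity $\rho = \rho_e + \rho' + \tfrac{e}{2}\sigma_{d'} - \tfrac{d'}{2}\sigma_e$ together with $\delta = \mu\sigma_e + \mu\sigma_{d'}$, the containment above separates into
\begin{align*}
\chi^{(e)} + \rho_e + \big( \mu + \tfrac{a}{2} + d' \big)\sigma_e &\in \mathbf{V}(e), \\
\chi^{(d')} + \rho' + (\mu - e)\sigma_{d'} &\in \mathbf{W}^a(1, d').
\end{align*}
The second line is exactly the required condition on $\chi' := \chi^{(d')}$. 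To treat the first line, I would apply the decomposition recalled in Subsection~\ref{dectree2} to the dominant $GL(e)$--weight $\chi^{(e)}$, with parameter $\mu_1 := \mu + \tfrac{a}{2} + d'$. This produces a unique partition $(d_i)_{i=1}^k$ of $e$ and weights $\chi_i \in M(d_i)$ with $\chi^{(e)} = \sum_{i=1}^k \chi_i$ and $\chi_i + \rho_i \in \mathbf{W}(d_i)_{w_i}$; since every element of $\mathbf{W}(d_i)_{w_i}$ pairs to $w_i$ against the diagonal cocharacter $1_{d_i}$, and $\langle 1_{d_i}, \chi_i + \rho_i \rangle = \langle 1_{d_i}, \chi_i \rangle \in \mathbb{Z}$, each $w_i$ is automatically an integer. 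By construction $\chi = \chi_1 + \cdots + \chi_k + \chi'$.

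It then remains to recover the bounds \eqref{boundsalpha2}. Subsection~\ref{dectree2} with parameter $\mu_1$ yields
\[
-1 - \mu_1 \leq \frac{\widetilde{v}_1}{d_1} < \cdots < \frac{\widetilde{v}_k}{d_k} \leq -\mu_1, \qquad \widetilde{v}_i := w_i + d_i\Big( \sum_{j > i} d_j - \sum_{j < i} d_j \Big),
\]
the sums being over the partition $(d_i)_{i=1}^k$ of $e$. Comparing with \eqref{transvw} gives $v_i = \widetilde{v}_i + d' d_i$; dividing by $d_i$ and adding $d'$ to each side of the chain above converts it into precisely \eqref{boundsalpha2}, since $-\mu_1 + d' = -\mu - \tfrac{a}{2}$. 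Finally, uniqueness of the data $\big(d',\, (d_i)_{i=1}^k,\, (w_i)_{i=1}^k,\, (\chi_i)_{i=1}^k,\, \chi'\big)$ follows from the uniqueness of $e$ in Proposition~\ref{prop34} together with the uniqueness in Subsection~\ref{dectree2}.

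I expect the only genuine difficulty to be the bookkeeping of the various linear shifts: the $\sigma$--translations built into Proposition~\ref{prop34} and Subsection~\ref{dectree2}, the shift arising from restricting $\rho + \delta$ to the Levi subgroup, and the partition--dependent correction entering the transformation $w_i \mapsto v_i$ of \eqref{transvw}, all have to compose so as to land exactly on the twist $(\mu - e)\sigma_{d'}$ for the $\mathbf{W}^a$--block and exactly on the bounds \eqref{boundsalpha2} for the remaining blocks, with the $w_i$ coming out integral. None of this is conceptually deep, but the interplay between $e$ and $d'$ and the various signs have to be handled with some care.
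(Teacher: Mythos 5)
Your proposal is correct and follows exactly the route the paper intends (the paper only records the one-line remark that the statement "is a consequence of Proposition \ref{prop34} and the discussion in Subsection \ref{dectree2}"): apply Proposition \ref{prop34} to $\chi+\rho+\delta$ to split off the $\mathbf{W}^a(1,d')$ block, then apply the decomposition of Subsection \ref{dectree2} to the $\mathbf{V}(e)$ block with the shifted parameter $\mu_1=\mu+\tfrac{a}{2}+d'$, and your Levi bookkeeping ($\rho=\rho_e+\rho'-\tfrac{d'}{2}\sigma_e+\tfrac{e}{2}\sigma_{d'}$, the twist $(\mu-e)\sigma_{d'}$, and $v_i=\widetilde{v}_i+d'd_i$ converting the bounds into \eqref{boundsalpha2}) is accurate. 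The uniqueness argument via the uniqueness of $e$ in Proposition \ref{prop34} and the uniqueness (plus the converse direction) in Subsection \ref{dectree2} is also the intended one.
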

Let $\mu\in \mathbb{R}$, let $\delta=\mu\sigma_d$, and let $\chi$ be a dominant weight such that $\chi+\rho+\delta\in \mathbf{V}^a(1, d)$. Let $S(\chi)=S$ be the ordered collection of pairs $S=(d_i, w_i)_{i=1}^k$
satisfying the condition in Proposition~\ref{prop:chi}. We say $S$ is the \textit{type} of $\chi$.
Then $p(\chi)$ is alternatively written as 
\begin{align}\label{pSchi}
p(\chi)&=\sum_{1\leq i\leq k}w_i-(d-d')\left(-\mu-d'-\frac{a}{2}\right) \\
\notag &=\sum_{1\leq i\leq k}v_i-(d-d')\left(-\mu-\frac{a}{2}\right)\leq 0.
\end{align}
We also denote the above number by $p(S)$.

\subsubsection{Comparison of ordered pairs}\label{comparison} 
Fix $d\in \mathbb{N}$ and $\mu\in \mathbb{R}$.
Let $R$ be the set \[R=\{(d_i, w_i)_{i=1}^k\mid d_i\in \mathbb{N}, w_i \in \mathbb{Z}\}.\] 
We define a subset $O\subset R\times R$ depending on $\mu$. A pair of types $(S', S)$ is in $O$ if either
\begin{itemize}
    \item $p(S')>p(S)$, or
    \item $p(S')=p(S)$ and $\sum_{i=1}^{k'} d'_i<\sum_{i=1}^{k}d_i$, or
    \item $p(S')=p(S)$, and $\sum_{i=1}^{k'} d'_i=\sum_{i=1}^{k}d_i$, and 
    $(S', S)$ is in the set $O$ from \cite[Subsection 3.4]{PT0}. 
\end{itemize}
If $(S', S)$ is in $O$, we also write $S>S'$.

For $0<\varepsilon\ll 1$, let $\mu=-a/2-\varepsilon$.
Recall the definition \eqref{transvw} of the integers $v_i$ from $w_i$. Then \[p(S)=\sum_{i=1}^k v_i-\varepsilon \sum_{i=1}^k d_i.\] In this case, the set $O$ contains pairs of types $(S', S)$ such that
\begin{itemize}
	    \item $\sum_{i=1}^{k'} v'_i >\sum_{i=1}^{k} v_i$, or 
	    \item $\sum_{i=1}^{k'} v'_i =\sum_{i=1}^{k} v_i$
	    and $\sum_{i=1}^{k'} d'_i<\sum_{i=1}^{k} d_i$, or 
	    \item $\sum_{i=1}^{k'} v'_i =\sum_{i=1}^{k} v_i$,
	    and $\sum_{i=1}^{k'} d'_i=\sum_{i=1}^{k} d_i$, and $(S', S)$
	    is in the set $O$ from~\cite[Subsection~3.4]{PT0}. 	\end{itemize}

    \subsection{Weight comparison}
    We discuss some preliminary results on weights. 
       We fix $\mu\in \mathbb{R}$ and let $\delta:=\mu\sigma_d\in M(d)_\mathbb{R}$.
    	For $\chi$ a weight and for $1\leq l\leq d$, define \begin{align*}
            p_l(\chi) :=\langle \tau_l, \chi+\rho+\delta\rangle+\frac{3}{2}l(d-l)+\frac{a}{2}l. 
                    \end{align*}	
		
    \begin{lemma}\label{lem:pl}
        Let $\chi$ be a dominant weight such that $\chi+\rho+\delta \in \mathbf{V}^a(1, d)$ and let $e=e(\chi)$. 
         Then $p_l(\chi) \geq p(\chi)$ and the inequality is strict if $l>e$. 
            \end{lemma}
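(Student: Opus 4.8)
The plan is to expand $\eta:=\chi+\rho+\delta$ in the explicit chamber decomposition furnished by Proposition~\ref{prop34}, substitute into the definitions of $p_l(\chi)$ and $p(\chi)$, and reduce the inequality to termwise sign estimates. Concretely, I would fix the decomposition \eqref{write:chi} of $\eta$ with its constraints $0\leq c_{ij}\leq \tfrac32$, $-\tfrac{a+2}{2}\leq c_i\leq -\tfrac a2$ for $i\leq e$, and $-\tfrac a2< c_i\leq \tfrac a2$ for $i>e$. The one elementary input needed is that $\langle\tau_l,\beta_i\rangle=1$ if $i\leq l$ and $0$ otherwise, so that for $j<i$ one has $\langle\tau_l,\beta_i-\beta_j\rangle=-1$ when $j\leq l<i$ and $0$ for every other such pair. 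Feeding \eqref{write:chi} into $p_l(\chi)$ then kills most of the root contributions, and the proof splits naturally into the cases $l\leq e$ and $l>e$.

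For $l\leq e$, only the roots $\beta_i-\beta_j$ with $j\leq l<i\leq e$ from the first block and the fixed $\tfrac32$-block survive (the second block contributes nothing since its $j$ satisfy $j>e\geq l$); using $\tfrac32 l(d-l)-\tfrac32 l f=\tfrac32 l(e-l)$ with $f=d-e$, I expect to land on
\[p_l(\chi)-p(\chi)=\Big(\tfrac{3}{2}l(e-l)-\sum_{j\leq l<i\leq e}c_{ij}\Big)+\sum_{i=l+1}^{e}\Big(-\tfrac a2-c_i\Big),\]
where both summands are $\geq 0$: the first because there are $l(e-l)$ coefficients $c_{ij}\leq\tfrac32$, the second because $c_i\leq-\tfrac a2$ for $i\leq e$. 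For $l>e$, only the roots $\beta_i-\beta_j$ with $e<j\leq l<i$ from the second block and the $\tfrac32$-block survive, and using $\tfrac32 l(d-l)-\tfrac32 e(d-l)=\tfrac32(l-e)(d-l)$ I expect
\[p_l(\chi)-p(\chi)=\Big(\tfrac{3}{2}(l-e)(d-l)-\sum_{e<j\leq l<i}c_{ij}\Big)+\sum_{i=e+1}^{l}\Big(c_i+\tfrac a2\Big),\]
whose first bracket is again $\geq 0$ and whose second bracket is now \emph{strictly} positive, since each $c_i+\tfrac a2>0$ for $i>e$ and the index range is nonempty because $l>e$. This yields $p_l(\chi)\geq p(\chi)$ always and strict inequality for $l>e$, as claimed.

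The only real obstacle is bookkeeping: for each value of $l$ one must correctly identify which of the three families of roots in \eqref{write:chi} contribute to $\langle\tau_l,\eta\rangle$ and with what multiplicity, and verify that the constant $\tfrac32 l(d-l)+\tfrac a2 l$ combines with the $\tfrac32 l f$ (resp.\ $\tfrac32 e(d-l)$) coming from the $\tfrac32$-block to leave precisely the clean expressions above. Once this is set up, the inequalities are immediate from the coefficient bounds of Proposition~\ref{prop34}; the only conceptual point worth highlighting is that the strictness for $l>e$ relies exactly on the strict bound $c_i>-a/2$ for $i>e$.
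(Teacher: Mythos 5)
Your proposal is correct and is essentially the paper's argument: both expand $\chi+\rho+\delta$ via \eqref{write:chi}, use $\langle\tau_l,\beta_i-\beta_j\rangle=-1$ exactly for the crossing roots to absorb their contribution into the constant $\tfrac32 l(d-l)$ (using $c_{ij}\leq\tfrac32$), and then reduce to comparing $\sum_{i\leq l}(c_i+\tfrac a2)$ with $\sum_{i\leq e}(c_i+\tfrac a2)$, with strictness for $l>e$ coming from $c_i>-\tfrac a2$ for $i>e$. The only cosmetic difference is that the paper avoids your case split $l\leq e$ versus $l>e$ by bounding all crossing coefficients uniformly and appealing to Lemma~\ref{lem:obvious} for the final comparison.
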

            \begin{proof}
  	Note that $p_e(\chi)=p(\chi)$. 
  	We write 
	$\chi+\rho+\delta$
	as (\ref{write:chi}), 
	where $c_{ij}$ and $c_i$ are as in Proposition \ref{prop34}. We have
	\begin{align*}
		p_l(\chi) &=-\sum_{i\leq l <j}c_{ji}
		+\sum_{i\leq l}c_i+\frac{3}{2}l(d-l)+\frac{a}{2}l \\
		&\geq \sum_{i\leq l}\left(c_i+\frac{a}{2}\right)
		\stackrel{(\star)}{\geq}  \sum_{i\leq e}\left(c_i+\frac{a}{2}\right)=p(\chi).
		\end{align*}
	The inequality $(\star)$ holds from Lemma~\ref{lem:obvious}, 
	and is strict if $l>e$. 
	\end{proof}
	We have used the following lemma, whose proof is 
	obvious: 
\begin{lemma}\label{lem:obvious}
	Let $x_1, \ldots, x_d \in \mathbb{R}$ and let
	$S:=\{i \mid x_i \leq 0\}\subset \{1,\ldots, d\}$. 
	Then for any subset $T \subset \{1, \ldots, d\}$, we have 
	\begin{align*}
		\sum_{i\in T}x_i \geq \sum_{i\in S}x_i.
		\end{align*}
	If equality holds, then $T \subset S$. 
If equality holds and $\lvert T \rvert =\lvert S \rvert$, 
	then $T=S$. 
		\end{lemma}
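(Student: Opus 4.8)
This is an elementary statement about finite sums of real numbers, and the plan is simply to make the "obvious" argument precise. The set $S=\{i \mid x_i \leq 0\}$ is, by construction, the set of indices on which the summand is non-positive, and $\{1,\dots,d\}\setminus S$ is exactly the set of indices on which $x_i > 0$. First I would write, for an arbitrary $T\subset\{1,\dots,d\}$,
\[
\sum_{i\in T}x_i-\sum_{i\in S}x_i=\sum_{i\in T\setminus S}x_i-\sum_{i\in S\setminus T}x_i.
\]
Every term $x_i$ with $i\in T\setminus S$ satisfies $x_i>0$ (since $i\notin S$ means $x_i>0$), and every term $x_i$ with $i\in S\setminus T$ satisfies $x_i\leq 0$, so $-x_i\geq 0$. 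Hence the right-hand side is a sum of non-negative numbers and is therefore $\geq 0$, which gives $\sum_{i\in T}x_i\geq\sum_{i\in S}x_i$.

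For the equality case, suppose $\sum_{i\in T}x_i=\sum_{i\in S}x_i$. Then the right-hand side above is zero, so each of the two non-negative sums vanishes: $\sum_{i\in T\setminus S}x_i=0$ with every term strictly positive forces $T\setminus S=\emptyset$, i.e.\ $T\subset S$. Finally, if in addition $\lvert T\rvert=\lvert S\rvert$, then $T\subset S$ together with the equality of cardinalities forces $T=S$.

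There is no real obstacle here; the only thing to be careful about is the strictness of the inequality $x_i>0$ for $i\notin S$, which is what makes $T\setminus S=\emptyset$ (rather than merely $\sum_{i\in T\setminus S}x_i=0$) follow in the equality case. This is exactly the form in which the lemma is applied in the proof of Lemma \ref{lem:pl}, where the role of $x_i$ is played by $c_i+\tfrac{a}{2}$ and the strict inequality $c_i>-\tfrac{a}{2}$ for $i>e$ from Proposition \ref{prop34} supplies the strictness.
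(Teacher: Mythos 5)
Your proof is correct, and it is exactly the "obvious" argument the paper has in mind: the paper states Lemma \ref{lem:obvious} without proof, remarking that the proof is obvious, and your decomposition of the difference over $T\setminus S$ and $S\setminus T$, with strict positivity of $x_i$ for $i\notin S$ forcing $T\setminus S=\emptyset$ in the equality case, is the standard way to make it precise.
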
	

For $e\leq d$, consider the multiset of weights $\mathcal{W}^a_e:=
    R^a(1, d)^{\tau_e<0}$, or explicitly 
    \begin{align*}
      \mathcal{W}^a_e=\{(\beta_i-\beta_j)^{\times 3}, (-\beta_j)^{\times a}\mid j\leq e<i\}. 
    \end{align*}

  \begin{prop}\label{prop37}
    Let $\chi$, $\chi'$ be dominant weights such that 
    $\chi+\rho+\delta \in \mathbf{V}^a(1, d)$ and $\chi'+\rho+\delta \in \mathbf{V}^a(1, d)$. 
    Let $e=e(\chi)$, $e'=e(\chi')$, and let $I$ be a subset $\mathcal{W}_{e'}^a$. 
    Let $e''=e((\chi'-\sigma_I)^+)$. 
    Suppose that either 
    \begin{enumerate}[(i)]
        \item $p(\chi')>p(\chi)$ or
        \item $p(\chi')=p(\chi), e'<e$ or \item $p(\chi')=p(\chi)$, $e'=e$, and $I\neq \emptyset$.
    \end{enumerate}
    Then we have 
\begin{align}\label{ineq:pl}
    p_e((\chi'-\sigma_I)^+) \geq p((\chi'-\sigma_I)^+) \geq p(\chi). 
\end{align}
Moreover, if the second inequality is an equality, then 
$p(\chi')=p(\chi)$ and $e'' \leq e'$, 
and $e'' < e'$ if $I\neq \emptyset$. 
Hence $e''<e$, and thus the 
first inequality is strict by Lemma~\ref{lem:pl}. 
In particular, in each of the above three cases, we have
\begin{align*}
\langle \tau_e, (\chi'-\sigma_I)^+ \rangle> \langle \tau_e, \chi \rangle. 
\end{align*}
     \end{prop}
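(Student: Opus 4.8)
The plan is to convert the statement into inequalities between the functionals $p$ and $p_e$, deduce the final assertion from the chain \eqref{ineq:pl}, and prove that chain by a refinement of the argument for Lemma~\ref{lem:pl}.

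\emph{Well-definedness and reduction.} First I would check that $(\chi'-\sigma_I)^+ + \rho + \delta$ again lies in $\mathbf{V}^a(1,d)$, so that $e''=e((\chi'-\sigma_I)^+)$ and $p((\chi'-\sigma_I)^+)$ make sense. The polytope $\mathbf{V}^a(1,d)$ is a Minkowski sum of segments running over complete Weyl orbits, hence $W$-invariant; since $\delta=\mu\sigma_d$ is $W$-fixed, $(\chi'-\sigma_I)^+ + \rho + \delta$ is the dominant $W$-conjugate of $\chi'+\rho+\delta-\sigma_I$, so it suffices that $\chi'+\rho+\delta-\sigma_I\in\mathbf{V}^a(1,d)$. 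This follows by writing $\chi'+\rho+\delta$ in the form of Proposition~\ref{prop34} with $e=e'=e(\chi')$ (it is strictly dominant and in $\mathbf{V}^a(1,d)$): subtracting $\sigma_I$, whose weights lie in $\mathcal{W}^a_{e'}$ and hence are crossing roots $\beta_i-\beta_j$ ($j\le e'<i$) of multiplicity $\le 3$ or $-\beta_j$ ($j\le e'$) of multiplicity $\le a$, only lowers the coefficient $\tfrac32$ of each crossing root by at most $3$ and raises the coefficient of each $\beta_j$ ($j\le e'$) by at most $a$, which $\mathbf{V}^a(1,d)$ absorbs because it contains for each ordered pair both $\tfrac32[0,\beta_i-\beta_j]$ and $\tfrac32[0,\beta_j-\beta_i]$ (an effective symmetric segment of radius $\tfrac32$) and contains $\tfrac a2[-\beta_k,\beta_k]+[-\beta_k,0]$ in each pure direction $\beta_k$. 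Next, the first inequality in \eqref{ineq:pl} is exactly Lemma~\ref{lem:pl} for the dominant weight $(\chi'-\sigma_I)^+$ (with $l=e=e(\chi)$), strict whenever $e>e''$. Since $p_e(\psi)-p_e(\phi)=\langle\tau_e,\psi\rangle-\langle\tau_e,\phi\rangle$ and $p(\chi)=p_e(\chi)$, the last assertion of the Proposition is precisely $p_e((\chi'-\sigma_I)^+)>p(\chi)$, and it follows from \eqref{ineq:pl} as soon as one of the two inequalities there is strict.

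\emph{The two remaining claims.} Everything thus reduces to: (A) $p((\chi'-\sigma_I)^+)\ge p(\chi')$ (no hypothesis needed); and (B) if $p((\chi'-\sigma_I)^+)=p(\chi')$, then $e''\le e'$, with $e''<e'$ when $I\ne\emptyset$. Granting these: in case (i) we get $p(\chi)<p(\chi')\le p((\chi'-\sigma_I)^+)$ by (A), so the second inequality in \eqref{ineq:pl} is strict; in cases (ii) and (iii) we have $p(\chi')=p(\chi)$, so the second inequality in \eqref{ineq:pl} is an equality only if $p((\chi'-\sigma_I)^+)=p(\chi')$, in which case (B) gives $e''\le e'$, whence $e''<e$ (using $e'<e$ in case (ii), and $I\ne\emptyset$ forcing $e''<e'=e$ in case (iii)), so by Lemma~\ref{lem:pl} the first inequality in \eqref{ineq:pl} is strict; the conclusion follows in all cases. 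To prove (A) and (B) I would use that, $(\chi'-\sigma_I)^++\rho+\delta$ being the dominant $W$-conjugate of $\chi'+\rho+\delta-\sigma_I$, $\langle\tau_l,(\chi'-\sigma_I)^++\rho+\delta\rangle=\min_{|T|=l}\langle\tau_T,\chi'+\rho+\delta-\sigma_I\rangle$, where $\tau_T$ is the cocharacter taking value $t$ on the coordinates indexed by $T\subseteq\{1,\dots,d\}$ and $1$ on the others (so $\tau_{\{1,\dots,l\}}=\tau_l$); together with Lemma~\ref{lem:pl} this yields
\[
p((\chi'-\sigma_I)^+)=\min_{T}\Big(\langle\tau_T,\chi'+\rho+\delta\rangle-\langle\tau_T,\sigma_I\rangle+\tfrac32|T|(d-|T|)+\tfrac a2|T|\Big),
\]
while $p(\chi')=\langle\tau_{e'},\chi'+\rho+\delta\rangle+\tfrac32 e'(d-e')+\tfrac a2 e'$. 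Claim (A) is then the inequality that for each $T$ the bracketed quantity is $\ge p(\chi')$; I would split $T=T_1\sqcup T_2$ along level $e'$, estimate $\langle\tau_T,\chi'+\rho+\delta\rangle$ from below by the corresponding initial-segment sums (strict dominance of $\chi'+\rho+\delta$), estimate $-\langle\tau_T,\sigma_I\rangle$ from below using that every weight of $I\subseteq\mathcal{W}^a_{e'}$ pairs strictly negatively with $\tau_{e'}$, and absorb the defect into $\tfrac32|T|(d-|T|)$ exactly as in Lemma~\ref{lem:pl}, concluding with Lemma~\ref{lem:obvious} applied to the quantities $c_i+\tfrac a2$. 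Tracking the equality cases of Lemma~\ref{lem:obvious} through this estimate pins the minimizing $T$ to a subset of an initial segment of length $\le e'$, giving $e''\le e'$, and the strict inequality $\langle\tau_{e'},\sigma_I\rangle<0$ for $I\ne\emptyset$ forces the length to drop, giving (B).

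\emph{Main obstacle.} The delicate part is this last estimate: turning the Proposition~\ref{prop34} presentation of $\chi'+\rho+\delta$ into a lower bound for the bracketed quantity uniform in $T$, and in particular the equality analysis producing $e''<e'$ when $I\ne\emptyset$. The difficulty is that the crossing roots $\beta_i-\beta_j$ occurring in $\sigma_I$ interact both with the pure $\beta_k$-directions and with the root part of the presentation, so the split along level $e'$ must be carried out with care; this is the analogue, for the larger non--quasi-symmetric window $\mathbf{V}^a(1,d)$, of the weight comparisons of \cite{PT0}.
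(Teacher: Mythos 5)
Your overall architecture coincides with the paper's proof: the first inequality in \eqref{ineq:pl} is Lemma~\ref{lem:pl}, the final displayed inequality follows once one of the two inequalities is strict, and your claims (A) and (B) are exactly the paper's chain of inequalities and its equality analysis, which indeed ends with the observation $\langle\tau_{e'},\sigma_I\rangle<0$ forcing $e''<e'$. Your reformulation $p(\psi)=\min_{T}\bigl(\langle\tau_T,\psi+\rho+\delta\rangle+\tfrac32\lvert T\rvert(d-\lvert T\rvert)+\tfrac a2\lvert T\rvert\bigr)$ is a harmless repackaging of the paper's choice of the Weyl element $w$ with $(\chi'-\sigma_I)^++\rho=w(\chi'-\sigma_I+\rho)$ and of the level $e''$, and the membership check $(\chi'-\sigma_I)^++\rho+\delta\in\mathbf{V}^a(1,d)$ is done the same way where the proposition is applied.

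However, the step you flag as the ``main obstacle'' is genuinely not closed by the bookkeeping you describe, and it is precisely the point the paper settles by a device you do not articulate. Estimating $\langle\tau_T,\chi'+\rho+\delta\rangle$ and $-\langle\tau_T,\sigma_I\rangle$ \emph{separately} cannot work: for a crossing pair $j\le e'<i$ with $i\in T$, $j\notin T$, the multiset $I$ may contain $\beta_i-\beta_j$ with multiplicity up to $3$, so $-\langle\tau_T,\sigma_I\rangle$ can lose $3$ per such pair, while the slack $\tfrac32\lvert T\rvert(d-\lvert T\rvert)$ provides only $\tfrac32$ per pair and is already needed, as in Lemma~\ref{lem:pl}, for the root part of $\chi'+\rho+\delta$ itself. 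The correct move, and the paper's, is to subtract $\sigma_I$ \emph{before} estimating: in the Proposition~\ref{prop34} presentation of $\chi'+\rho+\delta$ at level $e'$ the crossing coefficients are exactly $\tfrac32$, so $\chi'-\sigma_I+\rho+\delta$ again has a presentation with all root coefficients $\widetilde c_{ij}\in[-\tfrac32,\tfrac32]$ and with diagonal coefficients $\widetilde c_i\ge c'_i$ (subtracting copies of $-\beta_j$ only raises them, and the crossing roots of $\sigma_I$ do not touch the diagonal part). With that in hand your plan does close: for every $T$ the bracketed quantity is $\ge\sum_{i\in T}(\widetilde c_i+\tfrac a2)\ge\sum_{i\in T}(c'_i+\tfrac a2)\ge\sum_{i\le e'}(c'_i+\tfrac a2)=p(\chi')$ by Lemma~\ref{lem:obvious}, and in the equality case a minimizing $T$ of size $e''$ must lie in $\{1,\dots,e'\}$ (so $e''\le e'$), while $T=\{1,\dots,e'\}$ is impossible for $I\ne\emptyset$ because then the bracket equals $p(\chi')+\lvert I\rvert>p(\chi')$. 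So the missing idea is small but real: absorb $\sigma_I$ into the canonical presentation rather than bounding its pairing with $\tau_T$ on its own.
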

\begin{proof}
	As the first inequality in (\ref{ineq:pl}) follows from Lemma~\ref{lem:pl}, 
	it is enough to prove the second inequality. 
	Write 
	\begin{align*}
				\chi'+\rho+\delta&=\sum_{1\leq i,j\leq d}c'_{ij}(\beta_i-\beta_j)+\sum_{1\leq i\leq d}c'_i\beta_i,\\
		\chi'-\sigma_I+\rho+\delta&=\sum_{1\leq i,j\leq d}\widetilde{c}_{ij}(\beta_i-\beta_j)+\sum_{1\leq i\leq d}\widetilde{c}_i\beta_i,
	\end{align*}
	where $c'_{ij}, c'_i$ are as in Proposition \ref{prop34}. Then
	$|\widetilde{c}_{ij}|\leq 3/2$ for all $1\leq i, j\leq d$, and $\widetilde{c}_i\geq c'_i$ for $1\leq i\leq d$. 
	Let $w\in \mathfrak{S}_d$ be such that $(\chi'-\sigma_I)^++\rho=w(\chi'-\sigma_I+\rho)$.
	Then using Lemma~\ref{lem:obvious} and the 
	assumption (i) or (ii) or (iii), we have 
	\begin{align}\label{ineqs}
		p((\chi'-\sigma_I)^+)&=\sum_{w(i) \leq e''}\left(\widetilde{c}_i+\frac{a}{2}\right) \stackrel{(\star)}{\geq} 
		\sum_{i=1}^{e''}\left(\widetilde{c}_i+\frac{a}{2}\right)
		 \geq
		\sum_{i=1}^{e''}\left(c_i'+\frac{a}{2}\right) \\
		&\notag \stackrel{(\diamondsuit)}{\geq}   
		\sum_{i=1}^{e'}\left(c_i'+\frac{a}{2}\right)
		=p(\chi') \geq p(\chi). 
				\end{align}	
	Therefore the second inequality in (\ref{ineq:pl}) also holds. 
	Suppose that every inequality in (\ref{ineqs})
	is an equality. Then $p(\chi)=p(\chi')$ and
	 $e'' \leq e'$ from the equality of $(\diamondsuit)$ and 
	 Lemma~\ref{lem:obvious}. 
	 Assume that $e''=e'$ and $I\neq \emptyset$. 
	From the equality of $(\star)$ and $e''=e'$, by Lemma~\ref{lem:obvious}
	we have 
$\{w(1), \ldots, w(e')\}=\{1, \ldots, e'\}$. 
Then 
we have 
\begin{align*}
	\langle \tau_{e'}, (\chi'-\sigma_I)^+ \rangle
=\langle \tau_{e'}, \chi'-\sigma_I \rangle
=\langle \tau_{e'}, \chi' \rangle-\langle \tau_{e'}, \sigma_I \rangle.
\end{align*}
As $I \subset \mathcal{W}_{e'}^a$ is non-empty, we have 
$\langle \tau_{e'}, \sigma_I \rangle<0$. 
Then $p((\chi'-\sigma_I)^+)>p(\chi')$, which contradicts that 
(\ref{ineqs}) are equalities. 
\end{proof}

 \subsection{Generation}

Fix $\mu\in\mathbb{R}$ and let $\delta:=\mu\sigma_d\in M(d)_\mathbb{R}$.

\begin{prop}\label{generationprop}
    The category $\mathbb{D}^a(1, d; \delta)$ is generated by the images of the Hall products \begin{equation}\label{Hallproduct3}
    \left(\boxtimes_{i=1}^k \mathbb{M}(d_i)_{w_i}\right)\boxtimes \mathbb{M}^a\left(1, d'; \delta'\right)\to \mathbb{D}^a(1, d; \delta)
    \end{equation}
    for all $d'\leq d$, all decompositions $(d_i)_{i=1}^k$ of $d-d'$, and tuples of integers $(w_i)_{i=1}^k$ satisfying \eqref{boundsalpha2},
    and where $\delta':=\left(\mu-d+d'\right)\sigma_{d'}$.
\end{prop}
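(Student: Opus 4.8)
The plan is to prove that every generating vector bundle $\OO_{\X^a(1,d)}\otimes\Gamma_{\GL(d)}(\chi)$ of $\mathbb{D}^a(1,d;\delta)$, where $\chi$ is dominant with $\chi+\rho+\delta\in\mathbf{V}^a(1,d)$, lies in the triangulated subcategory $\mathcal{C}\subseteq D^b(\X^a(1,d))$ generated by the images of the Hall products \eqref{Hallproduct3}. (That $\mathcal{C}\subseteq\mathbb{D}^a(1,d;\delta)$ follows from Proposition~\ref{bbw} together with the polytope identities underlying Propositions~\ref{prop34} and~\ref{prop:chi}; the content is the reverse inclusion.) I would carry this out in two steps, first peeling off an \emph{unframed} factor of size $e:=d-d'$ and then invoking Proposition~\ref{prop:sodD}; associativity of the categorical Hall product lets me compose the two resulting families of functors into \eqref{prel:halla}.

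\emph{Step A.} I would first show that $\mathbb{D}^a(1,d;\delta)$ is generated by the images of the Hall products $\mathbb{D}(e;\delta_e)\boxtimes\mathbb{M}^a(1,d';\delta')\to D^b(\X^a(1,d))$ of \eqref{prel:halla} (for the one-part partition of $e$), over all $e+d'=d$, where $\delta':=(\mu-e)\sigma_{d'}$ and $\delta_e:=\big(\mu+\tfrac{a}{2}+d'\big)\sigma_e$. The proof is by Noetherian induction: $p(\chi)\le 0$ takes finitely many values, so one may induct on $\big(-p(\chi),e(\chi)\big)$ in lexicographic order. Given $\chi$, Proposition~\ref{prop34} produces $e=e(\chi)$ and a splitting $\chi=\chi_1+\chi_2$ with $\chi_1$ supported on the first $e$ coordinates, such that (the weights $\chi_1,\chi_2$ being dominant because $\chi$ is) $\OO_{\X(e)}\otimes\Gamma_{\GL(e)}(\chi_1)\in\mathbb{D}(e;\delta_e)$ and $\OO_{\X^a(1,d')}\otimes\Gamma_{\GL(d')}(\chi_2)\in\mathbb{M}^a(1,d';\delta')$. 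Applying Proposition~\ref{bbw} to $R^a(1,d)$ with the cocharacter $\tau_e$ of \eqref{attract:a}, whose negative weights form exactly $\mathcal{W}^a_e$, the Hall product $\OO_{\X(e)}\otimes\Gamma(\chi_1)\ast\OO_{\X^a(1,d')}\otimes\Gamma(\chi_2)$ is quasi-isomorphic to a complex with terms $\OO_{\X^a(1,d)}\otimes\Gamma_{\GL(d)}\big((\chi-\sigma_J)^+\big)[|J|-\ell(J)]$ over multisets $J\subseteq\mathcal{W}^a_e$, the $J=\emptyset$ term being $\OO_{\X^a(1,d)}\otimes\Gamma_{\GL(d)}(\chi)$ in degree $0$. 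It then suffices to show that for $J\ne\emptyset$ each term either vanishes or lies in $\mathcal{C}$. For the first I would check $(\chi-\sigma_J)^++\rho+\delta\in\mathbf{V}^a(1,d)$ by a direct estimate with the polytope $\mathbf{V}^a(1,d)$ (as in the proof of \cite[Proposition~3.9]{PT0}), so that these terms are again generators of $\mathbb{D}^a(1,d;\delta)$; for the second, Lemma~\ref{lem:pl} and Proposition~\ref{prop37} (applied with $\chi'=\chi$ and $I=J$, case~(iii)) give $p\big((\chi-\sigma_J)^+\big)\ge p(\chi)$, with $e\big((\chi-\sigma_J)^+\big)<e(\chi)$ whenever equality holds; in either case the term is strictly smaller in the inductive order, so it lies in $\mathcal{C}$ by induction, and hence so does $\OO_{\X^a(1,d)}\otimes\Gamma_{\GL(d)}(\chi)$.

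\emph{Step B.} Proposition~\ref{prop:sodD}, applied to $\mathbb{D}(e;\delta_e)$ with $\mu$ there replaced by $\mu+\tfrac{a}{2}+d'$, gives a semiorthogonal decomposition of $\mathbb{D}(e;\delta_e)$ by the images of the Hall products $\boxtimes_{i=1}^k\mathbb{M}(d_i)_{w_i}\to D^b(\X(e))$ of \eqref{prel:hall}, over partitions $(d_i)_{i=1}^k$ of $e$ and integers $w_i$ with $-1-\mu-\tfrac{a}{2}-d'\le v_1^{(e)}/d_1<\dots<v_k^{(e)}/d_k\le-\mu-\tfrac{a}{2}-d'$, where $v_i^{(e)}:=w_i+d_i\big(\sum_{j>i}d_j-\sum_{j<i}d_j\big)$. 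Composing the functors of Step~A with those of Step~B and using associativity of the categorical Hall product identifies the composite with \eqref{prel:halla}, and the change of variables $v_i:=v_i^{(e)}+d_id'$ --- which is exactly \eqref{transvw} --- turns the displayed inequalities into \eqref{boundsalpha2}. Steps A and B together give the claim.

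\emph{The main obstacle.} The technical heart is the error-term analysis in Step~A: one must show at once that $(\chi-\sigma_J)^+$ stays in the polytope $\mathbf{V}^a(1,d)$ --- delicate because subtracting $\sigma_J$ can push a weight out of an individual summand of the Minkowski sum defining $\mathbf{V}^a(1,d)$, forcing a re-balancing of the decomposition in the spirit of \cite{PT0} --- and that it strictly decreases the inductive invariant. The latter is precisely what the invariants $e(\chi)$, $p(\chi)$ and Propositions~\ref{prop34} and~\ref{prop37} were designed to control, so the remaining work is to match the multiset $J\subseteq\mathcal{W}^a_e$ to the hypotheses of those results; the finer tie-breaking comparison with \cite[Subsection~3.4]{PT0}, which in this argument is absorbed into Step~B via Proposition~\ref{prop:sodD}, is then routine.
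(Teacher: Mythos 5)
Your proposal is correct and follows essentially the same route as the paper: a reduction via Proposition~\ref{prop:sodD} (your Step~B) combined with induction on the lexicographically ordered pair $(-p(\chi), e(\chi))$, using Proposition~\ref{prop34} to split $\chi=\chi_e+\chi_{d'}$, Proposition~\ref{bbw} to control the cone of $\mathcal{O}\otimes\Gamma(\chi)\to p_*q^*(\mathcal{O}\otimes\Gamma_L(\chi_e+\chi_{d'}))$, the polytope estimate of Proposition~\ref{prop:inside} to keep the error terms $(\chi-\sigma_I)^+$ in $\mathbf{V}^a(1,d)$, and Proposition~\ref{prop37} for the strict decrease of the inductive invariant. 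The only difference is presentational (you run the induction first and refine by Proposition~\ref{prop:sodD} afterwards, while the paper invokes Proposition~\ref{prop:sodD} at the outset), and your change of variables $v_i=v_i^{(e)}+d_id'$ matching \eqref{transvw} and \eqref{boundsalpha2} is the correct bookkeeping.
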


We first explain that the images of the functors \eqref{Hallproduct3} are indeed in the category $\mathbb{D}^a(1, d; \delta)$.

\begin{prop}\label{prop:inside}
    Let $(d_i, w_i)_{i=1}^k$ and $d'$ be as above. Then the image of the Hall product 
    \[\left(\boxtimes_{i=1}^k \mathbb{M}(d_i)_{w_i}\right)\boxtimes \mathbb{M}^a\left(1, d'; \delta'\right)\to D^b(\X^a(1, d))\] is inside $\mathbb{D}^a(1, d; \delta)$. 
\end{prop}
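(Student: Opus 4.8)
The statement to prove is Proposition~\ref{prop:inside}: the image of the Hall product functor
\[
\left(\boxtimes_{i=1}^k \mathbb{M}(d_i)_{w_i}\right)\boxtimes \mathbb{M}^a\left(1, d'; \delta'\right)\to D^b(\X^a(1, d))
\]
lands in $\mathbb{D}^a(1, d; \delta)$. The plan is to reduce to a statement about weights of generating vector bundles and then invoke the Borel--Weil--Bott computation (Proposition~\ref{bbw}) together with the converse direction of the polytope decomposition in Proposition~\ref{prop34} (equivalently, the ``converse'' part of Subsection~\ref{dectree2}).

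\emph{Step 1: reduce to generators.} Since $\mathbb{M}(d_i)_{w_i}$ is generated by the vector bundles $\OO_{\X(d_i)}\otimes\Gamma_{GL(d_i)}(\chi_i)$ with $\chi_i+\rho_i\in\mathbf{W}(d_i)_{w_i}$, and $\mathbb{M}^a(1,d';\delta')$ is generated by $\OO_{\X^a(1,d')}\otimes\Gamma_{GL(d')}(\chi')$ with $\chi'+\rho'+\delta'\in\mathbf{W}^a(1,d')$ (here $\delta'=(\mu-d+d')\sigma_{d'}=(\mu-e)\sigma_{d'}$), and since the Hall product is exact (being $p_{\lambda*}q_\lambda^*$ with $q_\lambda$ flat and $p_\lambda$ proper), it suffices to show that the Hall product of such a tuple of generators lies in $\mathbb{D}^a(1,d;\delta)$. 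Write $\lambda=\lambda'$ for the antidominant cocharacter of $T(d)$ attached to the partition $(d_1,\dots,d_k,d')$ of $d$ as in Subsection~\ref{paco}. The object $q_\lambda^*\big(\OO_{\X^a(1,d)^\lambda}\otimes\Gamma_{G^\lambda}(\chi_1+\cdots+\chi_k+\chi')\big)$ on $\X^a(1,d)^{\lambda\geq 0}$ pushes forward under $p_\lambda$; by Proposition~\ref{bbw} applied to $X=R^a(1,d)$, $G=GL(d)$, and this cocharacter $\lambda$, the pushforward $p_{\lambda*}q_\lambda^*$ of $\OO\otimes\Gamma_{G^\lambda}(\chi)$ (with $\chi:=\sum\chi_i+\chi'$) has a resolution by the (shifted) vector bundles $\OO_{R^a(1,d)}\otimes\Gamma_{GL(d)}\big((\chi-\sigma_J)^+\big)$ over multisets $J\subset\{\beta\in\mathcal{W}^a \mid \langle\lambda,\beta\rangle<0\}=\mathcal{W}^a_e$ (in the notation of Subsection on invariants of weights). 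So it is enough to check that each weight $(\chi-\sigma_J)^+$ lies in $\mathbf{V}^a(1,d)-\rho-\delta$, i.e.\ that $(\chi-\sigma_J)^+ +\rho+\delta\in\mathbf{V}^a(1,d)$, for every such $J$.

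\emph{Step 2: the weight estimate.} For $J=\emptyset$ this is precisely the ``converse'' statement recorded at the end of Subsection~\ref{dectree2} combined with Proposition~\ref{prop34}: if $\chi_i+\rho_i\in\mathbf{W}(d_i)_{w_i}$ with the $(w_i)$ satisfying the inequalities \eqref{boundsalpha2}, and $\chi'+\rho'+(\mu-e)\sigma_{d'}\in\mathbf{W}^a(1,d')$, then setting $\chi=\sum\chi_i+\chi'$ one has, via the identification of \eqref{chisum} in Proposition~\ref{prop34} (which identifies $\big(-\tfrac{a+3f}{2}\sigma_e+\mathbf{V}(e)\big)+\big(\tfrac{3e}{2}\sigma_f+\mathbf{W}^a(1,f)\big)$ with a chamber of $\mathbf{V}^a(1,d)$, with $e=d-d'$, $f=d'$), that $\chi+\rho+\delta\in\mathbf{V}^a(1,d)$. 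For general $J\neq\emptyset$: since every $\beta\in J$ satisfies $\langle\lambda,\beta\rangle<0$, subtracting $\sigma_J$ moves the weight ``more antidominant'' for $\lambda$; one checks that the coordinates of $(\chi-\sigma_J)$ in the decomposition \eqref{decomm} still obey the coefficient bounds \eqref{coeffdec}, because each $\beta_i-\beta_j$ with $j\leq e<i$ can be absorbed into the existing $\tfrac32(\beta_i-\beta_j)$-terms (these coefficients sit at the boundary $3/2$, but subtracting makes them smaller, which is fine), and each $-\beta_j$ with $j\leq e$ can be absorbed into the $c_j\beta_j$-term (pushing $c_j$ down toward $-(a+2)/2$, which is still allowed since the $\mathbf{V}(e)$-part of \eqref{chisum} accommodates the extra $\text{sum}[-\beta_k,0]$ summand). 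Taking the dominant Weyl-shifted conjugate $(\cdot)^+$ does not leave $\mathbf{V}^a(1,d)$ because that polytope is $W$-invariant after the $*$-shift (equivalently, $\mathbf{V}^a(1,d)=\tfrac32\sum[0,\beta_i-\beta_j]+\tfrac a2\sum[-\beta_k,\beta_k]+\sum[-\beta_k,0]$ is already invariant under $W=\mathfrak{S}_d$, and the $\rho$-shift is compensated in the usual way, cf.\ \cite[Section 3.2]{hls}). This gives $(\chi-\sigma_J)^+ +\rho+\delta\in\mathbf{V}^a(1,d)$ for all $J$, completing the argument.

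\emph{Main obstacle.} The routine parts are Step~1 (formal, once $\mathbf{V}^a(1,d)$ is known $W$-invariant) and the $J=\emptyset$ case of Step~2 (already established). The genuine point is the uniform control of the coefficient decomposition after subtracting an \emph{arbitrary} multiset $J\subset\mathcal{W}^a_e$ and then applying $(\cdot)^+$: one must show the resulting dominant weight still admits a decomposition of the shape \eqref{chisumstrong} with the \emph{same} $e$, i.e.\ that the invariant $e(\,\cdot\,)$ does not jump in the wrong direction. This is exactly the content embedded in Proposition~\ref{prop37} (and Lemma~\ref{lem:pl}), which shows $e((\chi'-\sigma_I)^+)\leq e(\chi')$ and controls $p$; I would import that monotonicity rather than re-derive it, reducing the whole proposition to a bookkeeping check that the Hall-product input weights feed correctly into the hypotheses of Proposition~\ref{prop34} and Proposition~\ref{bbw}.
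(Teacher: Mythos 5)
There is a genuine gap, and it sits exactly where your argument departs from the paper. You apply Proposition~\ref{bbw} once, for the cocharacter $\lambda$ attached to the full partition $(d_1,\dots,d_k,d')$, and you assert that the relevant multisets $J$ range over $\{\beta\in\mathcal{W}^a \mid \langle\lambda,\beta\rangle<0\}=\mathcal{W}^a_e$. That identification is false as soon as $k\geq 2$: for the cocharacter \eqref{take:lambda} extended by $1_{d'}$, the weights $\beta_i-\beta_j$ with $i,j\leq e$ lying in two different blocks (each with multiplicity $3$) also pair negatively, so the resolution in Proposition~\ref{bbw} contains terms $\OO\otimes\Gamma_{GL(d)}\bigl((\chi-\sigma_J)^+\bigr)$ for $J$ involving these internal weights, and your Step~2 never addresses them. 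For such internal pairs your absorption argument breaks down: the corresponding coefficients of $\chi+\rho+\delta$ are only known to lie in $[0,3/2]$ (they are not pinned at the boundary value $3/2$, which is special to the crossing pairs $j\leq e<i$), so subtracting up to three copies of $\beta_i-\beta_j$ can destroy the exhibited decomposition, and you give no replacement. Your proposed fallback, importing Proposition~\ref{prop37} (or Lemma~\ref{lem:pl}), does not fill this hole: that proposition controls the invariants $p(\cdot)$ and $e(\cdot)$ of weights already known to satisfy the polytope condition and is only formulated for $I\subset\mathcal{W}^a_{e'}$; it does not prove that $(\chi-\sigma_J)^++\rho+\delta\in\mathbf{V}^a(1,d)$ for the extra internal $J$'s. (There is also a small sign slip in your treatment of $-\beta_j\in J$: subtracting $\sigma_J$ \emph{adds} $\beta_j$, pushing $c_j$ up toward $a/2$, not down toward $-(a+2)/2$; the bound still holds, but not for the reason you give.)

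The paper's proof is organized precisely to avoid the problem you ran into. It first invokes Proposition~\ref{prop:sodD} (together with associativity of the Hall product): since the categories $\boxtimes_{i=1}^k\mathbb{M}(d_i)_{w_i}$ with the weights constrained by \eqref{boundsalpha2} are semiorthogonal summands of $\mathbb{D}(e;\delta_e)$ with $\delta_e=(\mu+d'+a/2)\sigma_e$, it suffices to show that the \emph{two-step} Hall product $p_{e,d'*}q_{e,d'}^*$ sends $\mathbb{D}(e;\delta_e)\boxtimes\mathbb{M}^a(1,d';\delta')$ into $\mathbb{D}^a(1,d;\delta)$. For the two-block cocharacter only the crossing weights $\mathcal{W}^a_e$ appear in Proposition~\ref{bbw}, and there the explicit decomposition of $\chi+\rho+\delta$ has $c_{ij}=3/2$, $c_{ji}=0$ for $j\leq e<i$ and $c_i+a/2\in[-1,0]$ for $i\leq e$, so subtracting any $\sigma_I$ with $I\subset\mathcal{W}^a_e$ keeps the weight in $\mathbf{V}^a(1,d)$, and Weyl-invariance of $\mathbf{V}^a(1,d)$ handles the passage to $(\chi-\sigma_I)^+$. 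If you insist on the one-shot approach, you would in effect have to reprove the containment part of \cite[Proposition~3.9]{PT0} for the internal weights on top of the crossing-weight estimate; the reduction via Proposition~\ref{prop:sodD} is the missing structural step in your proposal.
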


\begin{proof}
    By Proposition~\ref{prop:sodD}, it suffices to show that the image of the functor
    \[p_{e, d'*}q_{e,d'}^*\colon \mathbb{D}(e; \delta_e)\boxtimes \mathbb{M}^a(1, d'; \delta')\to D^b(\X^a(1, d))\] 
       lies in $\mathbb{D}^{a}(1, d; \delta)$, where $\delta_e:=\left(\mu+d'+a/2\right)\sigma_e$. Consider weights $\chi_e\in M(e)$ and $\chi_{d'}\in M(d')$ such that 
    $\chi_e+\rho_e+\delta_e\in \mathbf{V}(e)$ and $\chi_{d'}+\rho_{d'}+\delta'\in \mathbf{W}^a(1, d')$. 
    It suffices to check that 
    \begin{align}\label{Gamma:in}p_{e,d'*}q_{e,d'}^*\left(\mathcal{O}_{\mathcal{Y}}\otimes \Gamma_L(\chi_e+\chi_{d'})\right) \in \mathbb{D}^{a}(1, d; \delta),
    \end{align}
    where $L:=GL(e)\times GL(d')$ and $\mathcal{Y}:=\X(e)\times\X(d')$. Let $\chi:=\chi_e+\chi_{d'}$. 
    We have 
    \begin{align*}
        \chi+\rho+\delta&=(\chi_e+\rho_e+\delta_e)+(\chi_{d'}+\rho_{d'}+\delta')
        +(\rho-\rho_e-\rho_{d'})+(\delta-\delta_e-\delta') \\
        & \in \mathbf{V}(e)+\mathbf{W}^a(1, d')-\left(\frac{3}{2}d'+\frac{a}{2}\right)
        \sigma_e+\frac{3}{2}e \sigma_{d'}.
    \end{align*}
    Then we can write 
      \[\chi+\rho+\delta=\sum_{i,j\leq d}c_{ij}(\beta_i-\beta_j)+\sum_{i\leq d}c_i\beta_i\]
      where $c_{ij}$ and $c_i$ satisfy 
      $|c_{ij}|\leq 3/2$ with $c_{ij}=3/2$ and $c_{ji}=0$ for $j\leq e<i$, $-1\leq c_i+a/2\leq 0$ for $1\leq i\leq e$, $|c_i|\leq a/2$ for $e<i\leq d$. 
    We take $I\subset \mathcal{W}^a_e$
    and write $\sigma_I$
    as 
    \begin{align*}
     \sigma_I=\sum_{j\leq e<i} f_{ij}(\beta_i-\beta_j)-\sum_{i\leq e}f_i\beta_i
     \end{align*}
     for $0\leq f_{ij}\leq 3$ and $0\leq f_i\leq a$. Then $\chi-\sigma_I+\rho+\delta$ is in $\mathbf{V}^a(1, d)$. The polytope $\mathbf{V}^a(1, d)$ is Weyl invariant, so $(\chi-\sigma_I)^++\rho+\delta$ is in $\mathbf{V}^a(1, d)$.
    Thus we have (\ref{Gamma:in}) by Proposition \ref{bbw}.
\end{proof}

\begin{proof}[Proof of Proposition \ref{generationprop}]
    By Proposition~\ref{prop:sodD}, it suffices to check that the category $\mathbb{D}^a(1, d; \delta)$ is generated by the images of the Hall products \[p_{e, d'*}q_{e,d'}^*\colon \mathbb{D}(e; \delta_e)\boxtimes \mathbb{M}^a(1, d'; \delta')\to \mathbb{D}^a(1, d; \delta)\] for all $e\leq d$, $d'=d-e$, and $\delta_e:=\left(\mu+d'+a/2\right)\sigma_e$. Let $\chi$ be a dominant weight such that $\chi+\rho+\delta\in \mathbf{V}^a(1, d; \delta)$. We need to show that $\mathcal{O}_{\X(d)}\otimes \Gamma_{GL(d)}(\chi)$ is generated by the images of these functors. The function $p(\chi)$ takes finitely many values. We use induction on the pair $(-p(\chi), e(\chi))$ ordered lexicographically. 
    Use Proposition \ref{prop34} for $\chi+\rho+\delta$ and write
    $\chi=\chi_e+\chi_{d'}$ such that 
    \[\chi_e+\rho_e+\delta_e\in \mathbf{V}(e), \ 
    \chi_{d'}+\rho_{d'}+\delta' \in\mathbf{W}^a(1, d').\]
    By Proposition \ref{bbw}, the cone of the map 
    \[\mathcal{O}_{\X(d)}\otimes \Gamma_{GL(d)}(\chi)\to p_{e,d'*}q_{e,d'}^*\left(\mathcal{O}_{\mathcal{Y}}\otimes \Gamma_L(\chi_e+\chi_{d'})\right)\] is generated by the vector bundles $\mathcal{O}_{\X(d)}\otimes \Gamma_{GL(d)}((\chi-\sigma_I)^+)$ for non-empty multisets $I\subset \mathcal{W}^a_e$. 
    
    If $(-p(\chi), e(\chi))=(0, 0)$, then $\chi+\rho+\delta\in\mathbf{W}^a(1, d;\delta)$. 
    Assume $(-p(\chi), e(\chi))$ is different from $(0, 0)$ and let $I$ be a non-empty subset of $\mathcal{W}^a_e$.
    Then as in the proof of Proposition~\ref{prop:inside}, we have 
     $(\chi-\sigma_I)^++\rho+\delta \in \mathbf{V}^a(1, d)$. 
    By Proposition \ref{prop37}, either $-p((\chi-\sigma_I)^+)<-p(\chi)$ or $p((\chi-\sigma_I)^+)=p(\chi)$ and $e((\chi-\sigma_I)^+)<e(\chi)$. The claim thus follows by the induction of $(-p(\chi), e(\chi))$.
\end{proof}

\subsection{Orthogonality}

In this subsection, we discuss orthogonality of the categories appearing on the right hand side of \eqref{SOD32}. 
	   For $S=(d_i, w_i)_{i=1}^k$, let 
	   $e=\sum_{i=1}^k d_i$ and 
	   $f:=d-e$.
	   We denote by $\lambda_S$ the antidominant cocharacter
    \begin{equation}\label{cocharacterS}
    \lambda_S:=(\overbrace{t^{k}, \ldots, t^{k}}^{d_1},\ldots, \overbrace{t, \ldots, t}^{d_k},\overbrace{1, \ldots, 1}^{f}):\mathbb{C}^*\to T(d).
        \end{equation}
        In order to simplify the notation, 
        we set 
        $\mathcal{X}:=\mathcal{X}^a(1, d)$, 
        $\mathcal{Y}:=\mathcal{X}^a(1, d)^{\lambda_S \geq 0}$
        and $\mathcal{Z}:=\mathcal{X}^a(1, d)^{\lambda_S}$.
      We have the maps from the 
    attracting stacks for the antidominant cocharacter $\lambda_S$, 
    see (\ref{attract:a})
    \begin{align*}
        \mathcal{Z} \stackrel{q}{\leftarrow} \mathcal{Y}
        \stackrel{p}{\to} \mathcal{X}. 
    \end{align*}
For another $S'=(d_i', w_i')$, we define $(e', f', \lambda_{S'}, \mathcal{X}', \mathcal{Y}', \mathcal{Z}', 
p', q')$ in 
a similar way.

\begin{prop}\label{prop311}
Suppose that $(S', S)\in O$, see Subsection \ref{comparison}.
Let $\mu\in\mathbb{R}$, and 
set $\delta_f=(\mu-e)\sigma_f \in M(f)_{\mathbb{R}}$ 
and $\delta_{f'}=(\mu-e')\sigma_{f'} \in M(f')_{\mathbb{R}}$.
Let $A, \widetilde{A}$ and $B$ be objects 
\begin{align*}
    A, \widetilde{A} \in \left(\boxtimes_{i=1}^k \mathbb{M}(d_i)_{w_i}\right)\boxtimes \mathbb{M}^a\left(1, f; \delta_f\right), \ 
    B \in \left(\boxtimes_{i=1}^{k'} \mathbb{M}(d'_i)_{w'_i}\right)\boxtimes \mathbb{M}^a\left(1, f'; \delta_{f'}\right). 
\end{align*}
We have 
\begin{align*}
    \mathrm{Hom}_{\X}(p'_*q'^*B, p_*q^*A)=0, \
    \mathrm{Hom}_{\mathcal{Z}}(\widetilde{A}, A)\stackrel{\cong}{\to}
    \mathrm{Hom}_{\X}(p_*q^*\widetilde{A}, p_*q^*A).
\end{align*}

\end{prop}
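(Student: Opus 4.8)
The plan is to reduce everything to explicit computations with the complexes produced by Borel--Weil--Bott and then to feed in the weight estimates already set up. By Proposition~\ref{prop:sodD} (collapsing the inner Hall products $\boxtimes_i\mathbb{M}(d_i)_{w_i}\to\mathbb{D}(e;\delta_e)$, and similarly for $S'$) together with associativity of the categorical Hall product, it suffices to take $A,\widetilde A,B$ to be generating vector bundles of the relevant subcategories; via Proposition~\ref{prop:chi} such a bundle $A$ on $\mathcal Z$ corresponds to a dominant weight $\chi$ of type $S$ with $e(\chi)=e$, and Proposition~\ref{bbw} then identifies $p_*q^*A$ with a finite complex whose terms are the bundles $\mathcal O_{\mathcal X}\otimes\Gamma_{GL(d)}((\chi-\sigma_J)^+)[|J|-\ell(J)]$, $J\subseteq\mathcal W^a_e$; likewise $p'_*q'^*B$ with $J'\subseteq\mathcal W^a_{e'}$. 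Since $\lambda_S$ is central in $L_S:=GL(d)^{\lambda_S}$, the bundle $A$ is homogeneous of a definite $\lambda_S$-weight $W$, and $\widetilde A$ of the same weight.

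For the full faithfulness I would argue directly. By $p_*\dashv p^!$ and base change along $\mathcal Y\times_{\mathcal X}\mathcal Y$ (which contains the diagonal and is otherwise stratified by the nontrivial cosets in $W_{\lambda_S}\backslash W/W_{\lambda_S}$) one obtains $p^!p_*q^*A\cong q^*A\oplus(\text{$\lambda_S$-weight}<W)$, the standard mechanism behind the fact that $p_*q^*$ is fully faithful on a $\lambda_S$-weight window (cf.~\cite{P},~\cite{hls}). Then $\Hom_{\mathcal X}(p_*q^*\widetilde A,p_*q^*A)=\Hom_{\mathcal Y}(q^*\widetilde A,p^!p_*q^*A)$, where the lower-weight part contributes nothing (it stays of $\lambda_S$-weight $<W$ after applying $q_*$, while $\widetilde A$ is homogeneous of weight $W$ on the $\lambda_S$-graded stack $\mathcal Z$) and the diagonal term gives $\Hom_{\mathcal Y}(q^*\widetilde A,q^*A)=\Hom_{\mathcal Z}(\widetilde A,q_*q^*A)=\Hom_{\mathcal Z}(\widetilde A,A)$, the last equality because $q_*q^*A=A\otimes q_*\mathcal O_{\mathcal Y}$ and $q_*\mathcal O_{\mathcal Y}=\mathcal O_{\mathcal Z}\oplus(\text{$\lambda_S$-weight}<0)$; one then checks that this identification is the one induced by the adjunction unit.

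For the vanishing $\Hom_{\mathcal X}(p'_*q'^*B,p_*q^*A)=0$: since $\mathcal X=R^a(1,d)/GL(d)$ is a quotient of an affine space by a reductive group, $\mathrm{RHom}_{\mathcal X}$ between bundles $\mathcal O_{\mathcal X}\otimes\Gamma(-)$ lies in cohomological degree $0$, so it is enough to show that each summand $\mathcal O_{\mathcal X}\otimes\Gamma((\chi'-\sigma_{J'})^+)$ of $p'_*q'^*B$ has no maps to any summand $\mathcal O_{\mathcal X}\otimes\Gamma((\chi-\sigma_J)^+)$ of $p_*q^*A$. Proposition~\ref{prop37} is tailored for this: its hypotheses (i)--(iii) are exactly the three clauses in the definition of $(S',S)\in O$ in Subsection~\ref{comparison}, and (applied also with $\chi$ in both roles and $I=J$, its case (iii)) it shows the weights $(\chi-\sigma_J)^+$ and $(\chi'-\sigma_{J'})^+$ are separated in the invariant $p$ and, when that ties, in $e$; together with Lemma~\ref{lem:pl} this is precisely the hypothesis of the standard vanishing criterion for such Homs, as in \cite[Subsection~3.4]{PT0}. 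The one case not reached by Proposition~\ref{prop37}, namely $p(S')=p(S)$ and $e'=e$, is by definition of $O$ the setting of \cite[Proposition~3.8]{PT0}; there I would factor the Hall products through $\mathcal X(e)$ and reduce the Hom-computation on $\mathcal X^a(1,d)$ to that proposition on $\mathcal X(e)$.

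The hard part is this last paragraph: the double bookkeeping over $J$ and $J'$, controlling the Weyl-shifts $(\,\cdot\,)^{+}$, and verifying that the separation coming from Proposition~\ref{prop37} and Lemma~\ref{lem:pl} annihilates \emph{every} pair of Borel--Weil--Bott summands; in the tied case one must also check that the reduction to the $\mathbb{C}^3$ orthogonality of \cite{PT0} survives the presence of the extra $\mathbb{M}^a(1,f;\delta_f)$-factor. The full faithfulness is by comparison the soft part, once the $\lambda_S$-weight of the generators has been pinned down.
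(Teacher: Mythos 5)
Your reduction to generating bundles and the idea of feeding Proposition~\ref{prop37} into a Borel--Weil--Bott resolution is the right general shape, but the two key steps are carried out on the wrong stack and the ``soft part'' is not soft. For the vanishing $\Hom_{\X}(p'_*q'^*B,p_*q^*A)=0$ you resolve \emph{both} sides by Proposition~\ref{bbw} and then try to kill the Homs termwise on $\X$ itself, invoking a ``standard vanishing criterion'' in terms of the invariants $p$ and $e$. No such criterion exists on $\X=R^a(1,d)/GL(d)$: since $R^a(1,d)$ is a symmetric representation, $\mathbb{C}[R^a(1,d)]$ contains weights in all directions, so $\Hom_{\X}(\mathcal{O}_{\X}\otimes\Gamma_{GL(d)}((\chi'-\sigma_{J'})^+),\mathcal{O}_{\X}\otimes\Gamma_{GL(d)}((\chi-\sigma_J)^+))$ is typically nonzero however the two weights are separated by a linear functional (nothing even prevents the same summand from occurring in both resolutions, in which case the Hom contains the identity); note also that Proposition~\ref{prop37} only compares $(\chi'-\sigma_I)^+$ with $\chi$, not with the shifted summands $(\chi-\sigma_J)^+$. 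The weight criterion you want, \cite[Proposition~4.2]{P}, lives on the attracting stack: the paper first applies adjunction, $\Hom_{\X}(p'_*q'^*B,p_*q^*A)\cong\Hom_{\mathcal{Y}}(p^*p'_*q'^*B,q^*A)$, resolves only $p^*p'_*q'^*B$, keeps $q^*A$ intact, and uses that $\tau_e$ acts with non-negative weights on $R^a(1,d)^{\lambda_S<0}$ so that the single inequality $\langle\tau_e,(\chi'-\sigma_I)^+\rangle>\langle\tau_e,\chi\rangle$ from Proposition~\ref{prop37} (together with Lemma~\ref{lem:pl}) suffices. In the tied case $p(S')=p(S)$, $e=e'$, $\sum w_i=\sum w'_i$ the paper does not route the computation back through $\X(e)$ and \cite[Proposition~3.8]{PT0}; it compares $\lambda_S$-weights directly via \cite[Proposition~4.3]{P} --- the step you yourself flag as unchecked is exactly where your proposed detour would have to be justified against the extra $\mathbb{M}^a(1,f;\delta_f)$-factor and the extra arrows of $R^a(1,d)$.

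The full faithfulness claim is the second genuine gap: $p^!p_*q^*A\cong q^*A\oplus(\lambda_S\text{-weight}<W)$ is not a formal consequence of $A$ being homogeneous of a single $\lambda_S$-weight. Already for $X=0$, $G=GL(2)$ and $A$ a $T$-character $\chi$ with $\chi+\rho$ on a wall one has $p_*q^*A=0$ while $\Hom_{\mathcal{Z}}(A,A)=\mathbb{C}$, so the asserted decomposition fails; what excludes such behaviour here are precisely the polytope constraints on the weights, i.e.\ the same estimates as in the first half. Accordingly the paper proves this half by forming the cone $C$ of $p^*p_*q^*\widetilde{A}\to q^*\widetilde{A}$ and showing $\Hom_{\mathcal{Y}}(C,q^*A)=0$ via \cite[Proposition~4.2]{P}, Proposition~\ref{bbw} and Proposition~\ref{prop37} applied with non-empty $I$ (its case (iii)), after reducing to full faithfulness of $\mathbb{D}(e;\delta_e)\boxtimes\mathbb{M}^a(1,f;\delta_f)\to\mathbb{D}^a(1,d;\delta)$ by \cite[Proposition~3.9]{PT0}. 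So the part you set aside as formal needs the same weight machinery, and as written your argument leaves both halves unproved.
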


\begin{proof}
    We discuss the first equality. Let $\chi$ and $\chi'$ be dominant weights
    such that 
    \begin{align*}
        \chi+\rho+\delta \in \mathbf{V}^a(1, d), \ 
        \chi'+\rho+\delta \in \mathbf{V}^a(1, d),  
    \end{align*}
    which are of types $S$, $S'$ respectively. 
         Consider the Levi group $L:=GL(d)^{\lambda_S}$, 
         and define
         $L'$ analogously for $S'$.
    We may assume that $A=\mathcal{O}_{\mathcal{Z}}\otimes \Gamma_L(\chi)$ and $B=\mathcal{O}_{\mathcal{Z}'}
    \otimes\Gamma_{L'}(\chi')$.
    By adjunction, it suffices to check that 
    \[\text{Hom}_{\mathcal{Y}}(p^*p'_*q'^*B, q^*A)=0.\]
    Assume first that $e=e'$ and $\sum_{i=1}^k w_i=\sum_{i=1}^{k'}w'_i$. Then the conclusion follows as in \cite[Proposition 3.9]{PT0}. More explicitly, by \cite[Proposition 4.2]{P} and Proposition \ref{bbw}, it suffices to check that the $\lambda_S$-weights of the vector bundles in the resolution of $p^*p'_*q'^*B$ are strictly greater than the $\lambda_S$-weight of $\chi$, and this is verified in \cite[Proposition 4.3]{P}.
    
    Assume next that either $p(S')>p(S)$ or $p(S)=p(S')$ and $e>e'$. By \cite[Proposition 4.2]{P} and Proposition \ref{bbw}, it suffices to check that
    \begin{align}\label{ineq:taue}
    \langle \tau_e, (\chi'-\sigma_I)^+\rangle>\langle \tau_e, \chi\rangle
    \end{align}
    for all $I\subset \mathcal{W}^a_{S'}$, 
   where $\mathcal{W}^a_{S'}$ be the multiset of weights of $R^a(1, d)^{\lambda_{S'}<0}$. 
  Since $\mathcal{W}_{S'}^a \subset \mathcal{W}_e^a$, the inequality 
  (\ref{ineq:taue}) follows from Proposition \ref{prop37}. Note that \cite[Proposition 4.2]{P} can be applied ad litteram when comparing $\lambda_S$-weights, however the proof in loc.~cit.~applies to $\tau_e$-weights as well because $\tau_e$ fixes $R^a(d)^{\lambda_S}$ and acts with non-negative weights on $R^a(d)^{\lambda_{S}<0}$.

    We next discuss the second isomorphism. 
    By \cite[Proposition 3.9]{PT0}, it suffices to show that the functor
    \[p_{*}q^*\colon \mathbb{D}(e; \delta_e)\boxtimes \mathbb{M}^a(1, f; \delta_f)\to \mathbb{D}^a(1, d; \delta)\] is fully faithful, where $\delta_e:=\left(\mu+f+a/2\right)\sigma_e$. We may assume that $\widetilde{A}=\mathcal{O}_{\mathcal{Z}}\otimes \Gamma_L(\widetilde{\chi})$ for $\widetilde{\chi}$ a dominant weight of type $S$. By adjunction, it suffices to show that 
    \[\Hom_{\mathcal{Y}}(p^*p_*q^*\widetilde{A}, q^*A)\cong \Hom_{\mathcal{Y}}(q^*\widetilde{A}, q^*A)\cong \Hom_{\mathcal{Z}}(\widetilde{A}, A).\]
    Let $C$ be the cone of the morphism $p^*p_*q^*\widetilde{A}\to q^*\widetilde{A}$. It suffices to check that $\Hom_{\mathcal{Y}}(C, q^*A)=0$.
    By \cite[Proposition 4.2]{P} and Proposition \ref{bbw}, it suffices to check that \[\langle \tau_e, (\widetilde{\chi}-\sigma_I)^+\rangle>\langle \tau_e, \chi\rangle\] for all non-empty $I\subset \mathcal{W}^a_{S'}$. This follows from Proposition \ref{prop37}.
\end{proof}

\subsection{Proofs of the theorems}\label{ss:proof}
    In this subsection, we prove Theorems \ref{thm2} and \ref{thm2bis} and Corollary \ref{thm12}. 
    
\begin{proof}[Proof of Theorem \ref{thm2}]
    The statement follows from Propositions \ref{generationprop} and \ref{prop311}. 
\end{proof}    
    
Recall the subcategory $\mathbb{E}^a(1, d; \delta)$ of $D^b(\X^{af}(1, d))$ from Subsection \ref{defcategories}. Consider the projection map $b\colon \X^{af}(1, d)\to \X^a(1, d)$. 

\begin{prop}\label{prop312}
    There is a semiorthogonal decomposition
    \begin{equation}\label{SOD33}
    \mathbb{E}^a(1, d; \delta)=\Big\langle \left(\boxtimes_{i=1}^k \mathbb{M}(d_i)_{w_i}\right)\boxtimes \mathbb{F}^a(1, d'; \delta') \Big\rangle,
     \end{equation} where the right hand side is as in Theorem \ref{thm2}.
\end{prop}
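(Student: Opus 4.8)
The plan is to deduce Proposition~\ref{prop312} from Theorem~\ref{thm2} (that is, from Propositions~\ref{generationprop} and~\ref{prop311}) by comparing $\X^{af}(1,d)$ with $\X^a(1,d)$ along the projection $b$. The starting point is that $R^{af}(1,d)=R^a(1,d)\oplus V$, so $b\colon\X^{af}(1,d)\to\X^a(1,d)$ is the total space of the $GL(d)$-equivariant vector bundle attached to the standard representation $V$; in particular $b$ is flat. The crucial observation is that this extra summand $V$ is harmless for all the cocharacters that occur: for $\lambda_S$ as in~\eqref{cocharacterS} one has $\langle\lambda_S,\beta_i\rangle\geq 0$ and $\langle\tau_e,\beta_i\rangle\geq 0$ for every $i$, so $V^{\lambda_S\geq 0}=V$ and $V^{\lambda_S<0}=0$. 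It follows that the square with vertical maps $b$ and $b^{\geq 0}$ and horizontal maps $p_{\lambda_S}$ (on $\X^{af}(1,d)$, respectively on $\X^a(1,d)$) is Cartesian, that the corresponding square for the $q_{\lambda_S}$-maps commutes with right vertical map $\id\times b_f$ on $\times_{i=1}^k\X(d_i)\times\X^{af}(1,f)$ (where $b_f\colon\X^{af}(1,f)\to\X^a(1,f)$ is the analogous projection), and hence, by flat base change, that $b^*$ intertwines the Hall products~\eqref{prel:halla} for $Q^a$ and $Q^{af}$:
\[
 b^*\circ\bigl(p_{\lambda_S \ast}\,q_{\lambda_S}^*\bigr)\;\cong\;\bigl(p_{\lambda_S \ast}\,q_{\lambda_S}^*\bigr)\circ\bigl(\id\times b_f^*\bigr),
\]
the left Hall product being the one on $\X^a(1,d)$ and the right one the one on $\X^{af}(1,d)$, with $b_f^*$ applied on the last tensor factor.

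Granting this, generation in~\eqref{SOD33} is formal. By construction $\mathbb{E}^a(1,d;\delta)$ is generated by $b^*$ of the generators $\OO_{\X^a(1,d)}\otimes\Gamma_{GL(d)}(\chi)$ of $\mathbb{D}^a(1,d;\delta)$, and $\mathbb{F}^a(1,f;\delta')=\langle b_f^*\mathbb{M}^a(1,f;\delta')\rangle$. Applying the exact functor $b^*$ to Proposition~\ref{generationprop} and using the intertwining above, each generator of $\mathbb{E}^a(1,d;\delta)$ lies in the subcategory generated by the images of the Hall products $\bigl(\boxtimes_{i=1}^k\mathbb{M}(d_i)_{w_i}\bigr)\boxtimes\mathbb{F}^a(1,f;\delta')\to D^b(\X^{af}(1,d))$ with $(w_i)_{i=1}^k$ as in Theorem~\ref{thm2}; conversely, by the intertwining together with Proposition~\ref{prop:inside}, those images are $b^*$ of objects of $\mathbb{D}^a(1,d;\delta)$, hence lie in $\mathbb{E}^a(1,d;\delta)$.

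For the orthogonality and the full-faithfulness of the Hall functors — note that $b^*$ is not fully faithful on all of $D^b(\X^a(1,d))$, so the decomposition of Theorem~\ref{thm2} cannot be transported along $b^*$ without further argument — I would re-run the proof of Proposition~\ref{prop311} verbatim on $\X^{af}(1,d)$. That proof reduces, via adjunction and an application of~\cite[Proposition~4.2]{P} together with Proposition~\ref{bbw}, to the inequalities $\langle\tau_e,(\chi'-\sigma_I)^+\rangle>\langle\tau_e,\chi\rangle$ (for non-empty $I$ in the full-faithfulness case), where $I$ runs over multisets of the weights of $R^a(1,d)$ that are negative on $\lambda_{S'}$ (resp.~on $\lambda_S$). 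On $\X^{af}(1,d)$ these are replaced by the analogous multisets for $R^{af}(1,d)$; but $V^{\lambda_{S'}<0}=0$ (resp.~$V^{\lambda_S<0}=0$), so the multisets are unchanged and the inequalities are exactly those already proved in Proposition~\ref{prop37}. The remark in the proof of Proposition~\ref{prop311} that~\cite[Proposition~4.2]{P} also controls $\tau_e$-weights applies here as well, since $\tau_e$ still fixes $R^{af}(1,d)^{\lambda_S}$ and acts with non-negative weights on $R^{af}(1,d)^{\lambda_S<0}$. Combining generation with this orthogonality, using the order $O$ of Subsection~\ref{comparison}, yields the semiorthogonal decomposition~\eqref{SOD33}.

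I expect the only genuine work to be bookkeeping: checking that the square relating the attracting stacks of $\X^{af}(1,d)$ and $\X^a(1,d)$ is truly Cartesian, so that flat base change applies and $b^*$ really intertwines the two Hall products, and that the extra copy of $V$ introduces no new negative $\lambda_S$- or $\tau_e$-weights. Both rest on the single fact that $\langle\lambda_S,\beta_i\rangle\geq 0$ for the cocharacters in~\eqref{cocharacterS}; once this is in place the rest is a transcription of the proofs of Propositions~\ref{generationprop} and~\ref{prop311}.
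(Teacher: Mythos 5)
Your argument is correct and is essentially the expansion of what the paper's one-line proof outsources to the cited results \cite[Proposition 3.10, Corollary 3.11]{PT0}: transfer the decomposition of Theorem \ref{thm2} along the projection $b$, using that the extra summand $V$ has only non-negative weights for the cocharacters $\lambda_S$ and $\tau_e$, so the Hall products are intertwined by $b^*$ and the weight multisets controlling orthogonality are unchanged. You also correctly flag the one point where care is needed (that $b^*$ is not fully faithful, so orthogonality must be re-proved on $\X^{af}(1,d)$ rather than transported), which is precisely why the paper invokes \cite[Proposition 3.10]{PT0} rather than a formal pullback.
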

    
    \begin{proof}
    By Theorem~\ref{thm2},
        the result follows as \cite[Corollary 3.11]{PT0} using \cite[Proposition 3.10]{PT0}. 
    \end{proof}

\begin{prop}\label{prop313}
    Let
    $\mu \in \mathbb{R}$
    with $2\mu l \notin \mathbb{Z}$
    for $1\leq l \leq d$
    and let $\delta:=\mu \sigma_d$.
    Consider the inclusions
    \[\iota\colon I^a(d)\hookrightarrow \X^{af}(1, d),\, \iota'\colon P^a(d)\hookrightarrow \X^{af}(1,d).\] Then the following functors are equivalences
    \begin{align*}
    \mathbb{E}^a(1, d; \delta)&\hookrightarrow D^b(\X^{af}(1, d))\xrightarrow{\iota^*} D^b(I^a(d)),\\
        \mathbb{F}^a(1, d; \delta)&\hookrightarrow D^b(\X^{af}(1, d))\xrightarrow{\iota'^*} D^b(P^a(d)).
    \end{align*}
\end{prop}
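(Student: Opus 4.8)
The plan is to realize both equivalences as instances of the magic window / quasi-BPS window theorem for GIT quotients of the affine space $R^{af}(1,d)$ by $GL(d)$ with respect to the characters $\chi_0^{\pm 1}$. Recall that $I^a(d)$ and $P^a(d)$ are the $\chi_0$- and $\chi_0^{-1}$-semistable loci (which here equal the stable loci and are in fact smooth quasi-projective \emph{varieties}, not just stacks, by Remark \ref{rmk:moduli} together with the fact that a representation generated by a single vector from $\mathbb{C}\langle A,B,C\rangle$ has trivial stabilizer). Thus $\iota\colon I^a(d)\hookrightarrow \X^{af}(1,d)$ and $\iota'\colon P^a(d)\hookrightarrow \X^{af}(1,d)$ are the inclusions of the semistable substacks, and by Teleman quantization / the window philosophy (in the form used in \cite{hls}, or \cite{PT0} for the analogous statement) the restriction functor $\iota^*$ is fully faithful on the subcategory generated by those $\mathcal{O}_{\X^{af}(1,d)}\otimes\Gamma_{GL(d)}(\chi)$ whose weights $\chi+\rho+\delta$ satisfy the numerical condition cutting out $\mathbf{V}^a(1,d)$, and likewise an equivalence onto all of $D^b$ of the semistable locus provided the window is \emph{wide enough} — precisely the width of $\mathbf{V}^a(1,d)$ — while it is \emph{not too wide}, which is where the genericity hypothesis $2\mu l\notin\mathbb{Z}$ enters to ensure no weight lands on the boundary wall of the relevant chamber. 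So the two bullet points of the proof will be: (a) fully faithfulness of $\iota^*$ and $\iota'^*$ on $\mathbb{E}^a(1,d;\delta)$ and $\mathbb{F}^a(1,d;\delta)$ respectively; (b) essential surjectivity.

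For step (a), I would argue exactly as in \cite[Section 3]{PT0}: by adjunction, for generators $\mathcal{O}\otimes\Gamma(\chi)$ and $\mathcal{O}\otimes\Gamma(\chi')$ of the window category the restriction map on $\Hom$ is computed via the Koszul (Čech) resolution of the unstable locus, and the cohomology vanishing needed reduces to showing that every weight occurring in that resolution, when compared against $\chi$ via the Kempf–Ness cocharacters destabilizing $R^{af}(1,d)^{\chi_0\text{-unstable}}$, is strictly dominated. For the $\chi_0$-side the destabilizing cocharacters are the antidominant ones, and the condition $\chi+\rho+\delta\in\mathbf{V}^a(1,d)$ together with $2\mu l\notin\mathbb{Z}$ forces the relevant weight pairings to avoid the boundary, yielding the strict inequality; the $\chi_0^{-1}$-side is symmetric under the duality $V\leftrightarrow V^\vee$ (equivalently, the sign flip on the polytope) restricting to $\mathbf{W}^a$. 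One then invokes Proposition \ref{bbw} to express $p_{\lambda*}q_\lambda^*$ of the generators and checks the weight bounds; this is the same bookkeeping used in the proof of Proposition \ref{prop311}.

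For step (b), I would show the restricted generators already generate $D^b$ of the GIT quotient. This is the standard argument: for any dominant $\chi$ the object $\mathcal{O}_{\X^{af}(1,d)}\otimes\Gamma(\chi)$ can be replaced, modulo objects supported on the unstable locus (which restrict to $0$), by one whose weight lies in the window $\mathbf{V}^a(1,d)$ — one repeatedly applies the "wall-crossing" Koszul functor that subtracts $\sigma_I$ for $I$ a sub-multiset of the weights with negative pairing against the destabilizing cocharacter, exactly the move in the proof of Proposition \ref{generationprop}, and the induction on $(-p(\chi),e(\chi))$ there shows this terminates inside the window. Since $D^b(\X^{af}(1,d))$ is generated by all $\mathcal{O}\otimes\Gamma(\chi)$, their restrictions generate $D^b(I^a(d))$, and with (a) this gives the equivalence $\mathbb{E}^a(1,d;\delta)\xrightarrow{\sim}D^b(I^a(d))$; the $P$-side is identical with $\mathbf{V}^a$ replaced by $\mathbf{W}^a$ and "antidominant" by "dominant".

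The main obstacle I expect is not the formal structure — which is a well-trodden window argument — but pinning down the \emph{exact} numerical width: one must verify that $\mathbf{V}^a(1,d)$ (resp. $\mathbf{W}^a(1,d)$) is precisely the half-open polytope whose lattice points biject with a full exceptional-type window for $\chi_0$- (resp. $\chi_0^{-1}$-) stability, with the half-openness dictated by where the genericity condition $2\mu l\notin\mathbb{Z}$ lets us break ties. Getting the endpoints right — i.e. that the $\mathrm{sum}(-\beta_k,\beta_k]$ versus $\mathrm{sum}[-\beta_k,\beta_k]$ discrepancy between $\mathbf{W}^a$ and $\mathbf{V}^a$ matches exactly the $+1$ extra edge $e$ in $Q^{af}$ over $Q^a$ and the asymmetry between the two GIT chambers — is the delicate point, and it is what makes the shift $\delta'=(\mu-d+d')\sigma_{d'}$ and the bound \eqref{boundsalpha2} come out as stated.
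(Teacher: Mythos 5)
Your step (a) is in the same spirit as the paper's argument (the paper implements it by quoting Halpern--Leistner's window theorem \cite[Theorem 2.10]{halp} to get an equivalence $\mathbb{G}_\eta \simeq D^b(I^a(d))$ and then checking, via \cite[Lemma 2.9]{hls} and the genericity $2\mu l\notin\mathbb{Z}$, that $\mathbb{E}^a(1,d;\delta)\subset\mathbb{G}_\eta$), so the weight estimates you sketch are plausible there. The genuine gap is in your step (b). The representation $R^{af}(1,d)$ is \emph{not} quasi-symmetric ($a+1$ arrows from $0$ to $1$ versus $a$ arrows back), so no magic-window theorem applies to it directly, and your proposed substitute --- ``replace $\mathcal{O}\otimes\Gamma(\chi)$, modulo objects supported on the unstable locus, by one whose weight lies in $\mathbf{V}^a(1,d)$, using the induction on $(-p(\chi),e(\chi))$ of Proposition \ref{generationprop}'' --- does not do this: that induction only treats dominant $\chi$ with $\chi+\rho+\delta$ \emph{already} in $\mathbf{V}^a(1,d)$, and it proves generation of $\mathbb{D}^a(1,d;\delta)$ by Hall products, not that an arbitrary dominant weight can be pushed into the polytope up to complexes killed by $\iota^*$. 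You would need a termination-and-containment argument for arbitrary weights, which is exactly the content of the magic window theorem \cite[Theorem 3.2]{hls}, available only for (quasi-)symmetric representations.

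The missing idea, which is how the paper gets essential surjectivity, is to trade $\X^{af}(1,d)$ for the symmetric stacks on either side of it: the forgetful maps $c\colon \X^{a+1}(1,d)\to\X^{af}(1,d)$ and $b\colon \X^{af}(1,d)\to\X^{a}(1,d)$ are affine bundles, and by the explicit description of (semi)stability (Remark \ref{rmk:moduli}) they restrict to affine bundles $\X^{a+1}(1,d)^{\chi_0\text{-ss}}\to I^a(d)$ and $P^a(d)\to\X^{a}(1,d)^{\chi_0^{-1}\text{-ss}}$, since $\chi_0$-stability ignores the arrows $1\to 0$ and $\chi_0^{-1}$-stability ignores the arrows $0\to 1$. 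One then applies the magic window theorem to the symmetric representations $R^{a+1}(1,d)$ and $R^{a}(1,d)$ and pulls back along these bundles; this is also what makes the DT/PT asymmetry of the two polytopes ($\mathbf{V}^a$ versus $\mathbf{W}^a$) come out correctly, something your closing paragraph identifies as delicate but leaves unresolved. Without this detour (or an independent proof of a magic-window statement for the non-symmetric $R^{af}(1,d)$), your step (b) does not go through.
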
    
    
\begin{proof}
    The same proof as in \cite[Proposition 3.13]{PT0} applies here. We refer to loc. cit. for full details. 
    The main tools used in the proof are window categories \cite{halp} and the magic window theorem for symmetric representations \cite{hls}.
    
    The Kempf-Ness loci for $I^a(d)$ and $P^a(d)$ are attracting loci for the cocharacters $\tau_e^{-1}$ and $\tau_e$, respectively, see \cite[Lemma 3.14]{PT0}. 
    We first explain fully faithfulness of the first functor.
    Let $I$ be the set of Kempf-Ness strata and consider the stratification \[\X^{af}(1, d)=I^a(d)\sqcup \bigsqcup_{i\in I}\mathcal{S}_i.\] 
    Let $\lambda_i=\tau_{e_i}^{-1}$ be the cocharacter corresponding to  $\mathcal{S}_i$, let $\mathcal{Z}_i:=\mathcal{S}_i^{\lambda_i}$, and let 
    \begin{align*}
    \eta_i :=\langle \lambda_i, (\mathcal{X}^{af}(1, d)^{\vee})^{\lambda_i>0} \rangle=
    -\langle \lambda_i, \mathcal{X}^{af}(1, d)^{\lambda_i<0} \rangle=(a+1)e_i+2e_i(d-e_i).  
\end{align*}
    Let $\mathbb{G}_{\eta}\subset D^b\left(\X^{af}(1, d)\right)$ be the subcategory of complexes $\mathcal{F}$ such that \begin{align}\label{wt:condG}
    \mathrm{wt}_{\lambda_i}(\mathcal{F}|_{\mathcal{Z}_i})
    +\left\langle \lambda_i, \delta+\frac{1}{2}d\tau_d \right\rangle \subset 
    \left[-\frac{1}{2}\eta_i, \frac{1}{2}\eta_i \right)
\end{align}
for all $i\in I$.
By \cite[Theorem 2.10]{halp}, the restriction $\iota^*$ induces an equivalence
\begin{equation}\label{iotaGv}
\iota^* \colon \mathbb{G}_{\eta}\xrightarrow{\sim} D^b(I^a(d)).
\end{equation}
On the other hand, 
as $\lambda_i=\tau_{e_i}^{-1}$,
we have that
$\X^{af}(1, d)^{\lambda_i<0}=\X^{a+1}(1, d)^{\lambda_i<0}$, and thus 
\begin{align}\label{vlambda}
    \eta_i=\langle \lambda_i, \X^{a+1}(1, d)^{\lambda_i>0} \rangle. 
\end{align}
By the assumption $2\mu l \notin \mathbb{Z}$
for $1\leq l\leq d$
and $\langle \lambda_i, \delta \rangle=-\mu e$, 
$\langle \lambda_i, d\tau_d\rangle=-e$, 
the condition (\ref{wt:condG}) is equivalent to 
\begin{align}\notag
    \mathrm{wt}_{\lambda_i}(\mathcal{F}|_{\mathcal{Z}_i})
    +\left\langle \lambda_i, \delta+\frac{1}{2}d\tau_d \right\rangle \subset 
    \left[-\frac{1}{2}\eta_i, \frac{1}{2}\eta_i \right]. 
\end{align}
Thus 
$\mathbb{E}(d; \delta)\subset \mathbb{G}_{\eta}$ by (\ref{vlambda}) and~\cite[Lemma~2.9]{hls}, noting 
that $R^{a+1}(1, d)$ is a symmetric $GL(d)$-representation. 
    Fully faithfulness of the second functor follows similarly.
    
We next discuss essential surjectivity. 
We have the projection maps 
\begin{align*}
\mathcal{X}^{a+1}(1, d) \stackrel{c}{\to} \mathcal{X}^{af}(1, d)
\stackrel{b}{\to} \mathcal{X}^a(1, d)
\end{align*}
where $c$ (resp.~$b$)
forgets 
one arrow from $1$ to $0$, (resp.~$0$ to $1$). 
The maps $c$, $b$ are 
affine bundles of relative dimension $d$. 
By Remark~\ref{rmk:moduli}, the $\chi_0$-stability (resp.~$\chi_0^{-1}$-stability) on $R^{af}(1, d)$
does not impose constraint on maps corresponding to edges from $1$ to $0$
(resp.~$0$ to $1$).
We have similar descriptions of $\chi_0$-semistable loci for $R^{a+1}(1,d)$ (resp.~$\chi^{-1}_0$-semistable loci for $R^a(1,d)$), see~\cite[Lemma~7.10]{Toddbir}. 
Therefore the projection maps $c$, $b$
restrict to 
affine bundles of relative dimension $d$
\begin{align*}
    c \colon \mathcal{X}^{a+1}(1, d)^{\chi_0 \text{-ss}} \to I^a(d), \ 
    b \colon P^a(d) \to \mathcal{X}^a(1, d)^{\chi_0^{-1}\text{-ss}}. 
\end{align*}
Then the essential surjectivity follows from the magic window theorem \cite[Theorem 3.2]{hls} for 
the symmetric stacks
$\mathcal{X}^{a+1}(a, d)$, 
$\mathcal{X}^a(1, d)$ as in \cite[Proof of Proposition 3.13]{PT0}. 
\end{proof}    
   
\begin{proof}[Proof of Theorem \ref{thm2bis}]
    The claim follows from Propositions \ref{prop312} and \ref{prop313}, noting 
    that the most left and right inequalities 
    in (\ref{boundsalpha2}) are never equalities 
    by the condition $2\mu l \notin \mathbb{Z}$
    for $1\leq l \leq d$. 
\end{proof}

\begin{proof}[Proof of Corollary \ref{thm12}]
    Taking the Grothendieck group of the categories in \eqref{SOD11}, we obtain the isomorphism
    \begin{equation}\label{decoK}
    K(I^a(d))\cong \bigoplus K\left((\boxtimes_{i=1}^k \mathbb{M}(d_i)_{w_i})\boxtimes P^a(d')\right),
     \end{equation}
    where the sum of the right hand side is as in Theorem \ref{thm2bis}. 
    Choose $d'\leq d$. By \cite[Theorem 1.1]{PT0}, there is a semiorthogonal decomposition 
    \begin{equation}\label{SODdprime}
    D^b(\mathrm{NHilb}(d-d'))=\Big\langle \boxtimes_{i=1}^k \mathbb{M}(d_i)_{w_i}\Big\rangle
    \end{equation}
    where the right hand side is after all partitions $(d_i)_{i=1}^k$ of $d-d'$ and integers $(w_i)_{i=1}^k$ such that for $v_i:=w_i+d_i\left(d'+\sum_{j>i} d_j-\sum_{j<i}d_j\right)$, the inequality \eqref{boundsalpha} is satisfied. There is thus a semiorthogonal decomposition 
    \[D^b\left(\mathrm{NHilb}(d-d')\times P^a(d')\right)=
    \Big\langle \left(\boxtimes_{i=1}^k \mathbb{M}(d_i)_{w_i}\right)\boxtimes D^b(P^a(d'))\Big\rangle,\] and thus a decomposition in K-theory:
    \[K\left(\mathrm{NHilb}(d-d')\times P^a(d')\right)\cong \bigoplus K\left(( \boxtimes_{i=1}^k \mathbb{M}(d_i)_{w_i})\boxtimes P^a(d')\right),\] where the right hand side is as for \eqref{SODdprime}. By \eqref{decoK}, we obtain 
    \begin{equation}\label{K}
        K(I^a(d))\cong \bigoplus_{d'=0}^d K\left(\mathrm{NHilb}(d-d')\times P^a(d')\right).
    \end{equation}
    We finally show that there is a Künneth isomorphism
    \begin{equation}\label{Kunneth}
        K\left(\mathrm{NHilb}(d-d')\times P^a(d')\right)\cong  K(\mathrm{NHilb}(d-d'))\otimes K(P^a(d')),
    \end{equation}
    which implies the conclusion by \eqref{K}. We use \cite[Theorem 2.10]{halp} to choose window subcategories $\mathbb{G}\subset D^b\left(\X^f(1, d-d')\right)$ and $\mathbb{H}\subset D^b\left(\mathcal{X}^{af}(1, d')\right)$ such that the restriction maps to the stable locus give equivalences 
    \[\mathbb{G}\xrightarrow{\sim}D^b(\mathrm{NHilb}(d-d')),\,
    \mathbb{H}\xrightarrow{\sim}D^b(P^a(d')).\] 
    Then, by loc. cit., the category $\mathbb{G}\boxtimes\, \mathbb{H}$ is part of a semiorthogonal decomposition of $D^b\left(\X^f(1, d-d')\times\mathcal{X}^{af}(1, d')\right)$ such that the restriction map gives an equivalence
    \[\mathbb{G}\boxtimes \mathbb{H}\xrightarrow{\sim}D^b\left( \mathrm{NHilb}(d-d')\times P^a(d')\right).\]
    The following diagram commutes and the spaces on the left column are summands of the spaces of the right column:
    \begin{equation*}
        \begin{tikzcd}
            K(\mathbb{G})\otimes K(\mathbb{H})\arrow[d, "a"]\arrow[r, hook]& K(\X^f(d-d'))\otimes K(\mathcal{X}^{af}(d'))
            \arrow[d, "b"]\arrow[l, two heads, shift left=0.13 cm]\\
            K(\mathbb{G}\boxtimes\,\mathbb{H})\arrow[r, hook]& K(\X^f(d-d')\times\mathcal{X}^{af}(d')) \arrow[l, two heads, shift left=0.13 cm].
        \end{tikzcd}
    \end{equation*}
    To show that $a$ is an isomorphism, it suffices to show that $b$ is an isomorphism, which is clear because  $K(\mathcal{X}^{af}(1, d'))\cong K_{GL(d')}(\text{pt})$,
    $K(\X^f(1, d-d'))\cong K_{GL(d-d')}(\text{pt})$,
    and $K(\X^f(1, d-d')\times\mathcal{X}^{af}(d'))\cong K_{GL(d-d')\times GL(d')}(\text{pt})$.
\end{proof}

\subsection{A variant of Theorem~\ref{thm2bis}}
We discuss a slight extension of Theorem~\ref{thm2bis} for quivers obtained from $Q^{af}$ by adding loops at the vertex $0$ and imposing a super-potential. We will use this extension in the proof of Theorem \ref{thm:exam}.

Let $Q^{af, N}$ be the quiver adding $N$-loops at the vertex $0$. 
Then the affine space of representations of $Q^{af, N}$ 
of dimension $(1, d)$ is 
\begin{align*}
    R^{af, N}(1, d)=\mathbb{C}^N \times R^{af}(1, d). 
\end{align*}
Let $\widetilde{W}$ be a super-potential on $Q^{af, N}$ satisfying \[\widetilde{W}|_{Q}=X[Y, Z],\] where 
$Q \subset Q^{af, N}$ is the full subquiver consisting 
of the vertex $\{1\}$, i.e. it is the triple loop quiver. 
We define 
\begin{align*}
    \mathcal{DT}^{a, N}_{\widetilde{W}}(d) &:= \mathrm{MF}\left(R^{af, N}(1, d)^{\chi_0 \text{-ss}}/GL(d), \Tr \widetilde{W}  \right), \\
    \mathcal{PT}^{a, N}_{\widetilde{W}}(d) &:= \mathrm{MF}\left(R^{af, N}(1, d)^{\chi_0^{-1} \text{-ss}}/GL(d), \Tr \widetilde{W}  \right).
\end{align*}
By applying $D^b(\mathbb{C}^N)\boxtimes$
to the semiorthogonal decomposition in Theorem~\ref{thm2bis}, and 
the super-potential $\Tr \widetilde{W}$, see~\cite[Proposition~2.1]{P0}, we have the following 
corollary: 
\begin{cor}\label{cor:variant}
Let $\mu \in \mathbb{R}$ such that 
$2\mu l \notin \mathbb{Z}$
for $1\leq l \leq d$. 
 There is a semiorthogonal decomposition 
    \begin{equation*}
  \mathcal{DT}^{a, N}_{\widetilde{W}}(d)=\Big\langle  \left( \boxtimes_{i=1}^k \mathbb{S}(d_i)_{w_i}\right)\boxtimes 
  \mathcal{PT}^{a, N}_{\widetilde{W}}(d')\Big\rangle.
   \end{equation*}
    The right hand side is after all $d'\leq d$, partitions $(d_i)_{i=1}^k$ of $d-d'$, and integers $(w_i)_{i=1}^k$ such that for $v_i:=w_i+d_i\left(d'+\sum_{j>i} d_j-\sum_{j<i}d_j\right)$, we have
    \begin{equation*}
    -1-\mu-\frac{a}{2}< \frac{v_1}{d_1}<\cdots<\frac{v_k}{d_k}<-\mu-\frac{a}{2}.
\end{equation*}
    The order of the semiorthogonal summands is the same as in Theorem~\ref{thm2bis}. 
\end{cor}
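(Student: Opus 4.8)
The plan is to deduce Corollary~\ref{cor:variant} from Theorem~\ref{thm2bis} by two formal operations: adjoining the inert factor $\mathbb{C}^N$, and then passing to matrix factorizations for the potential $\Tr\widetilde{W}$. For the first, note that $R^{af,N}(1,d)=\mathbb{C}^N\times R^{af}(1,d)$ and, by Remark~\ref{rmk:moduli}, the $\chi_0^{\pm 1}$-semistability condition only involves the arrows incident to the vertex $1$, so the $N$ loops at the vertex $0$ impose no constraint; hence $R^{af,N}(1,d)^{\chi_0\text{-ss}}=\mathbb{C}^N\times R^{af}(1,d)^{\chi_0\text{-ss}}$ and likewise for $\chi_0^{-1}$, and the relevant quotient stacks are $\mathbb{C}^N\times I^a(d)$ and $\mathbb{C}^N\times P^a(d')$. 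Applying $D^b(\mathbb{C}^N)\boxtimes(-)$ to \eqref{SOD11} and using that $\boxtimes$ is symmetric monoidal to move the $D^b(\mathbb{C}^N)$-factor past the quasi-BPS factors, one obtains
\[
D^b\!\big(R^{af,N}(1,d)^{\chi_0\text{-ss}}/GL(d)\big)=\Big\langle \big(\boxtimes_{i=1}^k\mathbb{M}(d_i)_{w_i}\big)\boxtimes D^b\!\big(R^{af,N}(1,d')^{\chi_0^{-1}\text{-ss}}/GL(d')\big)\Big\rangle,
\]
with the same indexing set and order as in Theorem~\ref{thm2bis} and with fully faithful functors given by the categorical Hall products (the $D^b(\mathbb{C}^N)$-linear extensions of \eqref{prel:halla}).

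Next I would apply $\mathrm{MF}(-,\Tr\widetilde{W})$. The relevant input is \cite[Proposition~2.1]{P0}: a semiorthogonal decomposition of the derived category of a smooth quotient stack whose components are images of categorical Hall products induces, for any $GL$-invariant potential $w$, a semiorthogonal decomposition of $\mathrm{MF}(-,w)$ whose components are matrix factorizations with factors in the original components, equipped with $w$ restricted along the corresponding Hall diagram. What makes this work for an arbitrary quiver potential --- not only for a Thom--Sebastiani sum --- is that for the antidominant cocharacter $\lambda_S$ attached to a block decomposition $d=d_1+\cdots+d_k+d'$ the function $\Tr\widetilde{W}$ is $\lambda_S$-invariant, hence its restriction to the attracting stack is pulled back via $q_{\lambda_S}$ from its restriction to the fixed stack, i.e.\ $p_{\lambda_S}^{*}\Tr\widetilde{W}=q_{\lambda_S}^{*}\Tr\widetilde{W}^{\lambda_S}$. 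Applying this to the decomposition above gives a semiorthogonal decomposition of $\mathrm{MF}\big(R^{af,N}(1,d)^{\chi_0\text{-ss}}/GL(d),\Tr\widetilde{W}\big)=\mathcal{DT}^{a,N}_{\widetilde{W}}(d)$.

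It then remains to identify the summands. The fixed stack is $\big(\prod_{i=1}^k\X(d_i)\big)\times\big(R^{af,N}(1,d')/GL(d')\big)$, the arrows joining a block $d_i$ to the vertex $0$ having nonzero $\lambda_S$-weight and vanishing there, so $\Tr\widetilde{W}^{\lambda_S}$ splits as $\sum_{i=1}^k\Tr\big(\widetilde{W}|_Q\big)_{d_i}+\Tr\widetilde{W}_{(1,d')}$; the hypothesis $\widetilde{W}|_Q=X[Y,Z]$ then makes the $i$-th summand equal to $\Tr X_i[Y_i,Z_i]$. Hence the factor $\mathbb{M}(d_i)_{w_i}$ inherits the potential $\Tr W=\Tr X[Y,Z]$ and contributes $\mathrm{MF}(\mathbb{M}(d_i)_{w_i},\Tr W)=\mathbb{S}(d_i)_{w_i}$ (Subsection~\ref{gradingMF}), while the factor $D^b\big(R^{af,N}(1,d')^{\chi_0^{-1}\text{-ss}}/GL(d')\big)$ inherits $\Tr\widetilde{W}$ in dimension $(1,d')$ and contributes $\mathcal{PT}^{a,N}_{\widetilde{W}}(d')$ by definition. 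The indexing data and the order of the summands are untouched, as they depend only on $(d_i,w_i,d,d')$; in particular the extreme inequalities in \eqref{boundsalpha} remain strict by the hypothesis $2\mu l\notin\mathbb{Z}$ for $1\leq l\leq d$, exactly as in Theorem~\ref{thm2bis}. The step deserving most care is the second one: one must confirm that the decomposition produced in Theorem~\ref{thm2bis} (built via Propositions~\ref{prop312} and~\ref{prop313} from Hall products and window restriction to the semistable loci) is presented in a shape to which \cite[Proposition~2.1]{P0} applies verbatim, and that the compatibility $p_{\lambda_S}^{*}\Tr\widetilde{W}=q_{\lambda_S}^{*}\Tr\widetilde{W}^{\lambda_S}$ holds on every attracting stack appearing in the decomposition; this is automatic from $GL(d)$-invariance of $\Tr\widetilde{W}$ --- a $\lambda_S$-invariant regular function on an attracting locus is pulled back from the fixed locus --- but, unlike the $a=N=0$ situation, $\Tr\widetilde{W}$ now has genuine cross terms between the $\mathbb{C}^N$-loops and the quiver arrows, so it is the point one should verify rather than take on faith.
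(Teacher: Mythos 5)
Your argument is correct and is essentially the paper's own proof: the corollary is stated there as following from Theorem~\ref{thm2bis} by applying $D^b(\mathbb{C}^N)\boxtimes(-)$ and then passing to matrix factorizations for $\Tr\widetilde{W}$ via \cite[Proposition~2.1]{P0}. Your additional verifications (factorization of the $\chi_0^{\pm1}$-semistable loci, invariance of $\Tr\widetilde{W}$ forcing $p_{\lambda_S}^{*}\Tr\widetilde{W}=q_{\lambda_S}^{*}\Tr\widetilde{W}^{\lambda_S}$, and the identification of the induced potential on each block with $\Tr X[Y,Z]$ via $\widetilde{W}|_Q=X[Y,Z]$) are exactly the details the paper leaves implicit.
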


The analogous conclusion holds if we replace $\mathbb{C}^N$ by an open subset in all the constructions and statements above.

\section{The categorical DT/PT correspondence for \texorpdfstring{$\mathbb{C}^3$}{C3}}\label{sec:examplelocal}
In this section, we prove Theorems~\ref{thm:globalcrit}
and \ref{thm:exam} and Corollary~\ref{cor14}. 
We give explicit descriptions of DT/PT moduli 
spaces on $\mathbb{C}^3$
with reduced supports via extended ADHM quivers. 
The ADHM quiver 
is a quiver with a relation (depicted in Figure~3)
which 
was used to construct
framed moduli spaces of instantons on $\mathbb{P}^2$, 
see~\cite{NLecture}. 
\begin{figure}[H]
\begin{align*}
	&\begin{tikzpicture}			
			\draw[->, >={Latex[round]}] 	
			(-4, 0) to [bend left=30] (0, 0);
					\draw[->, >={Latex[round]}] 	
				(0, 0) to [bend left=30] (-4, 0);
				\draw[->, >={Latex[round]}] (0, 0) arc (-180:0:0.4) ;
		\draw (0.8, 0) arc (0:180:0.4);
		\draw[->, >={Latex[round]}] (0, 0) arc (-180:0:0.6) ;
		\draw (1.2, 0) arc (0:180:0.6);
		\draw[fill=black] (0, 0) circle (0.05);
		\draw[fill=black] (-4, 0) circle (0.05);
			\draw (-2,0.8) node[circle] {$u$};
				\draw (-2,-0.8) node[circle] {$v$};
				\draw (0.5,0) node[circle] {$A$};
					\draw (1.4,0) node[circle] {$B$};
	\end{tikzpicture} \\
	&[A, B]+u \circ v=0. 
	\end{align*}
	\caption{ADHM quiver}
	\end{figure}

We use an interpretation of representations of 
the ADHM quiver in terms of perverse coherent sheaves, 
and then relate them to moduli spaces of 
pairs of one-dimensional sheaves with sections. 

\subsection{Perverse coherent sheaves via the ADHM construction}
Below we regard $\mathbb{C}^2$ as an open subset of 
$\mathbb{P}^2$ consisting of $[X \colon Y \colon Z]$ such that $Z \neq 0$, 
and set $l_{\infty}=(Z=0) \subset \mathbb{P}^2$. 

Let $\mathcal{T} \subset \Coh(\mathbb{P}^2)$ be the subcategory of zero-dimensional sheaves and let
$\mathcal{F} \subset \Coh(\mathbb{P}^2)$ be the subcategory of sheaves $F$ such that $\Hom(\mathcal{T}, F)=0$. 
The pair $(\mathcal{T}, \mathcal{F})$ is a torsion pair on 
$\Coh(\mathbb{P}^2)$, and we define $\mathcal{A}$ to be the associated
tilting abelian category
\begin{align*}
	\mathcal{A}=\langle \mathcal{F}, \mathcal{T}[-1] \rangle \subset D^b(\mathbb{P}^2). 
	\end{align*}
An object in $\mathcal{A}$ is an example of a perverse coherent sheaf on $\mathbb{P}^2$
introduced in~\cite{Kashi, ArBez}. 
We denote by $\mathfrak{U}_{\mathbb{C}^2}(d)$ the derived moduli 
stack of objects $I \in \mathcal{A}$ with an isomorphism 
$I|_{l_{\infty}}\cong \mathcal{O}_{l_{\infty}}$ such that $\ch_2(I)=-d$. 
Note that $I$ fits into an exact sequence in $\mathcal{A}$:
\begin{align}\label{exact:I}
0\to I_Z \to I \to Q[-1] \to 0,
	\end{align}
where $I_Z=\mathcal{H}^0(I)$ is an ideal sheaf of a zero-dimensional 
subscheme $Z \subset \mathbb{C}^2$ and 
$Q=\mathcal{H}^1(I)$ is zero-dimensional on $\mathbb{C}^2$
such that $\lvert Q \rvert+\lvert Z \rvert=d$. 
The stack $\mathfrak{U}_{\mathbb{C}^2}(d)$ is an Artin stack of finite type 
whose classical truncation $\mathfrak{U}^{\mathrm{cl}}_{\mathbb{C}^2}(d)$
admits a good moduli space
\begin{align*}
	\mathfrak{U}^{\mathrm{cl}}_{\mathbb{C}^2}(d) \to \mathrm{Sym}^d(\mathbb{C}^2),\, I\mapsto \mathrm{Supp}(Q)+\mathrm{Supp}(Z).
	\end{align*} 

There is an explicit description of $\mathfrak{U}^{\mathrm{cl}}_{\mathbb{C}^2}(d)$ 
via the ADHM quiver. 
Let $V$ be a $d$-dimensional vector space and let $\mathfrak{g}=\Hom(V, V)$. 
We consider the following $GL(V)$-equivariant morphism:
\begin{align*}
	\nu \colon V \oplus V^{\vee} \oplus \mathfrak{g}^{\oplus 2} &\to \mathfrak{g},\\
	(u, v, A, B) &\mapsto [A, B]+u \circ v.
	\end{align*}
Let $\nu^{-1}(0)$ be the derived zero locus of $\nu$. 
The derived moduli stack of the ADHM quiver with relation 
is given by 
\begin{align*}
	\mathfrak{N}_{\mathbb{C}^2}(d) :=\nu^{-1}(0)/GL(V). 
\end{align*}
Then there is an equivalence of derived stacks, 
see~\cite[Theorem~5.7]{BFG} 
	\begin{align}\label{Ups}
		\Upsilon \colon \mathfrak{N}_{\mathbb{C}^2}(d) \stackrel{\sim}{\to}
	\mathfrak{U}_{\mathbb{C}^2}(d).
	\end{align}
For a point $(u, v, A, B) \in \nu^{-1}(0)$, 
the corresponding object in $\mathfrak{U}_{\mathbb{C}^2}(d)$ is given by 
the complex on $\mathbb{P}^2$:
\begin{align}\label{perv:V}
	0 \to V\otimes \mathcal{O}_{\mathbb{P}^2}(-1)
	\stackrel{\phi}{\to} (V^{\oplus 2} \oplus \mathbb{C})\otimes \mathcal{O}_{\mathbb{P}^2}
	\stackrel{\psi}{\to} V \otimes \mathcal{O}_{\mathbb{P}^2}(1) \to 0.
	\end{align}
Here, the maps $\phi$ and $\psi$ are given by 
\begin{align}\label{alphabeta}
	\phi=\begin{pmatrix}
		ZA-X\id \\
		ZB-Y\id \\
		Zv
		\end{pmatrix}, \
	\psi=\begin{pmatrix}
		-ZB+Y\id, ZA-X\id, Zu
		\end{pmatrix}. 
	\end{align}
It is proved in~\cite[Theorem~5.7]{BFG} that the 
above correspondence gives an isomorphism (\ref{Ups}) on 
classical truncations.  
The above construction and its inverse in~\cite[Theorem~5.7]{BFG} can be generalized to families over dg-rings, 
so giving the claimed equivalence (\ref{Ups}). 

\subsection{The extended ADHM stack}
For each $m\in \mathbb{N}$, let $I_m$ be the set
\begin{align*}
	I_m:= \{(i, j) \in \mathbb{Z}_{\geq 0}^{\oplus 2} : 1\leq i+ j \leq m\}. 
	\end{align*}
Note that we have $\lvert I_m \rvert =m^2/2+3m/2=\dim \lvert \mathcal{O}_{\mathbb{P}^2}(m) \rvert$. 
For each $\alpha=(\alpha_{ij}) \in \mathbb{C}^{I_m}$, consider the polynomial
\begin{align}\label{def:falpha}
	f_{\alpha}=1+\sum_{(i, j) \in I_m}\alpha_{ij}x^i y^j \in \mathbb{C}[x, y]
	\end{align}
and let $C_{\alpha} \subset \mathbb{C}^2$ be the plane curve defined by $f_{\alpha}=0$. 
There is an open immersion 
\begin{align}\label{open:I}
	\mathbb{C}^{I_m} \hookrightarrow \lvert \mathcal{O}_{\mathbb{P}^2}(m) \rvert, \ 
	\alpha \mapsto \overline{f}_{\alpha}:=Z^m f_{\alpha}(X/Z, Y/Z). 
\end{align}
whose image consists of polynomials
$\sum c_{ijk}X^i Y^j Z^k$ such that $c_{00m} \neq 0$. 

Let $V$ be a $d$-dimensional vector space
and $\mathfrak{g}=\Hom(V, V)$. 
We consider the map 
\begin{align*}
	\mu \colon 
	V \oplus V^{\vee} \oplus \mathfrak{g}^{\oplus 2} \oplus 
	\mathbb{C}^{I_m} &\to \mathfrak{g} \oplus V^{\vee}\\
	(u, v, A, B, \alpha) &\mapsto 
	([A, B]+u \circ v, v \circ f_{\alpha}(A, B)).
	\end{align*}
The above map $\mu$ is $GL(V)$-equivariant, where $GL(V)$ acts on $\mathfrak{g}$ by 
conjugation and on $\mathbb{C}^{I_m}$ trivially. 
Let $\mu^{-1}(0)$ be the derived zero locus of $\mu$. 
We define the derived stack 
\begin{align*}
	\mathfrak{M}_{\mathbb{C}^2}(m, d) :=\mu^{-1}(0)/GL(V). 
	\end{align*}
By forgetting $\alpha$, there is a morphism of 
derived stacks 
\begin{align}\label{g}
	g \colon \mathfrak{M}_{\mathbb{C}^2}(m, d) \to \mathfrak{N}_{\mathbb{C}^2}(d). 
	\end{align}
In what follows, we explain that $\mathfrak{M}_{\mathbb{C}^2}(m, d)$
can be interpreted as a derived moduli stack of pairs of a one-dimensional sheaf on $\mathbb{C}^2$
with a section. 

\subsection{The derived moduli stack of pairs on \texorpdfstring{$\mathbb{C}^2$}{C2}}
We 
consider pairs 
\begin{align}\label{pairs}
	(F, s), \ F \in \Coh(\mathbb{P}^2), s \colon \mathcal{O}_{\mathbb{P}^2} \to F,
	\end{align}
where $F$ is one-dimensional 
and the cokernel of $s$ is at most zero-dimensional. Let  $l\subset \mathbb{P}^2$ be a line.
We denote by 
\begin{align*}
	\mathfrak{T}_{\mathbb{P}^2}(m, d) \hookleftarrow \mathfrak{T}^{\mathrm{cl}}_{\mathbb{P}^2}(m, d)
	\end{align*}
the derived moduli stack 
of pairs (\ref{pairs}) and its classical truncation
satisfying 
\begin{align*}
	[F]=m[l], \chi(F)=3m/2-m^2/2+d.\end{align*}
The stack $\mathfrak{T}^{\mathrm{cl}}_d(\mathbb{P}^2, m)$
is an Artin stack of finite type
which admits  
a good moduli space, see~\cite[Subsection~4.2.4]{T}. 

Note that a pair $(\mathcal{O}_{\mathbb{P}^2} \to F)$
in $\mathfrak{T}_{\mathbb{P}^2}(m, d)$
admits a filtration
\begin{align*}
	(0 \to Q) \subset (\mathcal{O}_{\mathbb{P}^2} \twoheadrightarrow F')
	\subset (\mathcal{O}_{\mathbb{P}^2} \to F)
	\end{align*}
such that $Q$ and $Q':=F/F'$ are zero-dimensional sheaves with 
$\lvert Q \rvert+\lvert Q' \rvert=d$
and $F'/Q=\mathcal{O}_{C}$ for $C \in \lvert \mathcal{O}_{\mathbb{P}^2}(m) \rvert$. 
The good moduli space morphism is 
\begin{align*}
\mathfrak{T}^{\mathrm{cl}}_{\mathbb{P}^2}(m, d)
	&\to T_{\mathbb{P}^2}(m, d)=\lvert \mathcal{O}_{\mathbb{P}^2}(m) \rvert 
	\times \mathrm{Sym}^d(\mathbb{P}^2), \\
	(F, s)& \mapsto  (C, \mathrm{Supp}(Q)+\mathrm{Supp}(Q')). 
	\end{align*}
We define the derived open substack 
$\mathfrak{T}_{\mathbb{C}^2}(m, d) \subset \mathfrak{T}_{\mathbb{P}^2}(m, d)$
whose classical truncation fits into the Cartesian square 
\begin{align*}
	\xymatrix{
\mathfrak{T}_{\mathbb{C}^2}^{\mathrm{cl}}(m, d) \ar@<-0.3ex>@{^{(}->}[r]\ar[d] & \mathfrak{T}^{\mathrm{cl}}_{\mathbb{P}^2}(m, d) \ar[d] \\
\mathbb{C}^{I_m} \times \mathrm{Sym}^d(\mathbb{C}^2)  \ar@<-0.3ex>@{^{(}->}[r] & 
\lvert \mathcal{O}_{\mathbb{P}^2}(m) \rvert 
\times \mathrm{Sym}^d(\mathbb{P}^2). 
}
	\end{align*}
\begin{lemma}
	There is a natural morphism 
	\begin{align}\label{mor:h}
		h \colon \mathfrak{T}_{\mathbb{C}^2}(m, d) \to 
		\mathfrak{U}_{\mathbb{C}^2}(d)
				\end{align}
			sending a pair $(F, s)$ to a two term complex
			$(\mathcal{O}_{\mathbb{P}^2}(m) \stackrel{s}{\to}F(m))$. 
	\end{lemma}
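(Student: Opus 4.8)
The plan is to construct $h$ as a morphism of derived moduli stacks by writing down the induced transformation of functors of points and then verifying that it is well defined. For a family of pairs $(\mathcal{F}, s\colon \mathcal{O}_{\mathbb{P}^2\times S}\to\mathcal{F})$ parametrised by a dg-scheme $S$ in $\mathfrak{T}_{\mathbb{C}^2}(m,d)$, I would associate the two-term complex $\mathcal{I}:=\big[\mathcal{O}_{\mathbb{P}^2\times S}(m)\xrightarrow{s(m)}\mathcal{F}(m)\big]$, placed in cohomological degrees $0$ and $1$. Since this is obtained from the universal pair by a fixed, functorial cone construction out of the pullback of a line bundle, the assignment $S\mapsto\mathcal{I}$ is tautologically functorial in $S$ and compatible with the natural derived structures on both stacks; the content of the lemma is therefore the pointwise statement that $\mathcal{I}$ restricted to a fibre $\mathbb{P}^2\times\{s\}$ is a framed object of $\mathcal{A}$ with $\ch_2=-d$, together with flatness over $S$ of the relevant cohomology sheaves.

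For the pointwise check, fix a closed point $(F,s)$ and set $I=[\mathcal{O}_{\mathbb{P}^2}(m)\xrightarrow{s(m)}F(m)]$. Its only nonzero cohomology sheaves are $\mathcal{H}^0(I)=\mathrm{Ker}(s(m))$ and $\mathcal{H}^1(I)=\mathrm{Cok}(s)(m)$: the first is a subsheaf of the line bundle $\mathcal{O}_{\mathbb{P}^2}(m)$, hence torsion-free, so it has no zero-dimensional subsheaves and lies in $\mathcal{F}$; the second is at most zero-dimensional by the stable-pair condition, so it lies in $\mathcal{T}$. Hence $I\in\mathcal{A}=\langle\mathcal{F},\mathcal{T}[-1]\rangle$. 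A Riemann--Roch computation on $\mathbb{P}^2$ using $[F]=m[l]$ and $\chi(F)=3m/2-m^2/2+d$ gives $\ch(I)=1-d[\mathrm{pt}]$, so $\ch_1(I)=0$ and $\ch_2(I)=-d$, as required.

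The substantive point is the framing $I|_{l_\infty}\cong\mathcal{O}_{l_\infty}$. Here I would use the defining open condition of $\mathfrak{T}_{\mathbb{C}^2}(m,d)$: the cokernel of $s$ and the maximal zero-dimensional subsheaf of $F$ are supported on $\mathbb{C}^2$, so in a neighbourhood of $l_\infty$ the sheaf $F$ is pure one-dimensional and $s$ is surjective; thus near $l_\infty$ the subsheaf $\mathrm{Ker}(s)\subset\mathcal{O}_{\mathbb{P}^2}$ is the ideal sheaf of a pure one-dimensional, hence Cohen--Macaulay, hence divisorial curve of degree $m$ on the smooth surface $\mathbb{P}^2$, in particular a line bundle there with $\mathrm{Ker}(s)|_{l_\infty}\cong\mathcal{O}_{\mathbb{P}^1}(-m)$. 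Twisting, $\mathcal{H}^0(I)|_{l_\infty}=\mathrm{Ker}(s)(m)|_{l_\infty}$ is a line bundle of degree $-m+m=0$ on $l_\infty\cong\mathbb{P}^1$, hence $\cong\mathcal{O}_{l_\infty}$; and since $\mathcal{H}^1(I)$ is supported away from $l_\infty$ while $\mathcal{H}^0(I)$ is locally free near $l_\infty$, there is no derived correction and the derived restriction of $I$ to $l_\infty$ is $\mathcal{O}_{l_\infty}$. (Alternatively, this framing, and indeed the whole assignment at the level of classical stacks, can be extracted from the description of $\mathfrak{T}_{\mathbb{P}^2}(m,d)$ and its relation to perverse coherent sheaves in~\cite[Subsection~4.2.4]{T}.)

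Finally, the family versions of these statements --- flatness of $\mathcal{H}^0(\mathcal{I})$ and $\mathcal{H}^1(\mathcal{I})$ over $S$ and cohomology-and-base-change for the restriction to $l_\infty\times S$ --- upgrade the pointwise check to a morphism of the underlying classical stacks; and since $\mathcal{I}$ is produced from the universal pair by a fixed functorial cone construction, the map lifts to the derived stacks by the general functoriality of derived moduli of objects, using the construction of $\mathfrak{T}_{\mathbb{C}^2}(m,d)$ and $\mathfrak{U}_{\mathbb{C}^2}(d)$ in~\cite{T}. I expect the main obstacle to be the framing verification of the previous paragraph, carried out in families: making precise that $\mathrm{Ker}(s)$ is genuinely divisorial near $l_\infty$, so that the degree-zero line bundle it produces on $l_\infty$ is actually trivial, and that no $\mathrm{Tor}$-sheaves appear along $l_\infty\times S$; once this is in place, the derived enhancement is essentially bookkeeping and reduces to citing~\cite{T}.
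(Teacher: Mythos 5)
Most of your argument runs parallel to the paper's: you form the two-term complex, check $\mathcal{H}^0\in\mathcal{F}$ and $\mathcal{H}^1\in\mathcal{T}$ so that $I\in\mathcal{A}$, compute $\ch(I)=1-d[\mathrm{pt}]$, and observe that near $l_\infty$ the complex is quasi-isomorphic to an invertible sheaf whose restriction to $l_\infty$ has degree $0$, so no derived correction appears. Those steps are correct. The gap is in how you treat the framing. The stack $\mathfrak{U}_{\mathbb{C}^2}(d)$ parametrizes objects of $\mathcal{A}$ \emph{together with} a chosen isomorphism $I|_{l_\infty}\cong\mathcal{O}_{l_\infty}$; the framing is data, not a property. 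Your argument only shows that such an isomorphism exists fibrewise (a degree-zero line bundle on $\mathbb{P}^1$ is trivial), and fibrewise existence does not produce the morphism $h$: in a family over $S$ what you get from cohomology and base change is a line bundle on $S$ together with an identification of $\mathcal{I}|_{l_\infty\times S}$ with its pullback, and this line bundle need not be trivial, nor trivializable canonically; making non-canonical local choices, the resulting maps to $\mathfrak{U}_{\mathbb{C}^2}(d)$ differ by units and do not glue. So as written you construct a map to the moduli of unframed objects, plus a pointwise frameability statement, which is strictly weaker than the lemma.

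The missing ingredient is exactly the one the paper uses and that your proof never invokes: the defining condition of $\mathfrak{T}_{\mathbb{C}^2}(m,d)$ also constrains the degree-$m$ support curve $C$ to lie in $\mathbb{C}^{I_m}$, i.e.\ $C$ has a \emph{normalized} equation $\overline{f}_\alpha$ with $Z^m$-coefficient equal to $1$ (see \eqref{open:I}). On a neighborhood $U\supset l_\infty$ the pair restricts to $(\mathcal{O}_U(m)\to\mathcal{O}_C(m)|_U)\simeq I_C(m)|_U$, and multiplication by $\overline{f}_\alpha$ gives a canonical isomorphism $\mathcal{O}_U\xrightarrow{\sim} I_C(m)|_U\cong I|_U$; this is the ``natural'' trivialization, it is functorial in families (the normalized equation varies algebraically with the family because the moduli of curves appearing is literally $\mathbb{C}^{I_m}$), and it is what turns your fibrewise check into an actual morphism $h\colon\mathfrak{T}_{\mathbb{C}^2}(m,d)\to\mathfrak{U}_{\mathbb{C}^2}(d)$. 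Without appealing to this normalization (or some equivalent canonical choice), the ``$\cong\mathcal{O}_{l_\infty}$ because degree zero'' step cannot be upgraded to the required natural framing, and the concluding ``bookkeeping'' paragraph of your proposal does not close this.
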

\begin{proof}
	By the definition of $\mathfrak{T}_{\mathbb{C}^2}(m, d)$, 
	for a point $(F, s)$ in $\mathfrak{T}_{\mathbb{C}^2}(m, d)$,
	the 
	two term complex
	\[I=(\mathcal{O}_{\mathbb{P}^2}(m) \stackrel{s}{\to}F(m))\]
	is an object in $\mathcal{A}$ such that 
	$\mathcal{H}^0(I)=I_Z$ for a zero-dimensional subscheme $Z \subset \mathbb{C}^2$ and $\mathcal{H}^1(I)$ is zero-dimensional with 
	support contained in $\mathbb{C}^2$. 
	In particular, there is an open neighborhood $l_{\infty} \subset U \subset
	\mathbb{P}^2$ such that 
	$I|_{U}=(\mathcal{O}_U(m) \stackrel{s|_{U}}{\to} \mathcal{O}_C(m)|_U)$
	for $C \in \lvert \mathcal{O}_{\mathbb{P}^2}(m) \rvert$. Then 
	$I|_{U}$ is naturally isomorphic to $\mathcal{O}_U$, giving a 
	trivialization $I|_{l_{\infty}} \cong \mathcal{O}_{l_{\infty}}$. 
	\end{proof}

\begin{lemma}\label{lem:HomI}
	For an object $I$ in $\mathfrak{U}_{\mathbb{C}^2}(d)$, 	we have 
	\begin{align*}
		\mathrm{Hom}_{\mathbb{P}^2}(I, \mathcal{O}_{\mathbb{P}^2}(m))
		=\Ker\left(H^0(\mathcal{O}_{\mathbb{P}^2}(m)) \stackrel{\eta}{\to}
		\mathrm{Ext}^2_{\mathbb{P}^2}(\mathcal{H}^1(I), \mathcal{O}_{\mathbb{P}^2}(m))   \right).
		\end{align*}
	In the above, $\eta$ sends $a \in H^0(\mathcal{O}_{\mathbb{P}^2}(m))$ 
	to $\mathcal{H}^1(I) \to I_Z[2] \to \mathcal{O}_{\mathbb{P}^2}(m)[2]$
	where the first arrow is the extension class of (\ref{exact:I}) and 
	the second arrow is given by multiplication with $a$. 
	\end{lemma}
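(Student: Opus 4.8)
The plan is to apply the contravariant cohomological functor $\mathrm{Hom}_{\mathbb{P}^2}(-, \mathcal{O}_{\mathbb{P}^2}(m))$ to the distinguished triangle
\[
I_Z \to I \to \mathcal{H}^1(I)[-1] \to I_Z[1]
\]
associated with the short exact sequence (\ref{exact:I}) in $\mathcal{A}$, and to read off $\mathrm{Hom}_{\mathbb{P}^2}(I, \mathcal{O}_{\mathbb{P}^2}(m))$ from the resulting long exact sequence. Throughout write $Q:=\mathcal{H}^1(I)$, a zero-dimensional sheaf on $\mathbb{C}^2$, and $G:=\mathcal{O}_{\mathbb{P}^2}(m)$. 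Recall also the boundary map $\mathcal{H}^1(I)[-1]\to I_Z[1]$ of the triangle is, by its shift, the extension class $Q\to I_Z[2]$ of (\ref{exact:I}).

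First I would collect the relevant vanishings. Since $Q$ is zero-dimensional on the smooth surface $\mathbb{P}^2$, we have $\mathcal{E}xt^i_{\mathbb{P}^2}(Q, \mathcal{O}_{\mathbb{P}^2})=0$ for $i\neq 2$, and the local-to-global spectral sequence (whose higher rows vanish since $\mathcal{E}xt^2$ is supported in dimension zero) gives $\mathrm{Ext}^i_{\mathbb{P}^2}(Q, G)=0$ for $i\neq 2$. Feeding this into the long exact sequence obtained from the triangle above, the terms $\mathrm{Hom}(\mathcal{H}^1(I)[-1],G)=\mathrm{Ext}^1_{\mathbb{P}^2}(Q,G)$ and $\mathrm{Ext}^1(\mathcal{H}^1(I)[-1],G)$'s predecessor $\mathrm{Ext}^1_{\mathbb{P}^2}(Q,G)$ vanish, so the sequence reduces to
\[
0 \to \mathrm{Hom}_{\mathbb{P}^2}(I, G) \to \mathrm{Hom}_{\mathbb{P}^2}(I_Z, G) \xrightarrow{\ \delta\ } \mathrm{Ext}^2_{\mathbb{P}^2}(Q, G),
\]
where $\delta$ is precomposition with the extension class $Q\to I_Z[2]$ of (\ref{exact:I}). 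Next I would identify $\mathrm{Hom}_{\mathbb{P}^2}(I_Z,G)$ with $H^0(\mathcal{O}_{\mathbb{P}^2}(m))$: applying $\mathrm{Hom}_{\mathbb{P}^2}(-,G)$ to $0\to I_Z\to \mathcal{O}_{\mathbb{P}^2}\to \mathcal{O}_Z\to 0$ and using $\mathrm{Ext}^i_{\mathbb{P}^2}(\mathcal{O}_Z,G)=0$ for $i=0,1$ (again because $\mathcal{O}_Z$ is zero-dimensional), the restriction map $H^0(G)=\mathrm{Hom}(\mathcal{O}_{\mathbb{P}^2},G)\xrightarrow{\sim}\mathrm{Hom}(I_Z,G)$ is an isomorphism, sending $a$ to the composite $I_Z\hookrightarrow \mathcal{O}_{\mathbb{P}^2}\xrightarrow{\cdot a}\mathcal{O}_{\mathbb{P}^2}(m)$.

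The remaining step, and the only one requiring some care, is to check that under this last isomorphism the connecting map $\delta$ becomes exactly $\eta$. Unwinding the rotation of the triangle shows $\delta(a)$ is the composite $Q\to I_Z[2]\xrightarrow{a[2]}\mathcal{O}_{\mathbb{P}^2}(m)[2]$, where the first arrow is the extension class of (\ref{exact:I}) and the second is induced by the section $a$; since multiplication by a global section is functorial, the map $I_Z\hookrightarrow\mathcal{O}_{\mathbb{P}^2}\xrightarrow{\cdot a}\mathcal{O}_{\mathbb{P}^2}(m)$ agrees with multiplication by $a$ on $I_Z$, so this is precisely the description of $\eta(a)$ in the statement. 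Combining with the exact sequence above yields $\mathrm{Hom}_{\mathbb{P}^2}(I, \mathcal{O}_{\mathbb{P}^2}(m))=\Ker(\eta)$, as claimed. The main obstacle is this bookkeeping of shifts and of the identification of the connecting homomorphism; all the vanishing statements are routine properties of zero-dimensional sheaves on a smooth surface.
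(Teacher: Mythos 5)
Your proof is correct and is exactly the argument the paper has in mind: the paper's proof simply says the lemma follows by applying $\mathrm{RHom}_{\mathbb{P}^2}(-,\mathcal{O}_{\mathbb{P}^2}(m))$ to the triangle coming from (\ref{exact:I}) and observing $\mathrm{Hom}(I_Z,\mathcal{O}_{\mathbb{P}^2}(m))=H^0(\mathcal{O}_{\mathbb{P}^2}(m))$, and you have filled in the same steps (the vanishing of $\mathrm{Ext}^i$ of a zero-dimensional sheaf for $i\neq 2$, the identification of $\mathrm{Hom}(I_Z,\mathcal{O}_{\mathbb{P}^2}(m))$, and the matching of the connecting map with $\eta$) in full detail.
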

\begin{proof}
	The lemma is straightforward by applying $\mathrm{RHom}_{\mathbb{P}^2}(-, \mathcal{O}_{\mathbb{P}^2}(m))$ to (\ref{exact:I})
	and observing that 
	$\mathrm{Hom}(I_Z, \mathcal{O}_{\mathbb{P}^2}(m))=H^0(\mathcal{O}_{\mathbb{P}^2}(m))$. 
	\end{proof}
By Lemma~\ref{lem:HomI},
we have an injection 
\begin{align}\label{inj:I}
\Hom(I, \mathcal{O}_{\mathbb{P}^2}(m)) \hookrightarrow 
H^0(\mathcal{O}_{\mathbb{P}^2}(m))
	\end{align}
which sends $t \colon I \to \mathcal{O}_{\mathbb{P}^2}(m)$ to $\det(t)
 \colon \det I=\mathcal{O}_{\mathbb{P}^2} \to \mathcal{O}_{\mathbb{P}^2}(m)$. 
We set
\begin{align*}
\mathbb{P}(\Hom(I, \mathcal{O}_{\mathbb{P}^2}(m)))^{\circ}:=
\mathbb{P}(\Hom(I, \mathcal{O}_{\mathbb{P}^2}(m))) \cap \mathbb{C}^{I_m}. 
\end{align*}
\begin{lemma}\label{lem:class}
	For an object $I$ in 
	$\mathfrak{U}_{\mathbb{C}^2}(d)$, 
	we have 
	$h^{-1}(I)^{\mathrm{cl}} \cong \mathbb{P}(\Hom(I, \mathcal{O}_{\mathbb{P}^2}(m)))^{\circ}$. 
	\end{lemma}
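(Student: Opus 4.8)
The plan is to identify the fibre $h^{-1}(I)^{\mathrm{cl}}$ with the space of nonzero morphisms $t\colon I\to\mathcal{O}_{\mathbb{P}^2}(m)$ up to scalar, and then to recognize that the open condition carving out $\mathbb{P}(\Hom(I,\mathcal{O}_{\mathbb{P}^2}(m)))^{\circ}$ inside $\mathbb{P}(\Hom(I,\mathcal{O}_{\mathbb{P}^2}(m)))$ is exactly the condition that the resulting pair lies in $\mathfrak{T}_{\mathbb{C}^2}(m,d)$ rather than merely in $\mathfrak{T}_{\mathbb{P}^2}(m,d)$. The mechanism is that a two-term complex $J:=(\mathcal{O}_{\mathbb{P}^2}(m)\xrightarrow{s(m)}F(m))$ representing an object of $\mathcal{A}$ lies in cohomological degrees $0$ and $1$, hence carries its canonical ``stupid truncation'' triangle $F(m)[-1]\to J\xrightarrow{t_J}\mathcal{O}_{\mathbb{P}^2}(m)\xrightarrow{s(m)}F(m)$; thus $\mathrm{cone}(t_J)\simeq F(m)$ and the induced map $\mathcal{O}_{\mathbb{P}^2}(m)\to\mathrm{cone}(t_J)$ is $s(m)$. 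So a pair $(F,s)$ together with an isomorphism $\theta\colon J\xrightarrow{\sim}I$ produces $t:=t_J\circ\theta^{-1}\colon I\to\mathcal{O}_{\mathbb{P}^2}(m)$, and this construction is reversible.

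For the reverse direction, start from a nonzero $t\colon I\to\mathcal{O}_{\mathbb{P}^2}(m)$ and set $G:=\mathrm{cone}(t)$. By (\ref{exact:I}) we have $\mathcal{H}^0(I)=I_Z$ and $\mathcal{H}^1(I)=Q$, and the restriction of $t$ to $I_Z\subset I$ equals $\mathcal{H}^0(t)$; if $\mathcal{H}^0(t)=0$ then $t$ factors through $Q[-1]$, but $\Hom(Q[-1],\mathcal{O}_{\mathbb{P}^2}(m))=\Ext^1(Q,\mathcal{O}_{\mathbb{P}^2}(m))=0$ since $Q$ is zero-dimensional (codimension two on $\mathbb{P}^2$), so $t=0$. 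Hence $t\neq0$ forces $\mathcal{H}^0(t)\colon I_Z\to\mathcal{O}_{\mathbb{P}^2}(m)$ to be nonzero, thus injective ($I_Z$ is torsion-free of rank one), and the long exact cohomology sequence shows $G$ is a coherent sheaf concentrated in degree $0$. Put $F:=G(-m)$ and let $s\colon\mathcal{O}_{\mathbb{P}^2}\to F$ be the twist of the map $\mathcal{O}_{\mathbb{P}^2}(m)\to G$ in the cone triangle. Rotating the triangle gives $\mathrm{cone}(\mathcal{O}_{\mathbb{P}^2}(m)\to G)=I[1]$, so $\mathrm{coker}(s)$, being the twist of $\mathcal{H}^1(I)=Q$, is zero-dimensional, $\ker(s)\simeq I_Z(-m)$, and the two-term complex $(\mathcal{O}_{\mathbb{P}^2}(m)\xrightarrow{s(m)}F(m))$ recovers $I$. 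A Chern character computation, using $\ch(I)=1-d\,[\mathrm{pt}]$, gives $[F]=m[l]$ and $\chi(F)=3m/2-m^2/2+d$, so $(F,s)\in\mathfrak{T}_{\mathbb{P}^2}(m,d)$. The one-dimensional support of $F$ is the curve cut out by $\det(t)\in H^0(\mathcal{O}_{\mathbb{P}^2}(m))$, the image of $t$ under the injection (\ref{inj:I}); since the zero-dimensional data of $I$ (namely $Z$ and $Q$) is supported on $\mathbb{C}^2$ by the definition of $\mathfrak{U}_{\mathbb{C}^2}(d)$, we conclude $(F,s)\in\mathfrak{T}_{\mathbb{C}^2}(m,d)$ precisely when $\det(t)\in\mathbb{C}^{I_m}\subset\lvert\mathcal{O}_{\mathbb{P}^2}(m)\rvert$, i.e. $[t]\in\mathbb{P}(\Hom(I,\mathcal{O}_{\mathbb{P}^2}(m)))^{\circ}$. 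This yields a bijection on $\mathbb{C}$-points between $h^{-1}(I)^{\mathrm{cl}}$ and $\mathbb{P}(\Hom(I,\mathcal{O}_{\mathbb{P}^2}(m)))^{\circ}$, the scalar ambiguity in $t$ matching the automorphism $\lambda\cdot\mathrm{id}_F$ of $(F,\lambda s)$.

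To promote this to an isomorphism of (classical) spaces, I would run the same constructions in families over an arbitrary test scheme $T$: a $T$-point of $\mathbb{P}(\Hom(I,\mathcal{O}_{\mathbb{P}^2}(m)))^{\circ}$ is a line bundle $L$ on $T$ with a morphism $I_T\to\mathcal{O}_{\mathbb{P}^2}(m)\boxtimes L$ whose restriction to each fibre is nonzero, and the cone of this morphism, suitably twisted, is the desired $T$-family of pairs; conversely the stupid-truncation projection works verbatim in families. Moreover fixing the trivialization $\theta$ rigidifies the moduli problem, since an automorphism of $(F,s)$ inducing the identity on $J$ produces an element of $\Hom(Q(m),\mathcal{O}_{\mathbb{P}^2}(m))=0$ and is therefore trivial, so the fibre is an algebraic space and the bijection above upgrades to an isomorphism.

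The main obstacle I anticipate is the families step: one must check that $\mathrm{cone}(I_T\to\mathcal{O}_{\mathbb{P}^2}(m)\boxtimes L)$ is genuinely a coherent sheaf, flat over $T$, rather than a complex — equivalently, that fibrewise injectivity of $\mathcal{H}^0$ of the map is an open-and-closed condition compatible with base change, which one gets from cohomology and base change together with the constancy of the numerical invariants of the cone — and that the resulting family of pairs genuinely lands in the open substack $\mathfrak{T}_{\mathbb{C}^2}(m,d)\subset\mathfrak{T}_{\mathbb{P}^2}(m,d)$, which again reduces to the $\det(t)$-condition cutting out $\mathbb{C}^{I_m}$. The pointwise part of the argument is, by comparison, routine: it rests on the two vanishings $\Hom(Q[-1],\mathcal{O}_{\mathbb{P}^2}(m))=0$ and the torsion-freeness of $I_Z$, together with bookkeeping of the stupid-truncation triangle.
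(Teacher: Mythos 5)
Your proposal is correct and follows essentially the same route as the paper: both directions are realized by passing between a nonzero morphism $t\colon I\to\mathcal{O}_{\mathbb{P}^2}(m)$ and the pair obtained from its cone (with the projection of the two-term complex onto $\mathcal{O}_{\mathbb{P}^2}(m)$ as the inverse), the scalar ambiguity being controlled by the determinant injection \eqref{inj:I}, and the $\circ$-condition matching the requirement that the support curve lie in $\mathbb{C}^{I_m}$. Your additional discussion of the family/rigidification step is a refinement beyond what the paper records (its proof is a pointwise bijection), not a different method.
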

 \begin{proof}
Given $I$, let us take a non-zero morphism $t \colon I \to \mathcal{O}_{\mathbb{P}^2}(m)$
corresponding to a point in 
$\mathbb{P}(\Hom(I, \mathcal{O}_{\mathbb{P}^2}(m)))^{\circ}$. 
Let $C(t)$ be the cone of $t$ and consider the distinguished triangle
\begin{align*}
	I \to \mathcal{O}_{\mathbb{P}^2}(m) \to C(t)\to I[1]. 
	\end{align*}
By taking the associated long exact sequence of cohomologies, 
we see that $C(t) \in \mathrm{Coh}(\mathbb{P}^2)$
and it fits into the exact 
sequence
\begin{align*}
	0 \to \mathcal{O}_{\mathbb{P}^2}(m)/\mathcal{H}^0(I) \to 
	C(t) \to \mathcal{H}^1(I) \to 0. 
	\end{align*}
It follows that $C(t)$ is one-dimensional with support a curve 
represented by an element in $\mathbb{C}^{I_m}$. Moreover, $I$ is isomorphic 
to the two-term complex
\[I\cong (\mathcal{O}_{\mathbb{P}^2}(m) \xrightarrow{s} C(t))\] 
such that $\mathrm{Cok}(s)=\mathcal{H}^1(I)$ is zero-dimensional. 
Suppose that for another morphism $t':I\to \mathcal{O}_{\mathbb{P}^2}(m)$, there is a commutative diagram 
\begin{align*}
	\xymatrix{
I \ar[r]^-{t} & \mathcal{O}_{\mathbb{P}^2}(m) \ar[r] \ar@{=}[d] & C(t) \ar[d]^-{\cong} \\	
I \ar[r]^-{t'} & \mathcal{O}_{\mathbb{P}^2}(m) \ar[r] & C(t'). 
}
	\end{align*}
Then there is an isomorphism $\phi \colon I \stackrel{\cong}{\to} I$
which makes the above diagram commutative. 
Then $ \det t \det \phi =\det t'$.
As $0\neq \det \phi \in H^0(\mathcal{O}_{\mathbb{P}^2})=\mathbb{C}$, 
and (\ref{inj:I}) is injective, 
we see that $t$ and $t'$ differ by a non-zero scalar multiplication. 
It follows that 
\begin{align}\label{inclu:h}
\mathbb{P}(\Hom(I, \mathcal{O}_{\mathbb{P}^2}(m)))^{\circ} \subset h^{-1}(I)^{\mathrm{cl}}. 
	\end{align}

Conversely, let $(\mathcal{O}_{\mathbb{P}^2}\xrightarrow{s} F)$ be a point 
in $\mathfrak{T}_{\mathbb{C}^2}(m, d)$ such that the two-term 
complex $(\mathcal{O}_{\mathbb{P}^2}(m) \stackrel{s}{\to} F(m))$ is isomorphic to 
$I$ in $D^b(\mathbb{P}^2)$. 
Then there is a non-zero morphism $I \to \mathcal{O}_{\mathbb{P}^2}(m)$ 
corresponding to a point in $\mathbb{P}(\Hom(I, \mathcal{O}_{\mathbb{P}^2}(m)))^{\circ}$. 
Therefore (\ref{inclu:h}) is an isomorphism. 
 	\end{proof}
 \subsection{Explicit description of the moduli stack of pairs on \texorpdfstring{$\mathbb{C}^2$}{C2}}
 For $(i, j) \in I_m$ and $(A, B) \in \mathfrak{g}^{\oplus 2}$, we have 
 \begin{align*}
 	A^i B^j-x^i y^j \id=& 
 	A^i(B^{j-1}+yB^{j-2}+\cdots+y^{j-1}\id)(B-y\id)\\
 	&+y^j(A^{i-1}+xA^{i-2}+\cdots+x^{i-1}\id)(A-x\id). 
 	\end{align*}
 Let $\alpha \in \mathbb{C}^{I_m}$.
 By the above equality, we 
 can write 
 \begin{align*}
 	f_{\alpha}(A, B)-f_{\alpha} \id=
 	g_{\alpha}(A-x \id)+h_{\alpha}(B-y\id)
 	\end{align*}
 for $g_{\alpha}, h_{\alpha} \in \mathbb{C}[x, y] \otimes \mathfrak{g}$ 
 with degree less than $m$. 
 Then 
 for each element 
 \begin{align*}
     (u, v, A, B, \alpha) \in V \oplus V^{\vee} \oplus \mathfrak{g}^{\oplus 2} \oplus \mathbb{C}^{I_m}
     \end{align*}
 such that $[A, B]+u \circ v=0$
 and $v \circ f_{\alpha}(A, B)=0$, 
 we have the commutative diagram 
 \begin{align}\label{dia:V}
 	\xymatrix{
 V \otimes \mathcal{O}_{\mathbb{P}^2}(-1) \ar[r]^-{\phi} \ar[d] & 
 (V^{\oplus 2} \oplus \mathbb{C}) \otimes \mathcal{O}_{\mathbb{P}^2} \ar[r]^-{\psi} \ar[d]_-{\gamma} & 
 V \otimes \mathcal{O}_{\mathbb{P}^2}(1) \ar[d] \\
 0 \ar[r] & \mathcal{O}_{\mathbb{P}^2}(m) \ar[r] & 0. 	
 }
 	\end{align}
 In the above diagram, $\phi, \psi$ are given as (\ref{alphabeta}), and $\gamma$
 is given by 
 \begin{align*}
 \gamma=(v \circ \overline{g}_{\alpha}, v \circ \overline{h}_{\alpha}, \overline{f}_{\alpha}). 	
 	\end{align*}
 Here $\overline{g}_{\alpha}=Z^m g_{\alpha}(X/Z, Y/Z) \in \mathbb{C}[X, Y, Z] \otimes 
 \mathfrak{g}$, etc. 
 We note that the equation $v \circ f_{\alpha}(A, B)=0$
 is used for the commutativity of the left 
 square in (\ref{dia:V}). 
 
 The diagram (\ref{dia:V})
 determines a morphism 
 $I \to \mathcal{O}_{\mathbb{P}^2}(m)$, where $I$ is a perverse
 coherent sheaf represented by (\ref{perv:V}). 
 From the proof of Lemma~\ref{lem:class}, 
 the totalization of the diagram (\ref{dia:V})
 is quasi-isomorphic to a one-dimensional sheaf $F$ on $\mathbb{P}^2$ with 
 curve class $(\overline{f}_{\alpha})=0$ and
 with a morphism 
 $\mathcal{O}_{\mathbb{P}^2}(m) \to F$. 
 Therefore, by applying $\otimes \mathcal{O}_{\mathbb{P}^2}(-m)$, we obtain 
 the morphism 
  \begin{align*}
 	\Theta \colon \mathfrak{M}_{\mathbb{C}^2}(m, d) \to \mathfrak{T}_{\mathbb{C}^2}(m, d)
 	\end{align*}
 such that the following diagram commutes: 
 \begin{align}\label{com:Ups}
 	\xymatrix{ \mathfrak{M}_{\mathbb{C}^2}(m, d) \ar[r]^-{\Theta} \ar[d]_-{g} &  \mathfrak{T}_{\mathbb{C}^2}(m, d)
 		\ar[d]^-{h} \\
 		\mathfrak{N}_{\mathbb{C}^2}(d) \ar[r]_-{\Upsilon}^-{\sim} & \mathfrak{U}_{\mathbb{C}^2}(d).  	
 }
 	\end{align}
\begin{prop}
	The morphism $\Theta$ is an equivalence of derived stacks. 
	\end{prop}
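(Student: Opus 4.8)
The plan is to exploit the commutative square (\ref{com:Ups}) together with the fact, established above, that $\Upsilon$ is an equivalence of derived stacks. Combining $\Theta$ and $g$ gives a canonical morphism $(\Theta, g)\colon\mathfrak{M}_{\mathbb{C}^2}(m, d)\to\mathfrak{T}_{\mathbb{C}^2}(m, d)\times_{\mathfrak{U}_{\mathbb{C}^2}(d)}\mathfrak{N}_{\mathbb{C}^2}(d)$, and since $\Upsilon$ is an equivalence the projection onto the first factor is an equivalence carrying $(\Theta, g)$ to $\Theta$. As $(\Theta, g)$ is a morphism over $\mathfrak{N}_{\mathbb{C}^2}(d)$, it suffices to show that for every derived affine scheme $T$ and every $T$-point $\xi$ of $\mathfrak{N}_{\mathbb{C}^2}(d)$, with image the perverse sheaf $I:=\Upsilon(\xi)$ over $\mathbb{P}^2\times T$, the induced map on fibers $g^{-1}(\xi)\to h^{-1}(I)$ is an equivalence of derived schemes over $T$.

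First I would describe the fiber of $g$. A $T$-point $\xi$ of $\mathfrak{N}_{\mathbb{C}^2}(d)$ amounts to a $GL(V)$-torsor together with a tuple $(u, v, A, B)$ over $T$ satisfying $[A, B]+u\circ v=0$ in the derived sense. Since $\mathfrak{M}_{\mathbb{C}^2}(m, d)=\mu^{-1}(0)/GL(V)$ and $g$ forgets $\alpha$, the fiber $g^{-1}(\xi)$ is the derived zero locus, inside the $T$-scheme $\mathbb{C}^{I_m}\times T$, of the section
\[
\alpha\longmapsto v\circ f_\alpha(A, B)=v+\sum_{(i, j)\in I_m}\alpha_{ij}\,vA^iB^j
\]
of the trivial bundle with fiber $V^\vee$. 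This section is affine-linear in $\alpha$, so $g^{-1}(\xi)$ is presented by an explicit two-term Koszul complex over $T$.

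Next I would identify $h^{-1}(I)$ with the same derived scheme. Through the inverse of $\Upsilon$ from \cite{BFG}, the family $I$ is represented by the monad (\ref{perv:V}), and computing $\mathrm{RHom}(I, \mathcal{O}_{\mathbb{P}^2}(m))$ by means of this monad exhibits it as a two-term complex of free $T$-modules. The diagram (\ref{dia:V}) shows that a $T$-point of $\mathfrak{M}_{\mathbb{C}^2}(m, d)$ lying over $\xi$ is precisely a morphism of complexes from the monad to $\mathcal{O}_{\mathbb{P}^2}(m)$ of the prescribed form $\gamma=(v\circ\overline{g}_\alpha, v\circ\overline{h}_\alpha, \overline{f}_\alpha)$, the relation $v\circ f_\alpha(A, B)=0$ being exactly the condition that the left square in (\ref{dia:V}) commute, i.e.\ that $\gamma$ defines a morphism $I\to\mathcal{O}_{\mathbb{P}^2}(m)$ in $D^b(\mathbb{P}^2\times T)$. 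On the other hand, by the construction of $\mathfrak{T}_{\mathbb{C}^2}(m, d)$ and the cone picture in the proof of Lemma~\ref{lem:class}, the fiber $h^{-1}(I)$ is an open substack of the derived linear stack attached to $\mathrm{RHom}(I, \mathcal{O}_{\mathbb{P}^2}(m))$ — the derived moduli of morphisms $t\colon I\to\mathcal{O}_{\mathbb{P}^2}(m)$ — so the monad computation matches its derived structure against the Koszul presentation of $g^{-1}(\xi)$. It remains to check that the two open conditions correspond, which on classical points is exactly Lemma~\ref{lem:class} (with the affine chart $\mathbb{C}^{I_m}\subset|\mathcal{O}_{\mathbb{P}^2}(m)|$ of (\ref{open:I}) matching the normalization built into $f_\alpha$), and this identifies the two derived opens and hence gives $g^{-1}(\xi)\simeq h^{-1}(I)$.

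The step I expect to be the main obstacle is the comparison of derived structures just outlined: one needs the construction of the derived stack $\mathfrak{T}_{\mathbb{C}^2}(m, d)$ from \cite{T}, \cite{Todstack} in enough detail to identify its fiber over $I\in\mathfrak{U}_{\mathbb{C}^2}(d)$ with the derived mapping stack $\mathrm{RMap}(I, \mathcal{O}_{\mathbb{P}^2}(m))$ cut out by an open condition, and one must verify that the obstruction theory read off from the monad (\ref{perv:V}) agrees with the Koszul complex of the affine-linear section $v\circ f_\alpha(A, B)$ defining $\mathfrak{M}_{\mathbb{C}^2}(m, d)$. The classical content of the statement is already essentially contained in Lemma~\ref{lem:class} and the diagram (\ref{dia:V}); it is the derived enhancement that requires this bookkeeping, and it uses in an essential way that $\Upsilon$ is an equivalence of derived, and not merely classical, stacks.
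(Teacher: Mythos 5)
Your overall strategy is the same as the paper's: reduce, via the square (\ref{com:Ups}) and the equivalence $\Upsilon$, to comparing the fiber of $g$ (the derived zero locus of $\alpha\mapsto v\circ f_{\alpha}(A,B)$ in $\mathbb{C}^{I_m}$) with the fiber of $h$ over the corresponding perverse sheaf $I$, computed through the ADHM monad (\ref{perv:V}). One packaging difference is worth noting: rather than identifying the full derived fibers over arbitrary derived test schemes $T$, the paper first proves that $\Theta$ is an isomorphism on classical truncations and then checks that it induces a quasi-isomorphism of \emph{relative tangent complexes} $\mathbb{T}_g\to\Theta^{*}\mathbb{T}_h$ at points; given the classical isomorphism and the equivalence of the bottom arrow, this suffices and avoids most of the functor-of-points bookkeeping you anticipate. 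Also, $h^{-1}(I)$ is not the derived linear stack of $\mathrm{RHom}(I,\mathcal{O}_{\mathbb{P}^2}(m))$ but (an open chart of) its projectivization: the paper's $\mathbb{T}_h|_y$ is $\mathrm{Cone}\bigl(\mathbb{C}\to\mathrm{RHom}(I,\mathcal{O}_{\mathbb{P}^2}(m))\bigr)$, and on the chart $\mathbb{C}^{I_m}$ this becomes $H^0(\mathcal{O}_{\mathbb{P}^2}(m))/\mathbb{C}\overline{f}_{\alpha}\to V^{\vee}$; the normalization built into $f_{\alpha}$ absorbs this quotient.

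The genuine gap is the step you yourself flag as ``the main obstacle'' and then do not carry out; it is the entire mathematical content of the proof. Lemma~\ref{lem:class} only identifies the target fiber $h^{-1}(I)^{\mathrm{cl}}$ with $\mathbb{P}(\Hom(I,\mathcal{O}_{\mathbb{P}^2}(m)))^{\circ}$ abstractly; it does \emph{not} yet match this with the zero locus of $\alpha\mapsto v\circ f_{\alpha}(A,B)$, so even the classical bijectivity of $\Theta$ is not ``exactly Lemma~\ref{lem:class}.'' What is needed, for both the classical and the tangent-complex comparison, is the explicit computation of the map $\eta$ of Lemma~\ref{lem:HomI}: one must show that under the identification $\Ext^2_{\mathbb{P}^2}(\mathcal{H}^1(I),\mathcal{O}_{\mathbb{P}^2}(m))\cong N^{\vee}\subset V^{\vee}$ (with $N=V/\Sigma$ for the minimal $A,B$-invariant subspace $\Sigma\ni u$, on which $v$ vanishes), the map $\eta$ sends $c=\sum c_{ijk}X^iY^jZ^k$ to $v\circ\sum c_{ijk}A^iB^j$. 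The paper extracts this from the three-row diagram (\ref{dia:sigma}) relating the monads for $\Sigma$, $V$ and $N$. Only after this identification do the kernel of $\eta$ and the zero locus of $v\circ f_{\alpha}(A,B)$, and likewise the two-term complexes $\iota$ and $\iota'$, coincide. Your proposal correctly locates this computation but supplies no argument for it, so as written the proof is incomplete at its decisive point.
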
	   
\begin{proof}
	We first prove that $\Theta$ is an isomorphism on classical truncations. 
	Let $(u, v, A, B)$ be a point in $\mathfrak{U}_{\mathbb{C}^2}(d)$
	and $I$ be the associated perverse coherent sheaf 
	via the construction (\ref{perv:V}). 
	From the commutative diagram (\ref{com:Ups}), 
	it is enough to show that $\Theta$ induces the isomorphism 
	\begin{align}\label{theta:l}
		\Theta^{\rm{cl}} \colon g^{-1}(u, v, A, B)^{\rm{cl}} \stackrel{\cong}{\to}
		h^{-1}(I)^{\rm{cl}}. 
		\end{align}
	Here, the right hand side is given by 
	$\mathbb{P}(\Hom(I, \mathcal{O}_{\mathbb{P}^2}(m)))^{\circ}$ by Lemma~\ref{lem:class}
	and the left hand side is given by 
	the zero-locus of the map 
	\begin{align}\label{map:Im}
		\mathbb{C}^{I_m} \to V^{\vee}, \ 
		\alpha \mapsto v \circ f_{\alpha}(A, B). 
		\end{align} 
	Let $\Sigma \subset V$ be the intersection of all 
	subspaces $S \subset V$ satisfying 
	$A(S) \subset S$, $B(S) \subset S$ and $u \in S$. 
	We set $N=V/\Sigma$. Then we have 
	the commutative diagram 
	\begin{align}\label{dia:sigma}
		\xymatrix{
	\Sigma \otimes \mathcal{O}_{\mathbb{P}^2}(-1) \ar[r]^-{\phi|_{\Sigma}} \ar[d] & 
	(\Sigma^{\oplus 2} \oplus \mathbb{C}) \otimes \mathcal{O}_{\mathbb{P}^2} \ar[r]^-{\psi|_{\Sigma}} \ar[d]^-{\delta} 
	& \Sigma \otimes \mathcal{O}_{\mathbb{P}^2}(1) \ar[d] \\
	V \otimes \mathcal{O}_{\mathbb{P}^2}(-1) \ar[r]^-{\phi} \ar[d] & 
	(V^{\oplus 2} \oplus \mathbb{C}) \otimes \mathcal{O}_{\mathbb{P}^2} \ar[r]^-{\psi} \ar[d] 
	& V \otimes \mathcal{O}_{\mathbb{P}^2}(1)	\ar[d] \\
	N \otimes \mathcal{O}_{\mathbb{P}^2}(-1) \ar[r] & 
	N^{\oplus 2} \otimes \mathcal{O}_{\mathbb{P}^2} \ar[r] 
	& N \otimes \mathcal{O}_{\mathbb{P}^2}(1).
	}
		\end{align}
		Recall that the middle horizontal complex is isomorphic to $I$.
	By~\cite[Theorem~4.10]{Jar2}, the top and bottom horizontal complexes are isomorphic to $I_Z$ and $Q[-1]$, respectively, and the vertical arrows between these complexes are identified with the
	exact sequence (\ref{exact:I}) in $\mathcal{A}$. Moreover, we have $v|_{\Sigma}=0$
	by~\cite[Proposition~2.8]{NLecture}. 
	Take a non-zero element 
	\begin{align*}
		c=\sum_{i+j+k=m}c_{ijk} X^i Y^j Z^k \in H^0(\mathcal{O}_{\mathbb{P}^2}(m)). 
		\end{align*}
	Under the identification $\Hom(I_Z, \mathcal{O}_{\mathbb{P}^2}(m))=
	H^0(\mathcal{O}_{\mathbb{P}^2}(m))$, the morphism 
	$I_Z \to \mathcal{O}_{\mathbb{P}^2}(m)$ corresponding to $c$ is represented by 
	the commutative diagram 
	\begin{align}\label{diag66}
		\xymatrix{\Sigma \otimes \mathcal{O}_{\mathbb{P}^2}(-1) \ar[r]^-{\phi|_{\Sigma}} \ar[d] & 
			(\Sigma^{\oplus 2} \oplus \mathbb{C}) \otimes \mathcal{O}_{\mathbb{P}^2} \ar[r]^-{\psi|_{\Sigma}} \ar[d]_-{(0, 0, c)}
			& \Sigma \otimes \mathcal{O}_{\mathbb{P}^2}(1) \ar[d] \\
			0 \ar[r] & \mathcal{O}_{\mathbb{P}^2}(m) \ar[r] & 0. 	
	}
		\end{align}
	We compute $\eta(c)$ in Lemma~\ref{lem:HomI} 
	using the diagram (\ref{dia:sigma}). 
	Since $\mathcal{H}^1(I)$ is supported on $\mathbb{C}^2$, 
	via the isomorphism $Z^m \colon \mathcal{O}_{\mathbb{C}^2} \stackrel{\cong}{\to}
	\mathcal{O}_{\mathbb{P}^2}(m)|_{\mathbb{C}^2}$
	we have 
	\begin{align}\label{isom:Z}
		\Ext^2_{\mathbb{P}^2}(\mathcal{H}^1(I), \mathcal{O}_{\mathbb{P}^2}(m)) \cong 
		H^0(\mathcal{E}xt^2_{\mathbb{C}^2}(\mathcal{H}^1(I), \mathcal{O}_{\mathbb{C}^2})) \cong N^{\vee}
		\subset V^{\vee}. 		
		\end{align}
	In order to compute $\eta(c)$ as an element of $V^{\vee}$, 
	we extend the middle vertical arrow in (\ref{diag66})
	to $\varepsilon$
	as follows
	\begin{align*}
	    \xymatrix{
	    (\Sigma^{\oplus 2} \oplus \mathbb{C}) \otimes \mathcal{O}_{\mathbb{P}^2} \ar[d]_-{(0, 0, c)} 
	    \ar[r]^-{\delta} & 
	     (V^{\oplus 2} \oplus \mathbb{C}) \otimes \mathcal{O}_{\mathbb{P}^2} \ar[ld]^-{\varepsilon=(0, 0, c)} \\
	     \mathcal{O}_{\mathbb{P}^2}(m). 
	    }
	\end{align*}
		We then compose it with the morphism $\phi$ in (\ref{dia:sigma})
	to obtain the morphism 
	\begin{align*}
	    V \otimes \mathcal{O}_{\mathbb{P}^2}(-1) \stackrel{\phi}{\to}  (V^{\oplus 2} \oplus \mathbb{C}) \otimes \mathcal{O}_{\mathbb{P}^2} \stackrel{\varepsilon}{\to} \mathcal{O}_{\mathbb{P}^2}(m). 
	\end{align*}
	The morphism above factors through $N \otimes \mathcal{O}_{\mathbb{P}^2}(-1)$, giving 
	a morphism 
	\begin{align*}
	v \circ Zc \colon	N \otimes \mathcal{O}_{\mathbb{P}^2}(-1)
	\to \mathcal{O}_{\mathbb{P}^2}(m).
	\end{align*}
	The above morphism represents 
	$\eta(c)$. Under the isomorphism (\ref{isom:Z}), 
	we have 
	\begin{align*}
		\eta(c)=v \circ \sum_{i+j+k=m}c_{ijk}A^i B^j. 
		\end{align*}
	Note that we have $\alpha_{ij}=c_{ijk}/c_{00m}$ under the 
	embedding (\ref{open:I}), where $k=m-i-j$. 
	Therefore comparing with (\ref{map:Im}), 
	we see the isomorphism (\ref{theta:l}). 
	
	Finally, we show that $\Theta$ induces an equivalence of derived stacks. 
	As $\Theta$ is an isomorphism on classical truncations and the 
	bottom arrow in (\ref{com:Ups}) is an equivalence, it is enough 
	to show that $\Theta$ induces a quasi-isomorphism 
	on relative tangent complexes 
	$\mathbb{T}_g \stackrel{\sim}{\to}\Theta^{\ast}\mathbb{T}_h$. 
	Let us take a point $x=(u, v, A, B, \alpha)$ in $\mathfrak{M}_{\mathbb{C}^2}(m, d)$
	such that $[A, B]+u \circ v=0$ and $v \circ f_{\alpha}(A, B)=0$. 
	Then we have 
	\begin{align*}
		\mathbb{T}_g|_{x}=(\mathbb{C}^{I_m} \stackrel{\iota}{\to} V^{\vee}), \ 
		\iota \colon 
		(e_{ij}) \mapsto \sum_{(i, j) \in I_m} e_{ij} v\circ A^i B^j. 
		\end{align*}
		Let $y=(\mathcal{O}_{\mathbb{P}^2} \to F)$ be the 
	point in $\mathfrak{T}_{\mathbb{C}^2}(m, d)$ corresponding to 
	$\Theta(x)$. The tangent complex of $\mathfrak{T}_{\mathbb{C}^2}(m, d)$
	at $y$ 
	is 
	\begin{align*}
		\mathrm{RHom}(\mathcal{O}_{\mathbb{P}^2} \to F, F)
		=\mathrm{RHom}(I, F(m)),
		\end{align*}
	where $I=(\mathcal{O}_{\mathbb{P}^2}(m) \to F(m))$. 
The tangent complex $\mathbb{T}_h|_{y}$ is 
\begin{align*}
	\mathrm{Cone}\left(\mathrm{RHom}(I, F(m)) \to \mathrm{RHom}(I, I)_0[1]\right)
	\cong \mathrm{Cone}\left(\mathbb{C} \to \mathrm{RHom}(I, \mathcal{O}_{\mathbb{P}^2}(m))\right). 
	\end{align*}
Here, $\mathrm{RHom}(I, I)_0$ is the cone of the natural morphism
$\mathbb{C} \to \mathrm{RHom}(I, I)$, and 
the map $\mathbb{C} \to \mathrm{RHom}(I, \mathcal{O}_{\mathbb{P}^2}(m))$ 
is the composition of the above natural morphism 
with $I \to \mathcal{O}_{\mathbb{P}^2}(m)$. 
From the calculation of $\eta$, we have 
\begin{align*}
	\mathbb{T}_{h}|_{y}=\left(H^0(\mathcal{O}_{\mathbb{P}^2}(m))/\mathbb{C} 
	\overline{f}_{\alpha}
	\stackrel{\iota'}{\to} V^{\vee}   \right), \ 
\iota' \colon	(c_{ijk}) \mapsto 
v \circ \sum_{i+j+k=m}c_{ijk} A^i B^j. 
	\end{align*}
The required quasi-isomorphism is 
\begin{align*}
	\xymatrix{
\mathbb{C}^{I_m} \ar[r]^-{\iota} \ar[d]_-{\cong} & V^{\vee} \ar@{=}[d]	\\
H^0(\mathcal{O}_{\mathbb{P}^2}(m))/\mathbb{C}\overline{f}_{\alpha} \ar[r]^-{\iota'} & V^{\vee}. 
}
	\end{align*}
Here the left vertical arrow sends $(e_{ij})$ to 
$(c_{ijk})$ for $c_{ijk}=e_{ij}$ when $i+j>0$ and $c_{00m}=0$. 
	\end{proof}

	\subsection{The categorical DT/PT correspondence for \texorpdfstring{$\mathbb{C}^3$}{C3}}\label{sub:45}
		We define 
	Zariski open subsets 
	\begin{align}\label{ired2}
		\lvert \mathcal{O}_{\mathbb{P}^2}(m) \rvert^{\rm{red}} \subset
		\lvert \mathcal{O}_{\mathbb{P}^2}(m) \rvert, \ 
		(\mathbb{C}^{I_m})^{\rm{red}} :=	\lvert \mathcal{O}_{\mathbb{P}^2}(m) \rvert^{\rm{red}}
		\cap \mathbb{C}^{I_m}, 
	\end{align}
	where $\lvert \mathcal{O}_{\mathbb{P}^2}(m) \rvert^{\rm{red}}$ is the locus of reduced 
	plane
	curves in $\mathbb{P}^2$. 
	
Let 
$X=\mathrm{Tot}_{\mathbb{P}^2}(\omega_{\mathbb{P}^2})$. Let 
\begin{align*}
\mathcal{T}_X^{\rm{red}}(m, d)
\end{align*} be the classical moduli stack of pairs $(F, s)$, where $F$
is a one-dimensional sheaf on $X$ with support a reduced plane curve
of degree $m$ 
in $\mathbb{P}^2$, and $s \colon \mathcal{O}_X \to F$ is surjective in dimension one. 
The open immersion $\mathbb{C}^2 \subset \mathbb{P}^2$ determines 
the open immersion $\mathbb{C}^3 \subset X$. 
We have the open substack 
\begin{align*}
	\mathcal{T}_{\mathbb{C}^3}^{\rm{red}}(m, d)
	\subset \mathcal{T}_X^{\rm{red}}(m, d)
\end{align*}
to be consisting of $(F, s)$ such that 
$\mathrm{Cok}(s)$ and $F_{\rm{tor}}$ are supported on $\mathbb{C}^3$, 
and the one-dimensional support of $F$ is an element from $(\mathbb{C}^{I_m})^{\rm{red}}$. 
Here $F_{\rm{tor}} \subset F$ is the maximal zero-dimensional subsheaf. 

Let $\Tr W_{m, d}$ be the function 
\begin{align*}
	\Tr W_{m, d} \colon 
	\left(V^{\oplus 2} \oplus V^{\vee} \oplus \mathfrak{g}^{\oplus 3} \times (\mathbb{C}^{I_m})^{\rm{red}}\right)\big/GL(V)  \to \mathbb{C}
	\end{align*}
defined by the formula 
\begin{align}\label{W:formula}
(u_1, u_2, v, A, B, C, \alpha) \mapsto 
	v \circ f_{\alpha}(A, B)(u_2)+ \Tr C(u_1 \circ v+[A, B]). 
	\end{align}
	
	The next theorem describes $\mathcal{T}_{\mathbb{C}^3}^{\rm{red}}(m, d)$ as a critical locus. 
	Here it is essential to restrict to reduced curves, 
	see Remark~\ref{rmk:reduced}. 
\begin{thm}\label{lemma:globalcritical}
	We have
	\begin{align}\label{isom:critW}
		\mathcal{T}_{\mathbb{C}^3}^{\rm{red}}(m, d) \cong
		\mathrm{Crit}(\Tr W_{m, d}). 
		\end{align}
	\end{thm}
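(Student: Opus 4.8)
The plan is to give an explicit global presentation of $\mathcal{T}_{\mathbb{C}^3}^{\rm{red}}(m,d)$ as a quotient stack cut out by equations, and then to recognise these equations as the Jacobian ideal of $\Tr W_{m,d}$; the equivalence $\Theta\colon\mathfrak{M}_{\mathbb{C}^2}(m,d)\xrightarrow{\sim}\mathfrak{T}_{\mathbb{C}^2}(m,d)$ established above is the main input. First I would pass from $X$ to $\mathbb{P}^2$: since $X=\mathrm{Tot}_{\mathbb{P}^2}(\omega_{\mathbb{P}^2})$ and $F$ is one-dimensional with support on the zero section, pushforward along $\pi\colon X\to\mathbb{P}^2$ turns $F$ into a one-dimensional sheaf $\overline{F}=\pi_*F$ on $\mathbb{P}^2$ together with a nilpotent Higgs field $\theta\colon\overline{F}\to\overline{F}\otimes\omega_{\mathbb{P}^2}$ (multiplication by the fibre coordinate), and it turns the $\mathcal{O}_X$-module map $s$ into just a section $\overline{s}\colon\mathcal{O}_{\mathbb{P}^2}\to\overline{F}$, because $\pi_*s$ is recovered from $\overline{s}$ and $\theta$ through the grading of $\pi_*\mathcal{O}_X=\mathrm{Sym}(\omega_{\mathbb{P}^2}^{\vee})$; this correspondence is invertible. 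The conditions in the definition of $\mathcal{T}_{\mathbb{C}^3}^{\rm{red}}(m,d)$ — reduced support curve in $(\mathbb{C}^{I_m})^{\rm{red}}$, and $\mathrm{Cok}(s)$ and $F_{\rm{tor}}$ supported on $\mathbb{C}^3$ — say precisely that the support curve of $\overline{F}$ is $C_\alpha$ for $\alpha\in(\mathbb{C}^{I_m})^{\rm{red}}$ and that $(\overline{F},\overline{s})$ is trivialised near $l_{\infty}$ exactly as in the definition of $\mathfrak{T}_{\mathbb{C}^2}(m,d)$. Hence $\mathcal{T}_{\mathbb{C}^3}^{\rm{red}}(m,d)$ is the moduli stack of triples $(\overline{F},\overline{s},\theta)$ with $(\overline{F},\overline{s})$ a point of $\mathfrak{T}_{\mathbb{C}^2}(m,d)$ over $(\mathbb{C}^{I_m})^{\rm{red}}$ and $\theta$ a compatible nilpotent Higgs field.

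Next I would use $\Theta$ to replace $(\overline{F},\overline{s},\alpha)$ by extended ADHM data $(u_1,v,A,B,\alpha)$ with $[A,B]+u_1\circ v=0$ and $v\circ f_\alpha(A,B)=0$ (the ADHM framing vector being $u_1$), and then read the Higgs field $\theta\colon\overline{F}\to\overline{F}\otimes\mathcal{O}_{\mathbb{P}^2}(-3)$ through the monad (\ref{perv:V}) presenting $\overline{F}$. As in the ADHM description of morphisms of perverse coherent sheaves, such a map is computed on the level of the terms of the monad, and this produces a $GL(V)$-equivariant datum consisting of an endomorphism $C\in\mathfrak{g}$ together with an auxiliary vector $u_2\in V$ which records the section $\overline{s}$ (equivalently, the fibre-direction part of $s$) in ADHM coordinates; the compatibility of $\theta$ with the $\mathcal{O}_{\mathbb{P}^2}$-action (i.e.\ with $A,B$), with $\overline{s}$, and with the $l_{\infty}$-framing gives the remaining relations. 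The upshot is an identification of $\mathcal{T}_{\mathbb{C}^3}^{\rm{red}}(m,d)$ with the closed substack $Z\subset(V^{\oplus 2}\oplus V^{\vee}\oplus\mathfrak{g}^{\oplus 3}\times(\mathbb{C}^{I_m})^{\rm{red}})/GL(V)$ defined by an explicit system of equations in $(u_1,u_2,v,A,B,C,\alpha)$, reconstructing $(F,s)$ from such data by the monad (\ref{perv:V}) together with the cone construction of Lemma~\ref{lem:class}, now incorporating $C$.

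Then I would match $Z$ with $\mathrm{Crit}(\Tr W_{m,d})$ by differentiating
\[\Tr W_{m,d}=v\circ f_\alpha(A,B)(u_2)+\Tr C(u_1\circ v+[A,B])\]
in each variable: $\partial_C$ recovers the ADHM relation $u_1\circ v+[A,B]=0$; $\partial_{u_2}$ and $\partial_{\alpha_{ij}}$ recover $v\circ f_\alpha(A,B)=0$ and $v A^iB^j(u_2)=0$; $\partial_{u_1}$ gives $v\circ C=0$; $\partial_v$ gives $f_\alpha(A,B)(u_2)+Cu_1=0$; and $\partial_A,\partial_B$ give the commutation relations of $C$ with $A,B$, twisted by rank-one terms because $V$ is ADHM data rather than the module itself. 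One checks termwise that this system is exactly the defining system of $Z$ from the previous step, which yields the isomorphism (\ref{isom:critW}). It remains to see this is an isomorphism of (classical) stacks and not merely a bijection on points; the cleanest way is to note that the two constructions in the previous step — pushforward-and-$\Theta$ one way, monad reconstruction the other — are functorial in flat families and mutually inverse, so they are already an isomorphism of stacks compatible with the description of $Z$. Alternatively both sides carry symmetric obstruction theories, the one on $\mathcal{T}_{\mathbb{C}^3}^{\rm{red}}(m,d)$ because $X$ is a Calabi--Yau threefold, and these match.

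I expect the main obstacle to be the second step: transporting the Higgs field $\theta$ and the section $\overline{s}$ through the ADHM monad so that the moduli equations come out \emph{literally} equal to $\partial\Tr W_{m,d}=0$, in particular correctly producing the extra framing vector $u_2$ and the twisted commutators $\partial_A,\partial_B$, rather than just equal up to the Jacobian ideal. This is precisely where reducedness of the support curve enters (cf.\ Remark~\ref{rmk:reduced}): it is what keeps $\overline{F}$ — and hence the equations of type (\ref{map:Im}) and the computation of its twisted endomorphisms — controlled by the middle term of the monad, so that the passage from $\theta$ to $C\in\mathfrak{g}$ is well defined; for non-reduced curves this breaks down and (\ref{isom:critW}) fails.
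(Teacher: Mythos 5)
Your proposal is correct in substance and follows the same underlying mechanism as the paper's proof, but you unwind by hand what the paper obtains by citing general machinery. The paper's argument is two lines: by (the proof of) \cite[Lemma~5.5.4]{T} one has $\mathcal{T}_X^{\rm{red}}(m,d)\cong \Omega_{\mathfrak{T}^{\rm{red}}_{\mathbb{P}^2}(m,d)}[-1]^{\rm{cl}}$, this restricts to $\mathcal{T}_{\mathbb{C}^3}^{\rm{red}}(m,d)\cong \Omega_{\mathfrak{T}^{\rm{red}}_{\mathbb{C}^2}(m,d)}[-1]^{\rm{cl}}$, and since $\Theta$ identifies $\mathfrak{T}^{\rm{red}}_{\mathbb{C}^2}(m,d)$ with the derived zero locus $\mu^{-1}(0)/GL(V)$, the standard description of the $(-1)$-shifted cotangent of a derived zero locus gives $\Omega[-1]^{\rm{cl}}=\mathrm{Crit}(\langle \mu,(C,u_2)\rangle)=\mathrm{Crit}(\Tr W_{m,d})$. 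Your spectral-correspondence step (pushforward plus Higgs field $\theta$) is exactly the classical content of the first isomorphism, and your termwise differentiation is exactly the classical content of the last step; indeed $\Tr W_{m,d}$ is literally the pairing of the equations $([A,B]+u_1\circ v,\ v\circ f_\alpha(A,B))\in\mathfrak{g}\oplus V^{\vee}$ against the dual variables $(C,u_2)\in\mathfrak{g}\oplus V$, so your Jacobian computation is guaranteed to reproduce the right system.

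The step you flag as the main obstacle --- transporting $\theta$ through the monad so that the moduli equations come out \emph{literally} equal to $\partial \Tr W_{m,d}=0$ --- is precisely what the derived formulation makes unnecessary: once $\Theta$ is an equivalence of \emph{derived} stacks (which the paper establishes via the relative tangent complexes), it identifies the shifted cotangents automatically, and $(C,u_2)$ is just a point of $H^{-1}(\mathbb{L})\subset \mathfrak{g}\oplus V$; no monad chase for $\theta$ is needed. If you insist on the hands-on route, that step is where all the work lies and you have not carried it out. Two smaller corrections: the role of reducedness is not about controlling the monad computation of $\theta$, but about ensuring that the pushforward $(\overline{F},\overline{s})$ of a pair with zero-dimensional cokernel on $X$ again has zero-dimensional cokernel on $\mathbb{P}^2$ (for a non-reduced support curve, $\overline{F}$ can have generic rank $\geq 2$ on a lower-degree curve and $\mathrm{Cok}(\overline{s})$ becomes one-dimensional, which is what Remark~\ref{rmk:reduced} records); and your fallback via matching symmetric obstruction theories would not upgrade a pointwise bijection to an isomorphism of stacks --- the family/derived argument is the one that works.
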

\begin{proof}
		We set 
	\begin{align*}
		\mathfrak{T}_{\mathbb{P}^2}^{\rm{red}}(m, d) &:=
		\mathfrak{T}_{\mathbb{P}^2}(m, d)\times_{\lvert \mathcal{O}_{\mathbb{P}^2}(m) \rvert}	\lvert \mathcal{O}_{\mathbb{P}^2}(m) \rvert^{\rm{red}}, \\
		\mathfrak{T}_{\mathbb{C}^2}^{\rm{red}}(m, d) &:=
		\mathfrak{T}_{\mathbb{C}^2}(m, d)\times_{\mathbb{C}^{I_m}} (\mathbb{C}^{I_m})^{\rm{red}}. 
	\end{align*}
By the proof of~\cite[Lemma~5.5.4]{T}, we have an isomorphism
\begin{align}\label{isom:reduced}
	\mathcal{T}_X^{\rm{red}}(m, d) \cong 
	\Omega_{\mathfrak{T}_{\mathbb{P}^2}^{\rm{red}}(m, d)}[-1]^{\rm{cl}}. 
	\end{align}
	The above isomorphism restricts to the isomorphism of open substacks 
	\begin{align*}
		\mathcal{T}_{\mathbb{C}^3}^{\rm{red}}(m, d) \cong 
	\Omega_{	\mathfrak{T}_{\mathbb{C}^2}^{\rm{red}}(m, d)}[-1]^{\rm{cl}}. 	
		\end{align*}
		The lemma follows from the explicit description of 
	the $(-1)$-shifted cotangent stack, see~\cite[Section~2.1.1]{T}. 
	\end{proof}
	\begin{remark}\label{rmk:reduced}
	We need to restrict to reduced curves in the above theorem 
	as we use the isomorphism (\ref{isom:reduced})
	from~\cite[Lemma~5.5.4]{T}.
	Otherwise, a pair $(F, s)$ from $X$ with zero-dimensional cokernel 
	may push-forward to a pair on $\mathbb{P}^2$ with one-dimensional cokernel, 
	so the isomorphism (\ref{isom:reduced}) may not hold. 
	\end{remark}

We have the open substacks, which are quasi-projective schemes:
\begin{align*}
	I_{\mathbb{C}^3}^{\rm{red}}(m, d) \subset 
	\mathcal{T}_{\mathbb{C}^3}^{\rm{red}}(m, d)
	\supset P_d^{\rm{red}}(\mathbb{C}^3, m).
	\end{align*}
	Here $I_d^{\rm{red}}(\mathbb{C}^3, m)$
	is the DT moduli space
	consisting of $(F, s)$ such that $s$ is 
	surjective, and
	$P_d^{\rm{red}}(\mathbb{C}^3, m)$ is the
	PT moduli space consisting of $(F, s)$
	such that $F$ is pure. 
Let $\chi_0 \colon GL(V) \to \mathbb{C}^{\ast}$ be the determinant character
$g \mapsto \det g$. 
Under the isomorphism (\ref{isom:critW}), 
the above open substacks correspond to GIT-semistable loci, see~\cite[Proposition~5.5.2]{T}:
\begin{align*}
	I_{\mathbb{C}^3}^{\rm{red}}(m, d)=
	\mathcal{T}_{\mathbb{C}^3}^{\rm{red}}(m, d)^{\chi_0\text{-ss}}, \
		P_{\mathbb{C}^3}^{\rm{red}}(m, d)=
	\mathcal{T}_{\mathbb{C}^3}^{\rm{red}}(m, d)^{\chi_0^{-1}\text{-ss}}. 
	\end{align*}
We define 
\begin{align}\label{regularfunction}
	\Tr W_{m, d}^{\pm} \colon 
	\left((V^{\oplus 2} \oplus V^{\vee} \oplus \mathfrak{g}^{\oplus 3})^{\chi_0^{\pm}\text{-ss}} \times (\mathbb{C}^{I_m})^{\rm{red}}\right)\big/GL(V) \to \mathbb{C}
\end{align}	
to be the restriction of $\Tr W_{m, d}$ to the GIT semistable loci. 
Then we have 
\begin{align*}
	I_{\mathbb{C}^3}^{\rm{red}}(m, d)=\mathrm{Crit}(\Tr W_{m, d}^+), \ 
		P_{\mathbb{C}^3}^{\rm{red}}(m, d)=\mathrm{Crit}(\Tr W_{m, d}^-). 
	\end{align*}
We define the following dg-categories
\begin{align*}
	\mathcal{DT}_{\mathbb{C}^3}^{\rm{red}}(m, d) &:=
	\mathrm{MF}\left((V^{\oplus 2} \oplus V^{\vee} \oplus \mathfrak{g}^{\oplus 3})^{\chi_0\text{-ss}} \times (\mathbb{C}^{I_m})^{\rm{red}}\big/GL(V), \Tr W_{m, d}^+ \right), \\
		\mathcal{PT}_{\mathbb{C}^3}^{\rm{red}}(m, d) &:=
	\mathrm{MF}\left((V^{\oplus 2} \oplus V^{\vee} \oplus \mathfrak{g}^{\oplus 3})^{\chi_0^{-1}\text{-ss}} \times (\mathbb{C}^{I_m})^{\rm{red}}\big/GL(V) , \Tr W_{m, d}^- \right). 
	\end{align*}
	The above dg-categories categorify DT invariants (resp.~PT invariants)
	for Hilbert schemes of 1-dimensional subschemes (resp.~PT stable pair 
	moduli spaces) with reduced 
	supports, 
	see~\cite[Theorem~1.1]{Eff}, \cite[Subsection 3.3]{T}. 
	\begin{remark}
	Following the definition of DT and PT categories in \cite{T}, one can also consider graded categories of matrix factorizations, with grading given by the 
	weight two $\mathbb{C}^{\ast}$-action on $v$ and $C$. The graded version also 
	recovers DT and PT invariants and 
	they have the same Grothendieck group as the ungraded one, 
	see~\cite[Proposition~3.3.6]{T}, \cite[Corollary~3.13]{T4}. 
	\end{remark}

\begin{proof}[Proof of Theorem \ref{thm:exam}]
The statement follows from Corollary~\ref{cor:variant} for $a=1$
and $\mu=-1/2-\varepsilon$ for $0<\varepsilon \ll 1$. 
\end{proof}
		
		\begin{proof}[Proof of Corollary \ref{cor14}]
		The claim follows as the isomorphism \eqref{K} from the proof of Corollary \ref{thm12}.
		\end{proof}
		
In the context of Corollary \ref{cor14}, we do not know whether the following is an isomorphism
\[K\left(\mathcal{DT}(d-d')\otimes \mathcal{PT}_{\mathbb{C}^3}^{\rm{red}}(m, d')\right)\cong 
		K\left(\mathcal{DT}(d-d')\right)\otimes K\left(\mathcal{PT}_{\mathbb{C}^3}^{\rm{red}}(m, d')\right).\]
To obtain a result analogous to the one of Corollary \ref{thm12}, we use equivariant K-theory, see \cite[Section 4]{PT0} for similar constructions and computations.
Let $T\cong (\mathbb{C}^*)^2$ be the natural two dimensional torus 
acting on $\mathbb{C}^2$.
We let $T$ act on $\mathbb{C}^3$ as follows: \[(t_1, t_2)\cdot(x, y, z)=(t_1x, t_2y, t_1^{-1}t_2^{-1}z).\]
It induces the action on 
$V^{\oplus 2} \oplus V^{\vee} \oplus \mathfrak{g}^{\oplus 3} \otimes \mathbb{C}^{I_m}$
by 
\begin{align*}
    (t_1, t_2) \cdot (u_1, u_2, v, A, B, C, 
    \alpha_{ij})=(u_1, u_2, v, t_1 A, t_2 B, 
    t_1^{-1}t_2^{-1}C, t_1^{-i}t_2^{-j}\alpha_{ij}). 
\end{align*}
The function (\ref{W:formula}) is invariant 
under the above $T$-action. 
We consider the equivariant categories $\mathcal{DT}^{\rm{red}}_{\mathbb{C}^3, T}(m, d)$ and $\mathcal{PT}^{\rm{red}}_{\mathbb{C}^3, T}(m, d)$ and we denote their Grothendieck groups by 
\[K_T(\mathcal{DT}_{\mathbb{C}^3}^{\rm{red}}(m, d))\text{ and }K_T(\mathcal{PT}_{\mathbb{C}^3}^{\rm{red}}(m, d)).\]
Let $\mathbb{K}:=K_T(\text{pt})$ and let $\mathbb{F}:=\text{Frac}\,\mathbb{K}$. For $V$ a $\mathbb{K}$-module, let $V_\mathbb{F}:=V\otimes_{\mathbb{K}}\mathbb{F}$. 

\begin{cor}\label{cor15}
Let $d\in \mathbb{N}$. Then there is an isomorphism of $\mathbb{F}$-vector spaces: \begin{align*}
    K_T\left(\mathcal{DT}_{\mathbb{C}^3}^{\rm{red}}(m, d)\right)_\mathbb{F}\cong
    \bigoplus_{d'=0}^d K_T\left(\mathcal{DT}(d-d')\right)_{\mathbb{F}}\otimes_{\mathbb{F}} K_T\left(\mathcal{PT}_{\mathbb{C}^3}^{\rm{red}}(m, d')\right)_\mathbb{F}.
\end{align*}
\end{cor}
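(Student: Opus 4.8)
The strategy is to mimic the proof of Corollary~\ref{thm12}, working in the $T$-equivariant setting and then tensoring with $\mathbb{F}$ to kill the failure of the K\"unneth formula over $\mathbb{K}$. First I would apply the $T$-equivariant version of Corollary~\ref{cor:variant} with $a=1$ and $\mu=-1/2-\varepsilon$ (the function $\Tr W_{m,d}$ is $T$-invariant, and passing to the open subset $(\mathbb{C}^{I_m})^{\rm{red}}$ is allowed by the last sentence of Subsection~\ref{ss:proof}). This gives a $T$-equivariant semiorthogonal decomposition of $\mathcal{DT}^{\rm{red}}_{\mathbb{C}^3, T}(m, d)$ with pieces $\big(\boxtimes_{i=1}^k \mathbb{S}(d_i)_{w_i,T}\big)\boxtimes \mathcal{PT}^{\rm{red}}_{\mathbb{C}^3, T}(m, d')$, hence, after taking Grothendieck groups,
\begin{equation*}
K_T\big(\mathcal{DT}_{\mathbb{C}^3}^{\rm{red}}(m, d)\big)\cong \bigoplus K_T\Big(\big(\boxtimes_{i=1}^k \mathbb{S}(d_i)_{w_i}\big)\boxtimes \mathcal{PT}^{\rm{red}}_{\mathbb{C}^3}(m, d')\Big),
\end{equation*}
the sum being over the same index set as in Theorem~\ref{thm:exam}. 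Then, exactly as in the passage from \eqref{decoK} to \eqref{K}, for each fixed $d'$ I would use the $T$-equivariant form of \cite[Theorem~1.1]{PT0} to recognize $\bigsqcup_{(d_i), (w_i)}\boxtimes_i \mathbb{S}(d_i)_{w_i}$ as a semiorthogonal decomposition of $\mathcal{DT}_T(d-d')$, yielding
\begin{equation*}
K_T\big(\mathcal{DT}_{\mathbb{C}^3}^{\rm{red}}(m, d)\big)\cong \bigoplus_{d'=0}^d K_T\Big(\mathcal{DT}(d-d')\boxtimes \mathcal{PT}^{\rm{red}}_{\mathbb{C}^3}(m, d')\Big).
\end{equation*}

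\textbf{The K\"unneth step.} The remaining task is to establish, after $-\otimes_{\mathbb{K}}\mathbb{F}$, an isomorphism
\begin{equation*}
K_T\big(\mathcal{DT}(d-d')\boxtimes \mathcal{PT}^{\rm{red}}_{\mathbb{C}^3}(m, d')\big)_{\mathbb{F}}\cong K_T\big(\mathcal{DT}(d-d')\big)_{\mathbb{F}}\otimes_{\mathbb{F}} K_T\big(\mathcal{PT}^{\rm{red}}_{\mathbb{C}^3}(m, d')\big)_{\mathbb{F}}.
\end{equation*}
Following the proof of \eqref{Kunneth}, I would use \cite[Theorem~2.10]{halp} to pick $T$-equivariant window subcategories $\mathbb{G}\subset \mathrm{MF}(\X(d-d')\times(\text{loops}), \Tr W)$ (or its variant with the extra $\mathbb{C}^{I_m}$-free factor) and $\mathbb{H}\subset \mathrm{MF}(\X^{f}(1,d')\times(\mathbb{C}^{I_m})^{\rm{red}}/GL, \Tr W^-)$ restricting to equivalences onto $\mathcal{DT}(d-d')$ and $\mathcal{PT}^{\rm{red}}_{\mathbb{C}^3}(m,d')$ respectively, so that $\mathbb{G}\boxtimes\mathbb{H}$ is a summand of a semiorthogonal decomposition of the ambient product category and restricts to $\mathcal{DT}(d-d')\boxtimes\mathcal{PT}^{\rm{red}}_{\mathbb{C}^3}(m,d')$. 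This produces the commuting square of split injections/surjections
\begin{equation*}
\begin{tikzcd}
K_T(\mathbb{G})\otimes_{\mathbb{K}} K_T(\mathbb{H})\arrow[d, "a"]\arrow[r, hook]& K_T(\text{ambient}_1)\otimes_{\mathbb{K}} K_T(\text{ambient}_2)\arrow[d, "b"]\arrow[l, two heads, shift left=0.13 cm]\\
K_T(\mathbb{G}\boxtimes\mathbb{H})\arrow[r, hook]& K_T(\text{ambient}_1\times\text{ambient}_2)\arrow[l, two heads, shift left=0.13 cm]
\end{tikzcd}
\end{equation*}
and it suffices to show $b$ becomes an isomorphism after $\otimes_{\mathbb{K}}\mathbb{F}$. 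The ambient categories are matrix factorization categories on quotient stacks of affine space by $GL$, so by the Thom isomorphism / Kn\"orrer-type reduction and homotopy invariance, $K_T$ of each ambient factor is a module over $\mathbb{K}\cdot R(GL(n))$ and the relevant $b$ is base-changed from the external product map for representation rings, which is an isomorphism after inverting enough of $\mathbb{K}$; here the Calabi--Yau torus $T$ is two-dimensional (so $\mathbb{K}=\mathbb{Z}[t_1^{\pm},t_2^{\pm}]$ has $\mathbb{F}$ a field) and the localization theorem makes the relevant $K$-groups finite free $\mathbb{F}$-modules concentrated on the (isolated) $T$-fixed loci, which multiply as a product.

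\textbf{Main obstacle.} The genuinely delicate point is the K\"unneth/base-change step for $b_{\mathbb{F}}$: unlike the proof of Corollary~\ref{thm12}, where the ambient stacks were $BGL$ and $K$ was literally a representation ring, here the ambient MF-categories carry a nontrivial potential $\Tr W^{\pm}$ and the open GIT-semistable conditions, so one cannot simply quote $K(\mathcal{X}^{af}(1,d'))\cong K_{GL(d')}(\mathrm{pt})$. I expect to handle this by either (i) periodicity/dimensional reduction identifying these MF categories with derived categories of the $(-1)$-shifted cotangent, then invoking the $T$-equivariant localization theorem to reduce $K_{T}(-)_{\mathbb{F}}$ to a sum over fixed points whose external product is manifestly multiplicative, or (ii) directly checking that the ambient product category's K-theory is, after $\otimes_{\mathbb{K}}\mathbb{F}$, the tensor product of the factors' K-theories by exhibiting a spanning set of matrix factorizations built as external products (as in \cite[Section~4]{PT0}). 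Route (i) is cleanest and is the one I would write up, pointing to the two-dimensionality of the Calabi--Yau torus as the reason the localized Grothendieck groups behave as finite-dimensional $\mathbb{F}$-vector spaces with a K\"unneth decomposition.
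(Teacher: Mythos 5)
Your plan follows the paper quite closely: the first step (the $T$-equivariant form of Corollary~\ref{cor:variant} together with the equivariant \cite[Theorem~1.1]{PT0}, then passing to Grothendieck groups) is exactly the paper's reduction to a K\"unneth statement, and your route (i) — dimensional reduction plus the $T$-equivariant localization theorem — is in substance the paper's proof. Two points in your sketch, however, need repair before the argument closes. First, the bookkeeping around dimensional reduction: the Koszul equivalence applies to \emph{graded} matrix factorizations, so one must replace $\mathbb{S}_T(d_i)_{w_i}$ and $\mathcal{PT}^{\rm{red}}_{\mathbb{C}^3, T}(m,d')$ by their graded analogues (same Grothendieck groups, by \cite[Corollary~3.13]{T4}), and then use that these graded categories are admissible in $D^b_T(\mathscr{C}(e))$ (via \cite[Corollary~3.3]{P2}) and in $D^b_T(\mathfrak{T}_{\mathbb{C}^2}^{\rm{red}}(m,d'))$ (via \cite[Theorem~2.10]{halp} and the Koszul equivalence), so that the K\"unneth question is transferred to $G$-theory of $\mathscr{D}\times \mathfrak{T}_{\mathbb{C}^2}^{\rm{red}}(m,d')$, where $\mathscr{D}$ is a product of commuting stacks; your diagram with $\mathbb{G}\boxtimes\mathbb{H}$ is set up for the MF ambient categories, where, as you yourself note, the ambient K-theory is not a representation ring, so the transfer must happen after dimensional reduction, not before.

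Second, and more importantly, your stated reason why localization gives the K\"unneth — ``(isolated) $T$-fixed loci'' making the localized groups ``finite free $\mathbb{F}$-modules'' whose external products multiply — is not correct and would fail if taken literally. The fixed locus of $\mathscr{D}$ is the quotient stack $\mathrm{pt}/L$ with $L$ a product of groups $GL(e)$, so its localized $G$-theory is $R(L)_{\mathbb{F}}$, which is infinite-dimensional over $\mathbb{F}$; and the fixed locus of $\mathfrak{T}_{\mathbb{C}^2}^{\rm{red}}(m,d')$ is not a finite set of points either ($T$ scales the curve coordinates $\alpha_{ij}$ nontrivially), nor does the argument need to identify it. The mechanism in the paper is one-sided: by the localization (concentration) theorem \cite{MR1297442}, after $\otimes_{\mathbb{K}}\mathbb{F}$ the $G$-theory of $\mathscr{D}\times\mathfrak{T}_{\mathbb{C}^2}^{\rm{red}}(m,d')$ is computed with $\mathscr{D}$ replaced by $\mathscr{D}^T=\mathrm{pt}/L$, and since $L$ then acts trivially one has $G_T\bigl(\mathrm{pt}/L\times\mathfrak{T}_{\mathbb{C}^2}^{\rm{red}}(m,d')\bigr)\cong R(L)\otimes_{\mathbb{Z}}G_T\bigl(\mathfrak{T}_{\mathbb{C}^2}^{\rm{red}}(m,d')\bigr)$, which gives the K\"unneth isomorphism over $\mathbb{F}$; no finiteness or isolatedness of fixed points is available or needed. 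With these corrections your route (i) coincides with the paper's argument.
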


\begin{proof}
As in the proof of Corollary \ref{cor14}, there is an isomorphism of $\mathbb{F}$-vector spaces:
\[K_T\left(\mathcal{DT}_{\mathbb{C}^3}^{\rm{red}}(m, d)\right)_\mathbb{F}\cong
    \bigoplus_{d'=0}^d K_T\left(\mathcal{DT}(d-d')\otimes \mathcal{PT}_{\mathbb{C}^3}^{\rm{red}}(m, d')\right)_\mathbb{F}.\]
It suffices to show that there is a Künneth isomorphism:
\begin{align}\label{kunnethpot}
    &K_T\left(\mathcal{DT}(d-d')\otimes \mathcal{PT}_{\mathbb{C}^3}^{\rm{red}}(m, d')\right)_\mathbb{F}\cong\\ &K_T\left(\mathcal{DT}(d-d')\right)_{\mathbb{F}}\otimes_{\mathbb{F}} K_T\left(\mathcal{PT}_{\mathbb{C}^3}^{\rm{red}}(m, d')\right)_\mathbb{F}.\notag
\end{align}
The category $\mathcal{DT}_T(d)$ has a semiorthogonal decomposition with summands Hall products of  $\mathbb{S}_T(e)_v$ for various $(e, v)\in \mathbb{N}\times\mathbb{Z}$ by \cite[Theorem 1.1]{PT0}. 
Let $\mathbb{S}^{\text{gr}}_T(d)_w$ be the graded category of matrix factorizations, defined similarly to $\mathbb{S}_T(d)_w$, where the linear map corresponding to $X$ is scaled by $2$.
Then $\mathbb{S}^{\text{gr}}_T(d)_w$ and $\mathbb{S}_T(d)_w$ have the same Grothendieck group \cite[Corollary~3.13]{T4}. 
Further, $\mathbb{S}^{\text{gr}}_T(d)_w$
appears as summands in $D^b_T(\mathscr{C}(d))_w$ \cite[Corollary~3.3]{P2}, where $\mathscr{C}(d)$ is the (derived) stack of pairs of commuting matrices of size $d$. Consider the analogous graded category 
$\mathcal{PT}^{\rm{red}, \rm{gr}}_{\mathbb{C}^3, T}(m, d)$ with the same Grothendieck group as $\mathcal{PT}^{\rm{red}}_{\mathbb{C}^3, T}(m, d)$.
By \cite[Theorem 2.10]{halp} and the Koszul equivalence, see for example \cite[Theorem 2.3.3]{T},
the category $\mathcal{PT}^{\rm{red}, \rm{gr}}_{\mathbb{C}^3, T}(m, d)$ is admissible in $D^b_T(	\mathfrak{T}_{\mathbb{C}^2}^{\rm{red}}(m, d))$.
By the argument in the proof of Corollary \ref{thm12}, it suffices to check the Künneth isomorphism 
\begin{equation}\label{kunnethstacks}
G_T\left(\mathscr{D}\times\mathfrak{T}_{\mathbb{C}^2}^{\rm{red}}(m, d')\right)_\mathbb{F}\cong
    G_T\left(\mathscr{D}\right)_\mathbb{F}\otimes_\mathbb{F} 
    G_T\left(\mathfrak{T}_{\mathbb{C}^2}^{\rm{red}}(m, d')\right)_\mathbb{F},
\end{equation} where $\mathscr{D}$ is the product of finitely many commuting stacks $\mathscr{C}(e)$ for various $e$.
The statement follows from the localization theorem in K-theory (see \cite{MR1297442}) with respect to $T$ because $\mathscr{D}^T=\text{pt}/L$, where $L$ is a product of finitely many groups $GL(e)$. 
\end{proof}

	\bibliographystyle{amsalpha}
\bibliography{math}

\textsc{\small Tudor P\u adurariu: Columbia University, 
Mathematics Hall, 2990 Broadway, New York, NY 10027, USA.}\\
\textit{\small E-mail address:} \texttt{\small tgp2109@columbia.edu}\\

\textsc{\small Yukinobu Toda: Kavli Institute for the Physics and Mathematics of the Universe (WPI), University of Tokyo, 5-1-5 Kashiwanoha, Kashiwa, 277-8583, Japan.}\\
\textit{\small E-mail address:} \texttt{\small yukinobu.toda@ipmu.jp}\\

\end{document}